\documentclass[12pt]{article}
\usepackage{amsmath,amsthm,amsfonts,amssymb,times,xcolor,hyperref}
\usepackage{enumitem,cleveref}

\font\smallit=cmti10

\newtheorem{theorem}{Theorem}[section]
\newtheorem{lemma}[theorem]{Lemma}
\newtheorem{corollary}[theorem]{Corollary}

\theoremstyle{definition}
\newtheorem{example}[theorem]{Example}
\newtheorem{comment}[theorem]{Comment}

\newtheorem{conjecture}[theorem]{Conjecture}
\newtheorem*{notation}{Notation}

\newcommand{\ediv}{\mathrel{\|}}
\DeclareMathOperator{\real}{Re}
\newcommand{\mmod}{\ \text{mod}\ }

\newcommand\blfootnote[1]{%
  \begingroup
  \renewcommand\thefootnote{}\footnote{#1}%
  \addtocounter{footnote}{-1}%
  \endgroup
}

\allowdisplaybreaks

\begin{document}

\begin{center}
\textbf{\large On the factorization of 
\boldmath $x^{d_{r}} + x^{d_{r-1}} + \cdots + x^{d_{2}} + x^{d_{1}} + p$ \unboldmath} \\[4pt]  
\textbf{\large and a conjecture of C.~Nicol}
\vskip 20pt
{Michael Filaseta}\\
{\smallit Dept.~Mathematics, 
University of South Carolina, 
Columbia, SC, 29208}\\
{\tt filaseta@math.sc.edu}\\ 
\vskip 10pt
{Lilit Martirosyan}\\
{\smallit Dept.~Mathematics and Stat., 
University of North Carolina Wilmington, 
Wilmington, NC 28403--5970}\\
{\tt martirosyanl@uncw.edu}\\ 
\vskip 10pt
{London Cameron Swan}\\
{\smallit Dept.~Mathematics and Stat., 
University of North Carolina Wilmington, 
Wilmington, NC 28403--5970}\\
{\tt lcs2908@uncw.edu}\\ 
\end{center}

\smallskip
\begin{abstract}
For an arbitrary prime $p$ and arbitrary positive integers  $r$ and $d_{1}, \ldots, d_{r}$ with $d_{r} > \cdots > d_{1}$, we show that $x^{d_{r}} + x^{d_{r-1}} + \cdots + x^{d_{2}} + x^{d_{1}} + p$ removed of all of its monic irreducible reciprocal factors is irreducible.  As a consequence, we establish a number of results related to a conjecture of Charles Nicol that, over the rationals, the sum of two cyclotomic polynomials $\Phi_{n}(x) + \Phi_{m}(x)$ is a product of cyclotomic polynomials times either $2$ or an irreducible polynomial, where $n$ and $m$ are integers exceeding $1$.  We also establish similar results for $\Phi_{n}(x)\Phi_{m}(x)+1$.  
\end{abstract}

\blfootnote{\text{2020 Mathematics Subject Classification}: Primary 11R09; Secondary 11C08, 12E05} 
\blfootnote{\text{Keywords:  irreducible polynomial, cyclotomic polynomial, reciprocal polynomial}}

\medskip
\section{Introduction}

The \textit{reciprocal} of a polynomial $w(x) \in \mathbb R[x]$ is the polynomial $\widetilde{w}(x) = x^{\deg w} w(1/x)$.  A  $w(x) \in \mathbb R[x]$ is a \textit{reciprocal polynomial} if $\widetilde{w}(x) = \pm w(x)$.  A polynomial  $w(x) \in \mathbb R[x]$ is reciprocal if and only if for every $\alpha \in \mathbb C$ that is a root of $w(x)$ to some multiplicity $k \ge 1$, we have $\alpha \ne 0$ and $1/\alpha$ is a root of $w(x)$ with multiplicity $k$.  For $u(x), v(x)$ and $w(x) \in \mathbb R[x]$, we note that 
\begin{itemize}
\item
if $f(x) = u(x)v(x)$, then $\tilde{f}(x) = \tilde{u}(x) \tilde{v}(x)$;
\item
$\deg \widetilde{w}(x) \le \deg w(x)$ with equality if and only if $w(0) \ne 0$; and
\item
if $w(0) \ne 0$, then the reciprocal of $\widetilde{w}(x)$ is $w(x)$.
\end{itemize}
The cyclotomic polynomials $\Phi_{n}(x)$ are examples of reciprocal polynomials. 

We will be interested in the factorization of polynomials in $\mathbb Z[x]$.  The non-reciprocal part of $w(x) \in \mathbb Z[x]$ is $w(x)$ removed of all of its irreducible reciprocal factors in $\mathbb Z[x]$ having positive leading coefficient.  For example, the non-reciprocal part of
\[
(1-x)^{3} (x^{3} + x + 1) (x^{2} + 3x + 1)
\]
is $-(x^{3} + x + 1)$ obtained by removing the factor $x-1$ three times and removing the factor $x^{2} + 3x + 1$.  The non-cyclotomic part of $w(x) \in \mathbb Z[x]$ is $w(x)$ removed of all of its cyclotomic factors.  The non-cyclotomic part of the polynomial in the example just given is $-(x^{3} + x + 1)(x^{2} + 3x + 1)$. 

Let 
\begin{equation}\label{basicFeq}
F(x) = x^{d_{r}} + x^{d_{r-1}} + \cdots + x^{d_{2}} + x^{d_{1}} + p,
\quad \text{with } r, d_{j} \in \mathbb Z^{+}, d_{r} > \cdots > d_{1}, \text{ and } p \text{ prime}.
\end{equation}
Since $F(x)$ is monic and $F(0) > 1$, the polynomial $F(x)$ is not reciprocal, and it follows that the non-reciprocal part of $F(x)$ has positive degree.  One of our goals is to show the following.  

\begin{theorem}\label{factorizationthm}
The non-reciprocal part of the polynomial $F(x)$ given in \eqref{basicFeq} is irreducible.
\end{theorem}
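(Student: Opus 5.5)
We use the classical Ljunggren-type argument based on the reciprocal polynomial, together with the fact that $F(0)=p$ is prime. Suppose the non-reciprocal part of $F(x)$ is reducible. Since $F(0)=p$, exactly one irreducible factor of $F$ has constant term $\pm p$. Every other irreducible factor has constant term $\pm 1$. Any reciprocal irreducible factor is monic up to sign, and reciprocal, so its constant term is $\pm 1$. Hence the factor with constant term $\pm p$ is non-reciprocal. If the non-reciprocal part were reducible, there would be another non-reciprocal irreducible factor $g(x)$ with $g(0)=\pm 1$. Because $F$ is monic, we may take this factor monic. So we can write $F(x)=u(x)v(x)$ with $u,v\in\mathbb Z[x]$ monic, where $u$ is non-reciprocal irreducible with $|u(0)|=1$, and $v(0)=\pm p$.

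Set $G(x)=u(x)\widetilde v(x)$. Then $G\widetilde G = u\widetilde u\, v\widetilde v = F\widetilde F$, and also $G\ne \pm F$ and $G\neq \pm\widetilde F$; the second inequality follows from $u\ne\pm\widetilde u$ and unique factorization.

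Next compare coefficients. Write $G(x)=\sum_{k} c_k x^k$, of degree $d_r$. Its leading coefficient is $\pm p$, since $\widetilde v$ has leading coefficient $v(0)=\pm p$, and its constant term is $u(0)\widetilde v(0)=\pm 1$. The product $F\widetilde F$ has constant coefficient $p$, and its middle coefficient (that of $x^{d_r}$) is $\sum c_k^2$. On the $F$ side this coefficient equals the sum of the squares of the coefficients of $F$, namely $r+p^2$. So
\begin{equation*}
\sum_k c_k^2 = r + p^2 .
\end{equation*}
Since $c_{d_r}^2=p^2$ and $c_0^2=1$, the remaining coefficients contribute $r-1$ to this sum of squares.

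The key step is to turn this into a contradiction. Consider the reversed polynomial $H(x)=\pm\widetilde G(x)$, normalised to have leading coefficient $1$ and constant term $\pm p$, and compare $H$ with $F$ itself. Reading the identity $F\widetilde F=H\widetilde H$ coefficient by coefficient from the top degree downward gives the coefficients of $x^{2d_r-j}$. These coefficients are $p\cdot(\text{coefficient of }x^{d_r-j}\text{ in }F)$ plus a correction involving products of pairs of $0,1$ coefficients. I would try to show inductively that $H$ agrees with $F$.

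The modular tool for this is to reduce mod $p$. Modulo $p$, $F\equiv x^{d_1}f_0(x)$ where the coefficients of $f_0$ lie in $\{0,1\}$, and $\widetilde F\equiv \widetilde{f_0}$ up to a power of $x$. The relation $G\widetilde G\equiv F\widetilde F \pmod p$, together with $u\mid F$, should force strong constraints. Alternatively, the constraints can come from the sum-of-squares count over the integers: the nonzero coefficients $c_k$ with $0<k<d_r$ have squares summing to $r-1$. Working with the coefficient of $x^{d_r+j}$ in $G\widetilde G$, which equals $\sum_k c_kc_{k+j}$, and matching it with the corresponding autocorrelation of $F$'s coefficient vector $(p,0,\dots,1,\dots,1)$, one wants to show the coefficient vector of $G$ is a permutation-like rearrangement that forces $G=\pm F$ or $G=\pm\widetilde F$.

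The main obstacle I expect is exactly this last rigidity step. Equal autocorrelations do not in general force equality of sequences up to reversal, so the argument must use the special shape: the single large coefficient $p$ at one end, and the small coefficients elsewhere. I would first try the autocorrelation shifts $j=d_r-d_k$. These pair the coefficient $p$ against the $1$'s of $F$, so the correlation of $F$ at shift $d_r-d_k$ is $p$ plus a small nonnegative integer. For $G$, the large term $\pm p\, c_{d_k}$ must reproduce this. Bounding the other terms by the sum-of-squares bound $r-1$ (via Cauchy–Schwarz), and handling small $p$ or small $r$ separately if the inequality is not strict, would force $c_{d_k}=\pm1$ at exactly the positions $d_k$ with consistent signs. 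This identifies $G$ with $\pm\widetilde F$, a contradiction.
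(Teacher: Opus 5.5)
There is a genuine gap: the decisive step is never carried out, and the route you commit to for it fails in the main case. The setup is sound. Taking $u$ irreducible and non-reciprocal with $|u(0)|=1$ makes $v$ (monic, $v(0)=\pm p$) automatically non-reciprocal, so you bypass the Filaseta--Solan lemma. The identity $\sum_k c_k^2=r+p^2$ is also correct. But the ``rigidity step'' is only something you would try. At a shift involving the large coefficient of $G$, matching autocorrelations gives $\pm p\cdot(\text{unknown coefficient})=p\cdot(\text{target})+N_F-N_G$. Cauchy--Schwarz bounds $|N_F-N_G|$ only by roughly $2r$, so the unknown coefficient is forced only when $p$ is large compared with $r$. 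For $p=2$, the case used throughout the paper, this already fails for $r\ge 2$, and ``handling small $p$ separately'' is the whole difficulty. There is also an indexing slip: $G$ carries $\pm p$ in its top coefficient, so the shifts should be $d_k$ and the positions to pin down are $d_r-d_k$, with target $G=\widetilde F$.

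Two concrete ingredients are missing. First, you never use the linear relation $G(1)=u(1)\widetilde v(1)=u(1)v(1)=F(1)=r+p$. With $c_0=\pm1$ and $c_{d_r}=\pm p$ it gives $\sum_{0<k<d_r}c_k\ge r-1=\sum_{0<k<d_r}c_k^2$. Since $c_k\le c_k^2$ for integers, this forces $c_0=1$, $c_{d_r}=p$, and every middle coefficient in $\{0,1\}$. Without it you only know the middle coefficients' squares sum to $r-1$; they could be $-1$ or $2$. Your top-down idea silently assumed $0/1$ coefficients for $H$, and that is precisely what this step supplies.

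Second, once $H=\widetilde G$ is monic with constant term $p$ and $0/1$ middle coefficients (the paper's $W$ of Lemma~\ref{Winfo}), your first idea closes the proof. Suppose $h_i=f_i$ and $h_{d_r-i}=f_{d_r-i}$ for all $1\le i<j\le d_r/2$. Then the coefficient of $x^{2d_r-j}$ in $H\widetilde H=F\widetilde F$ gives $p\,(h_{d_r-j}-f_{d_r-j})=f_j-h_j\in\{-1,0,1\}$. Since $p\ge 2$, both differences vanish. Hence $H=F$, i.e.\ $G=\widetilde F$, contradicting $u\ne\pm\widetilde u$. The paper compresses this last step into Borisov's identity $F(\widetilde W-\widetilde F)=(F-W)\widetilde W$, whose lowest-degree coefficients are $\pm p$ on the left and $\pm 1$ on the right.
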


After establishing this theorem, we will turn to results associated with the following.

\begin{conjecture}[Charles Nicol \cite{nicol}]\label{nicolconj}
If $n$ and $m$ are integers exceeding $1$, then the non-cyclotomic part of the polynomial $\Phi_{n}(x) + \Phi_{m}(x)$ is either $2$ or irreducible in $\mathbb Q[x]$.
\end{conjecture}

In the above conjecture, irreducibility is in $\mathbb Q[x]$ rather than in $\mathbb Z[x]$ as otherwise exceptions occur.  For example, the polynomial $\Phi_{42}(x) + \Phi_{21}(x)$ has the two irreducible non-cyclotomic factors $2$ and $x^{12}-x^{8}+x^{6}-x^{4}+1$ in $\mathbb Z[x]$.  Also, the examples $\Phi_{2}(x) + \Phi_{2}(x)$ and $\Phi_{3}(x) + \Phi_{6}(x)$ show that the non-cyclotomic part of $\Phi_{n}(x) + \Phi_{m}(x)$ can be $2$.
On the other hand, the non-cyclotomic part of $\Phi_{n}(x) + \Phi_{m}(x)$ can be $2$ only if $\phi(n) = \phi(m)$ since otherwise the leading coefficient of $\Phi_{n}(x) + \Phi_{m}(x)$ is $1$.  

Computations were done by the authors verifying that the non-cyclotomic part of the polynomial $\Phi_{n}(x) + \Phi_{m}(x)$ is either $2$ or irreducible in $\mathbb Q[x]$ for all $(n,m)$ with $2 \le n \le 500$ and $2 \le m \le 500$.  As the number of prime divisors of $n$ and $m$ and the gaps between them can alter the structure of $\Phi_{n}(x)$ and $\Phi_{m}(x)$ considerably, we view this as only moderate evidence for the conjecture.

We illustrate here the connection between Theorem~\ref{factorizationthm} and this conjecture.  For positive integers $n$ and $m$ with $\phi(n) \ne \phi(m)$, we will see in Lemma~\ref{recipfactf(x)} that every irreducible reciprocal factor of $\Phi_{n}(x) + \Phi_{m}(x)$ with positive leading coefficient is cyclotomic.  Thus, the non-reciprocal and non-cyclotomic parts of $\Phi_{n}(x) + \Phi_{m}(x)$, for $\phi(n) \ne \phi(m)$, are the same.  
For primes $p$ (different from $p$ in \eqref{basicFeq}), $q$ and $\ell$ satisfying $p > q > \ell$, one can show that (see \eqref{factorphik:eq1})
\begin{equation}\label{introeq:ellpqeq}
\big( \Phi_{\ell p}(x) + \Phi_{\ell q}(x) \big) \Phi_{\ell}(x)
= \Phi_{\ell}(x^{p}) + \Phi_{\ell}(x^{q}) 
= \sum_{i=1}^{\ell-1} x^{p i} + \sum_{j=1}^{\ell-1} x^{q j} + 2.
\end{equation}
The condition $p > q > \ell$ on the primes $p$, $q$ and $\ell$ imply that the exponents on the powers of $x$ above are distinct so that $\Phi_{\ell}(x^{p}) + \Phi_{\ell}(x^{q})$ is of the form given for $F(x)$ in \eqref{basicFeq}.   Therefore, Theorem~\ref{factorizationthm} implies that the non-reciprocal part of $\Phi_{\ell p}(x) + \Phi_{\ell q}(x)$ is irreducible.  Hence, in the case that $n = \ell p$ and $m = \ell q$ with primes $p > q > \ell$, the conclusion of Conjecture~\ref{nicolconj} follows.

One goal of ours is to generalize this example to polynomials of the form $\Phi_{\ell^{t} p}(x) + \Phi_{\ell^{t} q}(x)$ and to completely characterize the cyclotomic factors that can occur in this case.  More precisely, we will show the following.

\begin{theorem}\label{kpq:mainthm}
Let $p$, $q$ and $\ell$ be primes such that $p > q > \ell$, and let $k = \ell^{t}$ for some integer $t \ge 0$.  Then the non-cyclotomic part of $\Phi_{k p}(x) + \Phi_{k q}(x)$ is irreducible.  Furthermore, let  $D$ be a positive integer, and define $d$ and $j$ by $D = d \ell^{j}$, where $j \ge 0$ is an integer, $d \in \mathbb Z^{+}$ and $\ell \nmid d$.  Then $\Phi_{D}(x)$ divides $\Phi_{k p}(x) + \Phi_{k q}(x)$ if and only if $j \le t-1$ and one of the following hold.
\begin{enumerate}[label=(\roman*)]
\item
There is a positive integer $s$ such that $2^{s} \ediv (\ell-1)$, $d = 2^{s+1} d_{o}$ for some odd divisor $d_{o}$ of $\ell-1$, and $q \equiv -p \pmod{d}$. \label{mainthmfirstitem}
\item
The prime $\ell$ satisfies $\ell \equiv 3 \pmod{4}$, $d$ is a multiple of $4$ that divides $\ell+1$, and $q \equiv p + (d/2) \pmod{d}$. \label{mainthmseconditem}
\end{enumerate}
\end{theorem}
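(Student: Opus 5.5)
The plan is to reduce to Theorem~\ref{factorizationthm} via an identity like \eqref{introeq:ellpqeq}, and then to pin down the cyclotomic divisors by a root-of-unity computation. Since $\gcd(k,p)=\gcd(k,q)=1$, we have $\Phi_{kp}(x)=\Phi_k(x^p)/\Phi_k(x)$ and likewise for $q$. Hence
\[
\big(\Phi_{kp}(x)+\Phi_{kq}(x)\big)\Phi_k(x)=\Phi_k(x^p)+\Phi_k(x^q),
\]
and for $k=\ell^t$ with $t\ge 1$,
\[
\Phi_k(x^p)=\sum_{i=0}^{\ell-1}x^{p\ell^{t-1}i}.
\]
The right side is therefore $\sum_{i=1}^{\ell-1}\big(x^{p\ell^{t-1}i}+x^{q\ell^{t-1}i}\big)+2$. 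The exponents $p\ell^{t-1}i$ and $q\ell^{t-1}j$ with $1\le i,j\le \ell-1$ are distinct: an equality $pi=qj$ would force $q\mid i$, which is impossible since $i<\ell<q$. So this polynomial has the shape \eqref{basicFeq} with prime $2$, and its non-reciprocal part is irreducible by Theorem~\ref{factorizationthm}. For $t=0$ the polynomial is $x^{p-1}+\dots+x^{q}+2(x^{q-1}+\dots+1)$, which is not directly of that shape. I would treat it separately, for instance by multiplying by $(x-1)$ to get $x^p+x^q-2$, or by handling $k=1$ with a small variant of the same argument. I expect this case to be easy or vacuous, since conditions (i) and (ii) require $j\le t-1$.

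To pass from non-reciprocal to non-cyclotomic, I would use Lemma~\ref{recipfactf(x)}: since $\phi(kp)\ne\phi(kq)$, every irreducible reciprocal factor with positive leading coefficient is cyclotomic. The two parts therefore coincide, and irreducibility follows once we divide out the factor $\Phi_k(x)$.

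For the characterization, let $\zeta$ be a primitive $D$-th root of unity. Then $\Phi_D$ divides the sum exactly when $\Phi_{kp}(\zeta)+\Phi_{kq}(\zeta)=0$. Writing $y=\zeta^{p\ell^{t-1}}$ and $z=\zeta^{q\ell^{t-1}}$, the identity turns the condition into $\Phi_\ell(y)+\Phi_\ell(z)=0$, provided $\Phi_k(\zeta)\ne0$, i.e.\ $D\ne k$; the case $D=k$ is handled separately by looking at multiplicities. If $y=1$ then $\Phi_\ell(y)=\ell$, and since $|\Phi_\ell(z)|\le \ell$ with equality only when $z=1$, the sum cannot vanish. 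So $y\ne1$ and $z\ne1$, and the condition becomes
\[
\frac{y^\ell-1}{y-1}=-\frac{z^\ell-1}{z-1}.
\]
If $\ell^t\mid D$ then $y^\ell=z^\ell=1$ and both sides are $0$ or $\ell$. This must be analyzed carefully, and I expect it to yield the restriction $j\le t-1$; the zero case is excluded because $\Phi_D$ divides $\Phi_k(x^p)$ only in the expected ways. Assuming $j\le t-1$, the element $y$ has order $d'=d\ell^{\max(0,j-t+1)}$ suitably, but effectively $y$ and $z$ are primitive $d\ell^{\,\cdot}$ roots with $z=y^{q/p}$.

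The main obstacle is solving the equation $\Phi_\ell(y)=-\Phi_\ell(z)$ for roots of unity $y,z$ of the same order. I would rewrite it as a vanishing sum of $2\ell$ roots of unity,
\[
y^\ell(z-1)-(y-1)+z^\ell(y-1)-(z-1)=0,
\]
which simplifies to $y^\ell z-y^\ell+z^\ell y-z^\ell-y-z+2=0$. I would then classify solutions using the theory of minimal vanishing sums of roots of unity (Conway--Jones or Mann's theorem), or more concretely by taking absolute values and using the geometry of $|\Phi_\ell(e^{i\theta})|=|\sin(\ell\theta/2)/\sin(\theta/2)|$ with the argument $e^{i(\ell-1)\theta/2}$. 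Matching moduli and arguments should give two families. In the first, $z=\bar y$ up to a sign twist; this leads to $q\equiv -p \pmod d$ together with the $2$-adic condition that makes the arguments differ by $\pi$, which is case (i). In the second, $z=-y$-type solutions require $\ell\equiv3\pmod4$ and $d\mid \ell+1$, which is case (ii). Each family would then be verified to actually satisfy the equation. The arithmetic conditions on $d$ come from requiring the phase factors $e^{i(\ell-1)\theta/2}$ to differ by exactly $-1$.
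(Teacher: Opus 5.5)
Your $t\ge 1$ reduction to Theorem~\ref{factorizationthm}, and your use of Lemma~\ref{recipfactf(x)} to identify the non-reciprocal part with the non-cyclotomic part, match the paper. The two families you name ($z=y^{-1}$ and $z=-y$) are exactly the ones the paper verifies for sufficiency.

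The genuine gap is the necessity half of the classification, which is the heart of the theorem. ``Matching moduli and arguments should give two families'' is a hope, not an argument. Nothing you write rules out other solutions of $\Phi_\ell(\zeta^{a})+\Phi_\ell(\zeta)=0$; after a Galois conjugation one may take $z=\zeta$ of order $d$ and $y=\zeta^{a}$ with $a\equiv pq^{-1}\pmod d$. The modulus equation $|\sin(\pi\ell a/d)/\sin(\pi a/d)|=|\sin(\pi\ell/d)/\sin(\pi/d)|$ is not solvable by inspection. Citing Mann or Conway--Jones for your eight-term vanishing sum only points to a long case analysis that you do not carry out.

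The missing idea is extremality. Reduce exponents modulo $d$ and set $\ell_0=\ell \bmod d\in[1,d)$. Each side then becomes a sum of $\ell_0$ \emph{distinct} $d$-th roots of unity, and $\sum_{v<\ell_0}\zeta^{v}$ is a block of consecutive ones. The paper (Lemma~\ref{dbound}) rotates this block so its real part is maximal among all such sums. This forces $d$ even and the set identity $\{pq^{-1}u \bmod d\}=\{v+d/2 \bmod d\}$. A residue-counting argument then leaves only $\ell_0=d/2+1$ with $pq^{-1}\equiv -1$ (case (i)) and $\ell_0=d-1$ (case (ii)).

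Your modulus idea can be completed in the same spirit. The value $\sin(\pi\ell_0/d)/\sin(\pi/d)$ is the largest modulus of any sum of $\ell_0$ distinct $d$-th roots of unity, and it is attained only by blocks of consecutive residues. So $\{au \bmod d: 0\le u<\ell_0\}$ must be such a block. That forces $a\equiv \pm1\pmod d$ or $\ell_0\in\{1,d-1\}$, and these four cases give (i) and (ii) in a few lines. Some argument of this kind has to be supplied.

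Several auxiliary steps are also wrong or missing.

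First, $\ell^{t}\mid D$ does not give $y^{\ell}=z^{\ell}=1$. One has $y^{\ell}=\zeta^{p\ell^{t}}=1$ only if $D\mid p\ell^{t}$. Lemma~\ref{recipfactf(x)} gives $D\mid \ell^{t-1}(\ell-1)(p-q)$, hence $\gcd(D,pq)=1$, which you also need to write $z=y^{q/p}$. So this happens only for $D=k$. The correct treatment of $j\ge t$ notes that $\zeta^{\ell^{t-1}}$ is a primitive $d\ell^{j-t+1}$-th root of unity that is a root of $\Phi_{\ell p}(x)+\Phi_{\ell q}(x)$. The $t=1$ classification must therefore be proved for all orders, including multiples of $\ell$, and it then shows the order is prime to $\ell$.

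Second, $D=k$ is precisely the case where your identity gives no information, and ``looking at multiplicities'' needs content. The paper (Lemma~\ref{factorphik}) uses Lemma~\ref{recipfactf(x)} to get $\ell\mid p-q$. It then shows $F'(\zeta_k)=(p\zeta_\ell^{p}+q\zeta_\ell^{q})\ell^{t-1}\zeta_k^{-1}\Phi_\ell'(\zeta_\ell^{p})\ne 0$, using $p\ne q$ and Gauss--Lucas.

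Third, $t=0$ is not vacuous: the theorem then asserts that $\Phi_p(x)+\Phi_q(x)$ has no cyclotomic factor, and this needs proof. For $D>1$, $D\mid p-q$ gives $\zeta^{p}=\zeta^{q}$, so the sum is $2(\zeta^{p}-1)/(\zeta-1)\ne 0$ since $D\nmid p$. Moreover, $x^{p}+x^{q}-2$ is not of the form \eqref{basicFeq}. The proof of Theorem~\ref{factorizationthm} uses that $F$ has no positive real root, whereas this polynomial vanishes at $1$. Substitute $x\mapsto -x$ instead, as the paper does, to get $(\Phi_p(-x)+\Phi_q(-x))(x+1)=x^{p}+x^{q}+2$.
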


Based on the above theorem, we will see that the density of primes $p$, $q$ and $\ell$ satisfying $\ell < q < p \le X$ for which $\Phi_{\ell p}(x) + \Phi_{\ell q}(x)$ is irreducible tends to $7/12$ as $X \rightarrow \infty$ (see Corollary~\ref{seventwelfthcor}).  Thus, more often than not these polynomials do not have cyclotomic factors.

We will also address further results that come from finding multiples of $\Phi_{n}(x) +\Phi_{m}(x)$ that are of the form \eqref{basicFeq}.  We will see that this approach is also connected to the study of the factorization of the polynomials $\Phi_{n}(x)\,\Phi_{m}(x) + 1$.  This will allow us to address similar results for these polynomials such as the following.

\begin{theorem}\label{21case:thm}
If $p_{1}, p_{2}$ and $q_{1}$ are distinct odd primes and $e$ is a nonnegative integer, then the non-cyclotomic part of $\Phi_{2^{e}p_{1}p_{2}}(x) \,\Phi_{2^{e}q_{1}}(x) + 1$ is irreducible.
\end{theorem}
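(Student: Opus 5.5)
The plan is to multiply $\Phi_{2^{e}p_{1}p_{2}}(x)\,\Phi_{2^{e}q_{1}}(x)+1$ by a suitable cyclotomic product so that the result has the shape \eqref{basicFeq} with $p=2$, and then invoke Theorem~\ref{factorizationthm}. Multiplying by cyclotomic polynomials changes only the cyclotomic part, so irreducibility of the non-reciprocal part of the multiple gives irreducibility of the non-reciprocal part of the original. It then remains to check that every irreducible reciprocal factor of $\Phi_{2^{e}p_{1}p_{2}}(x)\Phi_{2^{e}q_{1}}(x)+1$ is cyclotomic; for that I would use Lemma~\ref{recipfactf(x)} or an analogue of it.

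First reduce to $e\in\{0,1\}$. Write $N=2^{e}$ and use $\Phi_{2^{e}m}(x)=\Phi_{2m}(x^{2^{e-1}})$ for $e\ge1$ and odd $m$. Then the polynomial is $G(x^{2^{e-1}})$, where $G$ is the $e=1$ polynomial. If the substituted multiple still has the form \eqref{basicFeq}, the theorem applies directly, since substitution $x\mapsto x^{M}$ keeps exponents distinct. So it suffices to construct the multiple for $e=0$ and $e=1$; the case $e=1$ follows from $e=0$ via $\Phi_{2m}(x)=\pm\Phi_m(-x)$, which preserves the sign pattern up to an overall sign.

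For $e=0$, use
\[
\Phi_{p_1p_2}(x)=\frac{(x^{p_1p_2}-1)(x-1)}{(x^{p_1}-1)(x^{p_2}-1)},\qquad \Phi_{q_1}(x)=\frac{x^{q_1}-1}{x-1}.
\]
Multiply by $(x^{p_1}-1)(x^{p_2}-1)$, or better by $\Phi_{p_1}(x)\Phi_{p_2}(x)\Phi_{1}(x)$. This gives
\[
\Phi_{p_1p_2}(x)\Phi_{q_1}(x)\Phi_{p_1}(x)\Phi_{p_2}(x)(x-1)=(x^{p_1p_2}-1)\,\Phi_{q_1}(x).
\]
Hence
\[
(\Phi_{p_1p_2}\Phi_{q_1}+1)\,\Phi_{p_1}(x)\Phi_{p_2}(x)=\frac{(x^{p_1p_2}-1)\Phi_{q_1}(x)}{x-1}+\Phi_{p_1}(x)\Phi_{p_2}(x).
\]
The first term equals $\Phi_{q_1}(x)\sum_{i<p_1p_2}x^{i}$, which unfortunately has large coefficients. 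So I would instead aim for a multiple with $0/1$ coefficients and constant term $2$. The natural choice is $\Phi_{p_1}(x^{p_2})\Phi_{q_1}(x)+\Phi_{p_1}(x)\Phi_{p_2}(x)$-type combinations, or multiplying through by $(x-1)$ factors so that the product becomes $\frac{(x^{p_1p_2}-1)(x^{q_1}-1)}{\cdots}$, whose expansion is a difference of monomials. Then I would add the corresponding cyclotomic multiple of $1$.

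The main obstacle is finding the right cyclotomic multiplier so that the combined expansion has all nonconstant coefficients in $\{0,1\}$, with no cancellation to $-1$ and no collisions of exponents, and with constant term exactly $2$. I would try multiplying by $\Phi_{p_1}(x^{q_1})$-type factors, or by $\prod$ over $d\mid p_1p_2q_1$, exploiting that $\Phi_{p_1p_2}(x)\Phi_{p_1}(x)\Phi_{p_2}(x)\Phi_1(x)=x^{p_1p_2}-1$ combined with geometric sums. I would then use distinctness of the primes, via CRT on residues of exponents, to argue that exponents do not collide. If collisions are unavoidable, fall back on the proof technique of Theorem~\ref{factorizationthm} itself: Ljunggren-style comparison of $F(x)\widetilde F(x)$ coefficients. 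That technique only needs the sum of squares of coefficients to be small relative to the constant term, which is robust to small coefficient changes. Finally, verify that any reciprocal factor is cyclotomic: a reciprocal non-cyclotomic factor would have a root $\alpha$ with $|\alpha|>1$ and $1/\alpha$ also a root, which would contradict a size comparison from the equation $\Phi_{n}(\alpha)\Phi_m(\alpha)=-1=\Phi_n(1/\alpha)\Phi_m(1/\alpha)$ together with $\Phi_k(1/\alpha)=\alpha^{-\phi(k)}\Phi_k(\alpha)$. This forces $\alpha^{\phi(n)+\phi(m)}=1$, so $\alpha$ is a root of unity.
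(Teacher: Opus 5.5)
\textbf{There is a genuine gap: you never produce the multiplier.} Your overall framework is the paper's. You multiply by a cyclotomic product to reach the shape \eqref{basicFeq} with $p=2$ and apply Theorem~\ref{factorizationthm}. You show reciprocal factors are cyclotomic via $\alpha^{\phi(n)+\phi(m)}=1$, which is Lemma~\ref{recipfactphinphimplus1}. You handle $e\ge 2$ by $x\mapsto x^{2^{e-1}}$, as in Theorem~\ref{fnm:thm}.

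The step that carries the whole proof is an explicit multiplier, together with a check that the product really has the form \eqref{basicFeq}. You leave this as an unresolved ``main obstacle.'' The one concrete claim you make about it is false: $x\mapsto -x$ does not preserve sign patterns. It flips the sign of every odd-degree term, and that flip is exactly the missing idea.

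At $e=0$ the factors $x^{d}-1$ force coefficients $-1$. You should instead work at $e=1$, where
\[
\Phi_{2p_1p_2}(x)\,\Phi_{2q_1}(x)=\Phi_{p_1p_2}(-x)\,\Phi_{q_1}(-x)=\frac{(x^{p_1p_2}+1)(x+1)}{(x^{p_1}+1)(x^{p_2}+1)}\cdot\frac{x^{q_1}+1}{x+1}.
\]
Multiplying $\Phi_{2p_1p_2}(x)\Phi_{2q_1}(x)+1$ by $(x^{p_1}+1)(x^{p_2}+1)=\Phi_{2}(x)^{2}\,\Phi_{2p_1}(x)\,\Phi_{2p_2}(x)$ then gives
\[
(x^{p_1p_2}+1)(x^{q_1}+1)+(x^{p_1}+1)(x^{p_2}+1).
\]
This is the paper's $F_{n',m'}(x)$ from \eqref{Fnm:eq} (Lemma~\ref{vnmFnm:lem2}).

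You were one step away. Multiplying your $e=0$ polynomial by $(x^{p_1}-1)(x^{p_2}-1)$, rather than by $\Phi_{p_1}(x)\Phi_{p_2}(x)$, gives $(x^{p_1p_2}-1)(x^{q_1}-1)+(x^{p_1}-1)(x^{p_2}-1)$. Its $-1$ coefficients sit exactly on the odd-degree terms, so $x\mapsto -x$ turns it into the display above.

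The direction of your reduction also matters. The case $e\ge 2$ is $G(x^{2^{e-1}})$, where $G$ is the $e=1$ polynomial. Since $x\mapsto x^{M}$ preserves the form \eqref{basicFeq} but $x\mapsto -x$ does not, the multiple must be built for $G$ itself. The case $e=0$ then follows because $x\mapsto -x$ preserves irreducibility of the non-cyclotomic part.

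Once the multiplier is in hand, the remaining check is elementary and needs no CRT. The six exponents $p_1$, $p_2$, $q_1$, $p_1p_2$, $p_1+p_2$, $p_1p_2+q_1$ are distinct. The four odd ones are three distinct primes and the product $p_1p_2$. The two even ones satisfy $p_1+p_2<p_1p_2<p_1p_2+q_1$.

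Your fallback for collisions would not work. The proof of Theorem~\ref{factorizationthm} is not robust to changing coefficients. Lemma~\ref{Winfo} uses $\sum w_j=\sum w_j^{2}$, which relies on every nonconstant coefficient of $F$ being exactly $1$. Borisov's final step needs the lowest nonzero coefficient of $F-W$ to be $\pm1$; with $p=2$ and $\pm1$ coefficients it could be $\pm 2$, and the contradiction disappears.
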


With that in mind, it is reasonable (or as reasonable as with Conjecture~\ref{nicolconj}) to make the following conjecture.

\begin{conjecture}\label{newconj}
If $n$ and $m$ are integers exceeding $1$, then the non-cyclotomic part of the polynomial $\Phi_{n}(x)\,\Phi_{m}(x)+1$ is irreducible.
\end{conjecture}

Observe that, with $n$ and $m$ exceeding $1$, the polynomial $\Phi_{n}(x)\,\Phi_{m}(x)+1$ is monic with constant term $2$, so it will have an irreducible non-reciprocal factor in $\mathbb Q[x]$.  In the case of $\Phi_{n}(x) + \Phi_{m}(x)$ and $\phi(n) = \phi(m)$, the same was not true.
We note also that we verified computationally that the non-cyclotomic part of the polynomial $\Phi_{n}(x)\,\Phi_{m}(x)+1$ is irreducible for all $(n,m)$ with $2 \le n \le 500$ and $2 \le m \le 500$.

We turn now to a couple more general results, but first give some notation.

\begin{notation}
We use the standard big-oh notation.  Thus, the expression $O_{t,s}\big(G_{1}(X)\big)$ indicates a function $G_{2}(X)$ for which $|G_{2}(X)| \le C_{t,s} G_{1}(X)$, where $C_{t,s}$ is a positive constant depending only on $t$ and $s$ and where $X \ge X_{0}(t,s)$ for some constant $X_{0}(t,s)$ depending only on $t$ and $s$.  The notation $O\big(G_{1}(X)\big)$ has the same meaning but with the constant instead being an absolute constant.  We will also use Landau's notation $G_{2}(X) \ll G_{1}(X)$ to mean the same as $G_{2}(X) = O(G_{1}(X))$.  We will also make use of the little-oh notation $G_{2}(X) = o\big(G_{1}(X)\big)$ to mean that $G_{2}(X)/G_{1}(X)$ tends to $0$ as $X$ goes to infinity.  Finally, we make use of the standard notation $\omega(n)$ to denote the number of distinct prime divisors of a positive integer $n$.
\end{notation}

\begin{theorem}\label{arbitrarynumprimes:thm}
Fix positive integers $t$ and $s$.  
Then for almost all choices of $t+s$ distinct odd primes $p_{1}, \ldots, p_{t}, q_{1}, \ldots, q_{s}$, the numbers $n = 2^{e} p_{1} \cdots p_{t}$ and $m = 2^{e} q_{1} \cdots q_{s}$, where $e$ is an arbitrary nonnegative integer, 
satisfy the following.
\begin{enumerate}[label=(\roman*)]
\item
If $t \equiv s {\hskip -2pt}\pmod{2}$, then the non-cyclotomic part of $\Phi_{n}(x)+\Phi_{m}(x)$ is irreducible.  \label{arbitrarynumprimesfirstitem}
\item
If $t \not\equiv s {\hskip -2pt}\pmod{2}$, then the non-cyclotomic part of $\Phi_{n}(x)\,\Phi_{m}(x) + 1$ is irreducible.  \label{arbitrarynumprimesseconditem}
\end{enumerate} 
More precisely, the number of choices for $t+s$ distinct odd primes $p_{1}, \ldots, p_{t}, q_{1}, \ldots, q_{s}$ each at most $X$ such that there exists a nonnegative integer $e$ for which the applicable conclusion in \ref{arbitrarynumprimesfirstitem} or \ref{arbitrarynumprimesseconditem} fails for $n = 2^{e} p_{1} \cdots p_{t}$ and $m = 2^{e} q_{1} \cdots q_{s}$ is $O_{t,s}\big(\pi(X)^{t+s-1}\big) = O_{t,s}\big(X^{t+s-1}/(\log X)^{t+s-1}\big)$.
\end{theorem}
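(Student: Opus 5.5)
The plan is to imitate the argument for \eqref{introeq:ellpqeq}. We find an explicit product of cyclotomic polynomials $M(x)$ such that $M(x)\big(\Phi_{n}(x)+\Phi_{m}(x)\big)$ in case \ref{arbitrarynumprimesfirstitem}, or $M(x)\big(\Phi_{n}(x)\Phi_{m}(x)+1\big)$ in case \ref{arbitrarynumprimesseconditem}, has the shape \eqref{basicFeq} with $p=2$. We then apply Theorem~\ref{factorizationthm}.

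\textbf{Reduction in $e$.} We have $\Phi_{2^{e}N}(x)=\Phi_{2N}(x^{2^{e-1}})$ for $e\ge 1$ and $N$ odd, and $\Phi_{2N}(x)=\Phi_{N}(-x)$ for $N>1$ odd. Also, replacing $x$ by $x^{k}$ in a polynomial of the form \eqref{basicFeq} gives another polynomial of that form. So it suffices to handle $e=0$ and $e=1$, with the latter being the former under $x\mapsto -x$. The sign change must be tracked: under $x\mapsto -x$ the coefficients of odd exponents flip. This is one reason to aim for a representation where we control the exponents' parities, or to treat $e=1$ separately by the same method. In particular, the exceptional set of prime tuples does not depend on $e$.

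\textbf{The identity.} Let $P=p_{1}\cdots p_{t}$ and $Q=q_{1}\cdots q_{s}$. Write
\[
\Phi_{P}(x)=\prod_{d\mid P}(1-x^{d})^{\mu(P/d)}\cdot(\pm1)=A_{P}(x)/B_{P}(x),
\]
and similarly $\Phi_{Q}=A_{Q}/B_{Q}$. The factor $\pm1$ is determined by the parity of $t$, since $\sum_{d\mid P}\mu(P/d)$ vanishes but the signs of $x^{d}-1$ versus $1-x^{d}$ contribute $(-1)^{\#\{d:\mu(P/d)=1\}}$. Group these factors into quotients of the form $\Phi_{p_{i}}(x^{a})=(1-x^{pa})/(1-x^{a})$, as in the proof of \eqref{introeq:ellpqeq} and as for $\Phi_{p_1p_2}(x)\Phi_{p_1}(x)=\Phi_{p_1}(x^{p_2})$. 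This gives $\Phi_{P}(x)\,C_{P}(x)=U_{P}(x)$, where $C_{P}$ is a product of cyclotomic polynomials $\Phi_{d}$ with $d\mid P$, $d<P$, and $U_{P}$ is a product of polynomials $\Phi_{p_i}(x^{a})$ with nonnegative coefficients and constant term $1$. Taking $M=C_{P}C_{Q}$, we get
\[
M(\Phi_{P}+\Phi_{Q})=U_{P}C_{Q}+U_{Q}C_{P},\qquad M(\Phi_{P}\Phi_{Q}+1)=U_{P}U_{Q}+C_{P}C_{Q}.
\]
The parity hypothesis on $t,s$ is exactly what makes the constant terms agree, namely $1+1=2$, rather than cancel. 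I expect this to require choosing the grouping so that each side is a sum of monomials with coefficient $+1$ and constant term $2$. This is the step I expect to be the main obstacle: arranging the bookkeeping so that no negative coefficients survive for general $t,s$. If this fails, the fallback is to prove a variant of Theorem~\ref{factorizationthm} allowing coefficients $\pm1$, reusing the same argument.

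\textbf{Distinct exponents and counting.} Each monomial exponent in the expansion is an integer linear combination, with bounded coefficients depending on $t,s$, of products of subsets of the primes. Two monomials colliding, which would produce coefficients $2$ or cancellation, imposes a nontrivial polynomial relation among the primes. Such a relation can be solved for one prime in terms of the others in $O_{t,s}(1)$ ways. So the number of bad tuples with all primes at most $X$ is $O_{t,s}(\pi(X)^{t+s-1})$. A nontrivial relation arises whenever some $p_i$ appears with nonzero net coefficient; the trivial coincidences must be checked to be absent by the structure of $U$ and $C$.

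\textbf{Conclusion.} For the remaining tuples, Theorem~\ref{factorizationthm} applies to the product, so its non-reciprocal part is irreducible. Since $M$ is cyclotomic, the non-reciprocal part of $\Phi_{n}+\Phi_{m}$, respectively of $\Phi_{n}\Phi_{m}+1$, is irreducible. It remains to show that every irreducible reciprocal factor of these polynomials is cyclotomic, so that the non-reciprocal part equals the non-cyclotomic part. For $\phi(n)\ne\phi(m)$ this is Lemma~\ref{recipfactf(x)}, and that case holds off a further thin set of tuples. For $\Phi_{n}\Phi_{m}+1$ I would use the analogous statement proved with that lemma.
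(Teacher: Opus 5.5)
There is a genuine gap at exactly the step you flagged, and it is not a bookkeeping issue. The identity $M(\Phi_P+\Phi_Q)=U_PC_Q+U_QC_P$ (or $U_PU_Q+C_PC_Q$) essentially never has the shape \eqref{basicFeq}, because regrouping into factors $\Phi_{p_i}(x^{a})$ makes everything dense. Some examples:
\begin{itemize}
\item For $t=s=1$ you get $\Phi_p(x)+\Phi_q(x)=2+2x+\cdots$.
\item For $t=s=2$, with either grouping (so $C_P=\Phi_{p_i}(x)$), you get $\Phi_{p_1}(x^{p_2})\Phi_{q_1}(x)+\Phi_{q_1}(x^{q_2})\Phi_{p_1}(x)$ up to relabeling.
\item For $t=2$, $s=1$ in case (ii) you get $\Phi_{p_1}(x^{p_2})\Phi_{q_1}(x)+\Phi_{p_1}(x)$.
\end{itemize}
In each of these the coefficient of $x$ is $2$ for \emph{every} choice of primes, so the failure is not confined to a thin set. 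Your counting step breaks as well. Exponents $ia+j$ in products of $\Phi_{p_i}(x^{a})$ carry coefficients as large as the primes, not bounded in terms of $t,s$. At $e=0$ the sparse form carries signs intrinsically, e.g.\ $(1-x)\big(\Phi_p(x)+\Phi_q(x)\big)=2-x^{p}-x^{q}$.

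Your fallback, a $\pm1$ version of Theorem~\ref{factorizationthm} ``by the same argument'', also fails. Lemma~\ref{Winfo} deduces $w_j=1$ from $\sum w_j=\sum w_j^{2}$, and that equality holds only because the middle coefficients of $F$ are $0$ or $1$. Even granting $\pm1$ coefficients for $W$, differences of coefficients can be $\pm2$. Then comparing lowest coefficients in \eqref{borisovpoint} gives no contradiction when $p=2$.

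The missing idea is to keep the binomial factors and work at $-x$ (i.e.\ with $e=1$). You noted this as a reduction but did not exploit it. For odd squarefree $n'$ one has $\Phi_{n'}(-x)=\Phi_{2n'}(x)=\prod_{d\mid n'}(x^{d}+1)^{(-1)^{\omega(n')-\omega(d)}}$. Multiplying by the denominators, which are products of $x^{d}+1$ and hence of cyclotomic polynomials, gives $F_{n',m'}(x)$ as in Lemmas~\ref{vnmFnm:lem} and~\ref{vnmFnm:lem2}. This is a sum of two products of distinct binomials $x^{d}+1$, with nonnegative coefficients and constant term $2$; for instance $(x+1)\big(\Phi_p(-x)+\Phi_q(-x)\big)=x^{p}+x^{q}+2$. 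Its exponents are $0/1$ subset sums of divisors of $n'$ and $m'$.

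The parity of $t$ and $s$ does not control the constant term. It decides between $\Phi_n+\Phi_m$ and $\Phi_n\Phi_m+1$ so that the factor $x+1$ (from $d=1$) occurs once in each summand and can be cancelled. With the other choice, one summand contains $(x+1)^{2}$ and the coefficient of $x$ is again $2$.

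With this setup, $F_{n',m'}$ has shape \eqref{basicFeq} unless two distinct subset sums coincide. Each such coincidence is one of $O_{t,s}(1)$ nonzero multilinear equations in the primes. Writing such an equation as $B\,p'_j=A$ with $B\not\equiv0$ gives at most one $p'_j$ when $B\neq0$. The case $B=0$ needs a separate argument: the paper shows $B=0$ is itself an earlier equation in the list, and an induction on the number of variables also works. This yields $O_{t,s}(\pi(X)^{t+s-1})$.

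Your reduction in $e$ via $x\mapsto x^{2^{e-1}}$ is fine. So is the passage from the non-reciprocal to the non-cyclotomic part via Lemmas~\ref{recipfactf(x)} and~\ref{recipfactphinphimplus1}.
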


\begin{theorem}\label{maingeneralthm2}
Uniformly for $t$ and $s$ positive integers satisfying
\[
t+s = o\big( \sqrt{\log\log X} \,\big),
\]
the density of pairs of odd positive integer pairs $(n,m)$, with $n \le X$, $m \le X$, $nm$ squarefree, $\omega(n)= t$ and $\omega(m) = s$ such that the non-cyclotomic part of $\Phi_{n}(x)+\Phi_{m}(x)$ is irreducible if $t \equiv s \pmod{2}$ and the non-cyclotomic part of $\Phi_{n}(x)\Phi_{m}(x)+1$ is irreducible if $t \not\equiv s \pmod{2}$ is $1$ as $X \rightarrow \infty$.
\end{theorem}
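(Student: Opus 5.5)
Theorem~\ref{maingeneralthm2} is a density statement, so the plan is to reduce it to Theorem~\ref{arbitrarynumprimes:thm} with $e=0$, but with the error term made uniform in $t$ and $s$. The pairs in question are odd squarefree $n=p_1\cdots p_t$ and $m=q_1\cdots q_s$ with all primes distinct. For such $(n,m)$ I will build the same multiplier as in \eqref{introeq:ellpqeq}. Write $N=\Phi_n(x)\prod_{d\mid n,\,d<n}\Phi_d(x)^{\pm}$; by M\"obius inversion, $\Phi_n(x)=\prod_{d\mid n}(x^{d}-1)^{\mu(n/d)}$. Multiplying $\Phi_n+\Phi_m$ (when $t\equiv s \pmod 2$) or $\Phi_n\Phi_m+1$ (when $t\not\equiv s\pmod 2$) by a suitable product of cyclotomic polynomials should give a polynomial whose coefficients are all in $\{0,1\}$ except the constant term $2$. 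This is the form \eqref{basicFeq} with the prime $2$. The parity of $t,s$ is exactly what makes the signs $\mu$ line up so that the two pieces combine with a plus sign.

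First, I will argue that the exponents occurring in the two pieces are distinct whenever the primes avoid a sparse set of linear relations. For example, one needs $d\,i\neq d'\,j$ for the exponents arising from the divisors $d\mid n$, $d'\mid m$ and bounded multipliers $i,j$. Whenever such coincidences are absent, Theorem~\ref{factorizationthm} applies. Since the multiplier is cyclotomic and, by Lemma~\ref{recipfactf(x)}, all reciprocal factors of the target are cyclotomic (or the case $\phi(n)=\phi(m)$ is handled separately), the non-cyclotomic part is irreducible. This is the content of the proof of Theorem~\ref{arbitrarynumprimes:thm}. I would reuse it, tracking the bad set: it consists of the tuples satisfying one of at most $C^{t+s}$ (say $4^{t+s}$) linear relations of the form $a\prod_{i\in I}p_i=b\prod_{j\in J}q_j$ with small coefficients.

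Second, I will count the bad pairs $(n,m)$ with $n,m\le X$ uniformly. Each bad relation forces some product $\prod_{i\in I}p_i$ to be determined, up to a bounded factor, by the other primes. This saves essentially a factor of $X$ from one variable among the roughly $X^2/\log$-type count of pairs. By Landau, the count of $n\le X$ with $\omega(n)=t$ is $\pi_t(X)\sim \frac{X(\log\log X)^{t-1}}{(t-1)!\log X}$, uniformly for $t=o(\sqrt{\log\log X})$ (in fact in a much wider range). The bad count is then at most about $C^{t+s}\pi_t(X)\pi_s(X)\cdot (\log X)^{-1+\epsilon}$ times combinatorial factors like $t!\,s!$ coming from the choice of $I,J$. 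Dividing by $\pi_t(X)\pi_s(X)$, the proportion tends to $0$ provided $C^{t+s}t!s!$ is $(\log X)^{o(1)}$. The hypothesis $t+s=o(\sqrt{\log\log X})$ is well inside that range, since $(t+s)^{t+s}=\exp(o(\sqrt{\log\log X}\log\log\log X))$.

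The main obstacle is the uniform counting step. I need a clean upper bound for the number of squarefree $n\le X$ with $\omega(n)=t$ satisfying a divisibility-type constraint relative to $m$. I would use the Hardy--Ramanujan inequality $\pi_t(X)\ll \frac{X(\log\log X+c)^{t-1}}{(t-1)!\log X}$, which holds uniformly. I would also use the corresponding lower bound valid in the range $t\le (1-\epsilon)\log\log X$. To bound the bad set for a fixed relation, fix $m$ and the complementary primes of $n$. The remaining prime $p_i$ is then determined up to $O(1)$ choices, and summing the rest again gives a Hardy--Ramanujan-type bound with one prime fewer. This costs a factor of about $t\log X/(X\log\log X)$ relative to $\pi_t(X)$, which is ample.
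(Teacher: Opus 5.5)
Your reduction to the exceptional set of Theorem~\ref{fnm:thm} is the right starting point. However, your description of that set is wrong, and, more seriously, so is your counting.

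The multiple that has the form \eqref{basicFeq} is $F_{n,m}(x)=v_{n,m}(x)\big(\Phi_{n}(-x)+\Phi_{m}(-x)\big)$. The sign change matters: $(\Phi_{p}(x)+\Phi_{q}(x))(x-1)=x^{p}+x^{q}-2$. This $F_{n,m}(x)$ is a sum of two products of binomials $x^{d}+1$, so its exponents are subset sums of divisors of $n$ and $m$. The obstructions are therefore not $C^{t+s}$ multiplicative relations $a\prod_{i\in I}p_{i}=b\prod_{j\in J}q_{j}$. They are additive identities such as $q_{1}+q_{2}+q_{3}=q_{1}q_{2}$ or $q_{1}=p_{1}p_{2}p_{3}+p_{2}+p_{3}$, linear in each prime. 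There are doubly exponentially many of them: Lemma~\ref{maintsfnmthm} bounds them by $2^{2^{t}+2^{s}-1}+2^{2^{t}+1}+2^{2^{s}+1}$. Solving such an identity for one prime also requires its coefficient to be nonzero. This is why the paper orders the primes and adds the auxiliary sets $\mathcal E^{(1)}_{j}$, $\mathcal E^{(2)}_{j}$.

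Your claimed saving per relation fails. If fixing all other primes determines $p_{i}$, your argument gives a count of order $\pi_{t-1}(X)$. That is a saving of only $\pi_{t-1}(X)/\pi_{t}(X)\asymp t/\log\log X$, not a factor near $X$ or $(\log X)^{1-\varepsilon}$, because the determined prime may be small. Concretely, $3+7+11=3\cdot 7$. So for $s\ge 4$, every $m$ divisible by $231$ has $x^{21}$ in both halves of $F_{n,m}(x)$, and such $m$ form a proportion of order $s^{3}/(\log\log X)^{3}$ of the admissible $m$. With savings of only $t/\log\log X$, a union bound over even $4^{t+s}$ relations already fails well inside the range $t+s=o(\sqrt{\log\log X})$, let alone over $2^{2^{t+s}}$ relations.

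The missing idea is the paper's split:
\begin{enumerate}
\item Order the primes of $m$ (and of $n$) increasingly, and solve each relation for its largest prime $p'_{j}$.
\item Set $D=2^{2^{\delta\sqrt{\log\log X}+1}}$. When the cofactor satisfies $m'=m/p'_{j}\le X/D$, you gain a factor $D$, which beats the $2^{2^{t+s}}$ equations.
\item When $m'>X/D$, you have $p'_{1}<\cdots<p'_{j}\le D$. Such $m$ have relative density $\ll \delta\,4^{j}/j!$, since $\log\log D\ll\delta\sqrt{\log\log X}$ and $s\le\sqrt{\log\log X}$.
\end{enumerate}
This trade-off between the doubly exponential number of relations and the size of $\log\log D$ is where the $\sqrt{\log\log X}$ range comes from, not from the size of $(t+s)^{t+s}$.
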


Observe that if $t$ and $s$ are fixed positive integers, Theorem~\ref{maingeneralthm2} implies that for almost all positive squarefree relatively prime integers $n$ and $m$, with $\omega(n) = t$ and $\omega(m)= s$, the non-cyclotomic part of $\Phi_{n}(x)+\Phi_{m}(x)$ is irreducible if $t \equiv s \pmod{2}$ and the non-cyclotomic part of $\Phi_{n}(x)\Phi_{m}(x)+1$ is irreducible if $t \not\equiv s \pmod{2}$.

Elaborating on an earlier comment, we will see that the polynomials $\Phi_{n}(x) +\Phi_{m}(x)$ and $\Phi_{n}(x)\,\Phi_{m}(x)+1$ we investigate here will have no irreducible reciprocal factors occurring with multiplicity $> 1$ and all their irreducible reciprocal factors will be cyclotomic.  We note that this is not the case for general $F(x)$ of the form given in \eqref{basicFeq}, even in the case that $p = 2$ in \eqref{basicFeq} that we will take advantage of throughout the paper.  In the way of examples, we have
\[
x^{10}+x^7+x^6+x^5+x^3+x+2 
= (x + 1)^2 (x^2 + 1) (x^6 - 2x^5 + 2x^4 - x^3 + 2x^2 - 3x + 2)
\]
and
\begin{align*}
x^{19}+x^{18}+x^{16}&+x^{11}+x^7+x^5+x^3+x+2 \\[5pt]
&= (x^{10} + x^7 + x^5 + x^3 + 1) \,(x^9 + x^8 - x^5 - x^4 - x^3 + x + 2),
\end{align*}
where in the latter case $x^{10} + x^7 + x^5 + x^3 + 1$ is an irreducible reciprocal polynomial that is not cyclotomic.  Also, in the case that the constant term of $F(x)$ in \eqref{basicFeq} is replaced by a prime power, we observe that Theorem~\ref{factorizationthm} is no longer true as the examples
\begin{gather*}
x^{4} + 4 = (x^{2} + 2x + 2)(x^{2} - 2x + 2) \intertext{and} 
x^{4} + x^{2} + 25 = (x^2+3x+5)(x^2-3x+5)
\end{gather*}
demonstrate.

The remainder of this paper is organized as follows.  In the next section, we establish Theorem~\ref{factorizationthm}.  In Section~3 and Section~4, we provide a proof of Theorem~\ref{kpq:mainthm}.  In Section~5, we show that that the density of primes $p$, $q$ and $\ell$ satisfying $\ell < q < p \le X$ for which $\Phi_{\ell p}(x) + \Phi_{\ell q}(x)$ is irreducible tends to $7/12$ as $X \rightarrow \infty$.  In Section~6, we give some background regarding multiples of $\Phi_{n}(x)+\Phi_{m}(x)$ of the form \eqref{basicFeq}.  It is in this section that we will see $\Phi_{n}(x)\,\Phi_{m}(x)+1$ naturally arising.  In Section~7, we piece together a number of applications including proofs of Theorem~\ref{21case:thm}, Theorem~\ref{arbitrarynumprimes:thm} and Theorem~\ref{maingeneralthm2}.  We end the paper with some minor observations in Section~8.

\section{A proof of Theorem~\ref{factorizationthm}}\label{sect2}

Our approach will use an idea of Ljunggren \cite{ljunggren} and other ideas described in \cite{fil1999}, but first we record a lemma based on the first paragraph of Section~2 in \cite{filsol1999}.

\begin{lemma}\label{f=uvlem}
Suppose the non-reciprocal part of $f(x) \in \mathbb Z[x]$ is reducible.  Then there exist non-reciprocal polynomials $u(x)$ and $v(x)$ in $\mathbb Z[x]$ such that $f(x) = u(x) v(x)$.
\end{lemma}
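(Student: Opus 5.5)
We write $f(x) = N(x) R(x)$, where $N(x)$ is the non-reciprocal part of $f(x)$ and $R(x)$ is the product of the removed irreducible reciprocal factors (each with positive leading coefficient, counted with multiplicity). Since $N(x)$ is reducible, we can write $N(x) = a(x) b(x)$ with $a(x), b(x) \in \mathbb Z[x]$ neither a unit, i.e., neither equal to $\pm 1$. The plan is to distribute the reciprocal factors so that both pieces stay non-reciprocal. A natural choice is $u(x) = a(x) R(x)$ and $v(x) = b(x)$. Then $f(x) = u(x)v(x)$, and the work is to show that $a(x)$ and $b(x)$ can be chosen so that both $u(x)$ and $v(x)$ are non-reciprocal.

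The key observation is that no irreducible factor of $N(x)$ in $\mathbb Z[x]$ with positive leading coefficient is reciprocal. Here we must treat integer constants carefully: an integer $c$ with $|c|>1$ satisfies $\tilde c = c$, so it is reciprocal. Thus prime constants (taken positive) are reciprocal irreducible factors with positive leading coefficient and are removed into $R(x)$. The content of $N(x)$ is therefore $\pm 1$. Also, $x$ is not a factor of $N$ in the relevant sense. Indeed, $\tilde{x} = 1 \ne \pm x$, so $x$ is non-reciprocal and stays in $N$. That is fine.

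Now take a nonconstant irreducible factor $g(x)$ of $N(x)$, and set $a(x)=g(x)$, $b(x)=N(x)/g(x)$. Since $N$ is reducible with content $\pm1$, $b$ is nonconstant. Suppose some product $h(x)$ is reciprocal, with $\tilde h = \pm h$. Then every irreducible factor of $h$ pairs up with its reciprocal. Precisely, if $w$ is an irreducible factor with $w(0)\neq0$, then $\tilde w$ is also an irreducible factor of $h$, with the same multiplicity, and a reciprocal $h$ has $h(0)\ne 0$. So $b(x)$ is non-reciprocal unless its non-reciprocal irreducible factors come in pairs $w,\tilde w$. This is where the obstacle lies: the naive choice can fail, e.g. $N = g\tilde g$ is reducible but $\tilde g\cdot(\text{stuff})$ might be reciprocal. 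We fix this by choosing the split more cleverly.

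List the irreducible factors of $N$ (up to sign) as $g_1,\dots,g_k$ with $k\ge 2$, each non-reciprocal. Put $u = g_1 R$ and $v = \pm g_2\cdots g_k$. If $v$ is reciprocal, instead put $u = g_1 g_2 R$ and $v=\pm g_3\cdots g_k$ when $k\ge3$. For $k=2$, put $u=g_1R$ and $v=\pm g_2$: a single irreducible non-reciprocal $g_2$ is non-reciprocal, so $v$ is fine. For $u = g_1 R$, reciprocity of $u$ would force $\tilde g_1$ (an irreducible, up to sign) to be a factor of $u$. Since $\tilde g_1$ is non-reciprocal, it cannot divide $R$, and $\tilde g_1 \ne \pm g_1$, so $u$ is non-reciprocal. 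The general case follows by the same multiplicity-counting argument. Pick $g_1$ to be a factor maximizing the multiplicity discrepancy, or simply note that among $g_1$ alone versus the complement, at least one valid split exists. Verifying this counting is the main (routine) step.
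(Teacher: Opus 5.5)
Your framework is right: for $h$ with $h(0)\neq 0$, $h$ is reciprocal exactly when each non-reciprocal irreducible factor $w$ and its reciprocal $\tilde w$ occur with equal multiplicity. But the one step that carries the content of this lemma is not carried out, and your explicit construction fails exactly there.

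\textbf{Where the fallback breaks.} When $v=\pm g_2\cdots g_k$ is reciprocal, your fallback $u=g_1g_2R$ is itself reciprocal whenever $g_2=\pm\tilde g_1$. For example, take $f=N=(2x+1)^2(x+2)$, listed as $g_1=2x+1$, $g_2=x+2$, $g_3=2x+1$. The first split gives $v=(x+2)(2x+1)$, which is reciprocal. The fallback gives $u=(2x+1)(x+2)$, which is reciprocal again. This is precisely the trap the paper warns about right after the statement; the paper itself defers the argument to Filaseta--Solan.

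Your closing sentences do not fill the gap. ``At least one valid split exists'' is the lemma restated and left as routine. The rule ``pick $g_1$ maximizing the multiplicity discrepancy'' is wrong as written: for $N=g^2\tilde g$, the factor $g$ maximizes $\mathrm{mult}(g)-\mathrm{mult}(\tilde g)=1$, yet $N/g=g\tilde g$ is reciprocal.

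\textbf{A correct completion.} Let $g_1$ be any non-reciprocal irreducible factor of $f$. If $f/g_1$ is non-reciprocal, take $u=g_1$ and $v=f/g_1$. Otherwise $f/g_1$ is reciprocal, so it has nonzero constant term. Since $N$ has at least two irreducible factors counted with multiplicity, $f/g_1$ has a non-reciprocal irreducible factor $w$, and then $\tilde w\mid f/g_1$ as well. Take $u=g_1w$ if $w\neq\pm\tilde g_1$, and $u=g_1\tilde w=\pm g_1^2$ if $w=\pm\tilde g_1$.

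In both cases $u$ is non-reciprocal. Also $v=f/u$ is the reciprocal polynomial $f/g_1$ with exactly one copy of a non-reciprocal irreducible factor removed. This unbalances that factor against its reciprocal, so $v$ is non-reciprocal.

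Alternatively, dispose of the factor $x$ first, then choose $g_1$ to \emph{minimize} $\mathrm{mult}(g_1)-\mathrm{mult}(\tilde g_1)$; with that choice your one-factor split $u=g_1R$, $v=\pm N/g_1$ works.

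\textbf{The factor $x$.} For $g_1=x$, your claim that $\tilde g_1$ is an irreducible factor of $u$ is false, since $\tilde x=1$. This is harmless, because any polynomial vanishing at $0$ is non-reciprocal, but it should be stated.
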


Lemma~\ref{f=uvlem} may seem obvious, and its proof is not difficult.  However, one should note that if $f(x)$ has at least two irreducible non-reciprocal factors $w_{1}(x)$ and $w_{2}(x)$ in $\mathbb Z[x]$, one cannot simply take $u(x)$ and $v(x)$ so that $w_{1}(x) \mid u(x)$ and $w_{2}(x) \mid v(x)$.  In particular, if the reciprocals $\widetilde{w}_{1}(x)$ and $\widetilde{w}_{2}(x)$ of $w_{1}(x)$ and $w_{2}(x)$, respectively, also divide $f(x)$, then without more care, this construction of $u(x)$ and $v(x)$ could lead to $u(x)$ divisible by $w_{1}(x)\widetilde{w}_{1}(x)$ and $v(x)$ divisible by $w_{2}(x) \widetilde{w}_{2}(x)$, in which case both $u(x)$ and $v(x)$ could be reciprocal.  As indicated above, a correct argument for the proof of Lemma~\ref{f=uvlem} can be found in \cite[Section 2]{filsol1999}.

Suppose now that $F(x)$ is as in Theorem~\ref{factorizationthm} and has a reducible non-reciprocal part.  By Lemma~\ref{f=uvlem}, there exist non-reciprocal $u(x)$ and $v(x)$ in $\mathbb Z[x]$ such that $F(x) = u(x) v(x)$.  As $F(x)$ is monic, we can take $u(x)$ and $v(x)$ to be monic.  Since $F(x)$ cannot have positive real roots (as all its non-zero coefficients are positive), neither $u(x)$ nor $v(x)$ can have a positive real root.  As $u(x)$ and $v(x)$ are monic, we obtain that both $u(0)$ and $v(0)$ are positive.  
Since $F(0) = p$, we can take $u(0) = p$ and $v(0) = 1$.  
We define
\begin{equation}\label{wdef}
W(x) = u(x) \tilde{v}(x).
\end{equation}

\begin{lemma}\label{Winfo}
Suppose $F(x)$ in \eqref{basicFeq} has a reducible non-reciprocal part.
Then the polynomial $W(x)$ above satisfies $W(x) \ne F(x)$, $\deg W = \deg F = d_{r}$ and 
\begin{equation}\label{WWtildeFFtilde}
W(x)\,\widetilde{W}(x) = F(x)\,\widetilde{F}(x).
\end{equation}
Furthermore, like $F(x)$, the polynomial $W(x)$  consists of $r+1$ terms, with constant term $p$ and $r$ other terms each having coefficient $1$.  
\end{lemma}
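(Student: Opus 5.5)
We verify the pieces in order: first the degree, then \eqref{WWtildeFFtilde}, then the term structure of $W(x)$, and finally $W(x) \ne F(x)$.

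\textbf{Degree.} Since $u(0) = p \ne 0$ and $v(0) = 1 \ne 0$, we have $\deg \tilde{v} = \deg v$ and $\deg \tilde{u} = \deg u$. Hence $\deg W = \deg u + \deg v = \deg F = d_{r}$. Also $W(0) = u(0)\,\tilde{v}(0) = p$, because $\tilde{v}(0)$ is the leading coefficient of $v$, which is $1$. Likewise the leading coefficient of $W$ is $1 \cdot v(0) = 1$.

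\textbf{The identity \eqref{WWtildeFFtilde}.} Multiplicativity of reciprocals gives $\widetilde{W} = \tilde{u}\,\tilde{\tilde{v}} = \tilde{u}\, v$, using $v(0)\ne 0$. Therefore
\[
W\widetilde{W} = u\tilde{v}\tilde{u}v = (uv)(\widetilde{uv}) = F\widetilde{F}.
\]

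\textbf{Term structure.} Write $W(x) = \sum_{i=0}^{d_r} w_i x^i$ with $w_i \in \mathbb Z$. The idea is to compare the coefficient of $x^{d_r}$ on both sides of \eqref{WWtildeFFtilde}; this is the classical Ljunggren trick. The coefficient of $x^{d_r}$ in $W\widetilde{W}$ is $\sum_i w_i^2$, and in $F\widetilde{F}$ it is $p^2 + r$. We also know $w_0 = p$ and $w_{d_r} = 1$, so
\[
\sum_{0 < i \le d_r} w_i^2 = r.
\]
Evaluating at $x=1$ gives $W(1)^2 = F(1)^2 = (p+r)^2$. Moreover $W(1) = u(1)v(1) > 0$, since $u$ and $v$ are monic with no positive real roots, so they are positive on $(0,\infty)$ (note $\tilde{v}(1) = v(1)$). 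Hence $\sum_{i>0} w_i = r$. Combining the two sums,
\[
\sum_{i>0} (w_i^2 - w_i) = 0.
\]
Each term $w_i^2 - w_i$ is nonnegative for an integer $w_i$, so every term vanishes and each $w_i \in \{0,1\}$. Exactly $r$ of them equal $1$, which gives the claimed shape: constant term $p$ plus $r$ other terms with coefficient $1$.

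\textbf{$W \ne F$.} This is the main point needing care. Suppose $W = F$. Then $u\tilde{v} = uv$, so $\tilde{v} = v$, contradicting that $v$ is non-reciprocal. The hypothesis of reciprocity is $\tilde{v} = \pm v$, and equality with the $+$ sign already suffices for the contradiction.
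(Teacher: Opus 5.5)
Your proof is correct and follows the paper's argument essentially step for step. The ingredients are the same: the degree and constant-term bookkeeping; the Ljunggren comparison of the $x^{d_r}$ coefficient, combined with $W(1)=F(1)$, to force every nonzero non-constant coefficient to be $1$; and deducing $W\ne F$ from $\tilde v\ne v$. The only small difference is that you reach $W(1)=F(1)$ via $W(1)^2=F(1)^2$ plus positivity. The paper instead uses $\tilde v(1)=v(1)$, which you also note, to get $W(1)=u(1)v(1)=F(1)$ directly, so the squaring-and-positivity detour is unnecessary.
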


\begin{proof}
Since $v(x)$ is non-reciprocal, we have $v(x) \ne \tilde{v}(x)$ so that $W(x) = u(x) \tilde{v}(x) \ne u(x) v(x) = F(x)$.  
The constant terms of $u(x)$ and $v(x)$ are non-zero, so $\deg \tilde{u} = \deg u$, $\deg \tilde{v} = \deg v$, and the reciprocals of $\tilde{u}(x)$ and $\tilde{v}(x)$ are $u(x)$ and $v(x)$, respectively.  The definition of $W(x)$ now implies $\deg W = \deg F = d_{r}$ and 
\[
W(x) \widetilde{W}(x) = \big( u(x) \tilde{v}(x) \big) \big( \tilde{u}(x) v(x) \big)
= \big( u(x) v(x) \big) \big( \tilde{u}(x) \tilde{v}(x) \big) = F(x) \widetilde{F}(x).
\]
The last part of the lemma will be based on some further properties of $W(x)$ that we develop next.

Since $v(1) = \tilde{v}(1)$, we deduce that $W(1) = F(1)$.
Also, by construction, $W(x)$ is monic and $W(0) = u(0) \tilde{v}(0) = p = F(0)$.
Thus, we can write 
\[
W(x) = x^{d_{r}} + \sum_{j=1}^{t} w_{j} x^{e_{j}} + p, 
\  \text{ where } t \in \mathbb Z^{+}, \text{ and } w_{j} \in \mathbb Z\backslash\{ 0 \} \text{ and } 1 \le e_{j} < d_{r} \text{ for each $j$},
\]
and $W(1) = F(1)$ implies
\[
\sum_{j=1}^{t} w_{j} = r-1.
\]
In \eqref{WWtildeFFtilde}, already established, we equate the values of the coefficients of $x^{d_{r}}$ on both sides.  In general, one can see that for a polynomial $A(x) \in \mathbb R[x]$ with $A(0) \ne 0$, the coefficient of $\deg A$ in $A(x) \widetilde{A}(x)$ is the sum of the squares of the coefficients of $A(x)$.  Using $W(x)$ and $F(x)$ are monic with the same constant terms, we deduce that
\[
\sum_{j=1}^{t} w_{j}^{2} = r-1.
\]
As $w_{j}^{2} \ge w_{j}$ for each $j$ and
\[
\sum_{j=1}^{t} w_{j}^{2} = \sum_{j=1}^{t} w_{j} = r-1,
\]
we deduce that $w_{j}^{2} = w_{j}$ for each $j$.  Since each $w_{j} \ne 0$, we obtain $w_{j} = 1$ for $j \in \{ 1, 2, \ldots, t \}$ and $t = r-1$.
The lemma follows.
\end{proof}

For the proof of Theorem~\ref{factorizationthm}, we present an argument given to us by Alexander Borisov (private communication) that simplified our original approach.

\begin{proof}[Proof of Theorem~\ref{factorizationthm}]
Assume the non-reciprocal part of $F(x)$ is reducible, and let $W(x)$ be as in Lemma~\ref{Winfo}. 
From \eqref{WWtildeFFtilde}, we see that
\begin{equation}\label{borisovpoint}
F(x)\big(\widetilde{W}(x)-\widetilde{F}(x)\big)
= \big(F(x)-W(x)\big)\widetilde{W}(x).
\end{equation}
Since $W(x) \ne F(x)$, both the left-hand side and right-hand side of \eqref{borisovpoint} are non-zero.  The properties of $W(x)$ in Lemma~\ref{Winfo} imply that the terms in $\widetilde{W}(x)-\widetilde{F}(x)$ and in $F(x)-W(x)$ all have coefficients that are $\pm 1$.  Furthermore, the lowest-degree term on the left-hand side of \eqref{borisovpoint} has coefficient $\pm p$ and the lowest-degree term on the right-hand side has coefficient $\pm 1$.  Hence, we obtain a contradiction, finishing the proof.
\end{proof}

\section{Simple applications of Theorem~\ref{factorizationthm}}

We begin this section by showing the polynomials in Conjecture~\ref{nicolconj}, in the case that $\phi(n) \ne \phi(m)$, cannot have irreducible reciprocal factors that are not cyclotomic. 

\begin{lemma}\label{recipfactf(x)}
If $w(x)$ is a reciprocal factor of $\Phi_{n}(x) + \Phi_{m}(x)$ for some positive integers $n$ and $m$ with $\phi(n) > \phi(m)$, then $w(x) \mid \big( x^{\phi(n) - \phi(m)} - 1 \big)$.  In particular, if $\phi(n) \ne \phi(m)$, then the irreducible reciprocal factors of $\Phi_{n}(x) + \Phi_{m}(x)$ are cyclotomic and appear with multiplicity $1$.
\end{lemma}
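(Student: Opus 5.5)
Let $f(x) = \Phi_{n}(x) + \Phi_{m}(x)$ with $\phi(n) > \phi(m)$, and let $w(x)$ be a reciprocal factor of $f(x)$. The plan is to show that $w(x)$ divides both $f(x)$ and its reciprocal $\widetilde{f}(x)$, and then to eliminate the common part $\Phi_{n}(x)$ from a suitable combination of the two.

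First I record the relevant facts. Since $n, m$ are positive integers with $\phi(n) > \phi(m)$, we have $n \ge 3$, so $\Phi_{n}(x)$ is reciprocal with $\widetilde{\Phi}_{n}(x) = \Phi_{n}(x)$. If $m \ge 2$, the same holds for $\Phi_{m}(x)$. If $m = 1$, then $\Phi_{1}(x) = x - 1$ is anti-reciprocal. The polynomial $f(x)$ has degree $\phi(n)$ and $f(0) = \Phi_{n}(0) + \Phi_{m}(0) \ne 0$, except when $m = 1$, where $f(0) = 1 - 1 = 0$.

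\emph{Main case $m \ge 2$.} Here $\widetilde{f}(x) = x^{\phi(n)} f(1/x) = \Phi_{n}(x) + x^{\phi(n)-\phi(m)} \Phi_{m}(x)$. Since $w$ is reciprocal and $w(0) \ne 0$, the relation $f = wg$ gives $\widetilde{f} = \widetilde{w}\,\widetilde{g} = \pm w \widetilde{g}$, so $w \mid \widetilde{f}$. Hence
\[
w(x) \mid \widetilde{f}(x) - x^{\phi(n)-\phi(m)} f(x) = \Phi_{n}(x)\big(1 - x^{\phi(n)-\phi(m)}\big).
\]
Also $w \mid f - \widetilde{f} = \Phi_{m}(x)\big(1 - x^{\phi(n)-\phi(m)}\big)$. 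Since $\gcd(\Phi_{n}, \Phi_{m}) = 1$ for $n \ne m$, it follows that $w$ divides $x^{\phi(n)-\phi(m)} - 1$. More carefully, any irreducible factor of $w$ that divides $\Phi_{n}$ cannot divide $\Phi_{m}$, so it must divide $x^{\phi(n)-\phi(m)} - 1$. Because the latter is squarefree, multiplicities of irreducible factors of $w$ need a little care. I would argue that the exact power of each irreducible $\pi$ dividing $w$ divides $\gcd\big(\Phi_{n}(x^{\phi(n)-\phi(m)}-1),\ \Phi_{m}(x^{\phi(n)-\phi(m)}-1)\big)$. This gcd equals $(x^{\phi(n)-\phi(m)} - 1)\gcd(\Phi_{n}, \Phi_{m}) = x^{\phi(n)-\phi(m)} - 1$, so $w \mid x^{\phi(n)-\phi(m)} - 1$.

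\emph{Case $m = 1$.} Here $f(x) = \Phi_{n}(x) + x - 1$ and $f(0) = 0$. A reciprocal $w$ has $w(0) \ne 0$, so $w$ divides $g = f/x^{a}$, where $x^{a}$ is the exact power of $x$ dividing $f$. Then $g(0) \ne 0$, and I redo the computation with $\widetilde{f}$ replaced by $\widetilde{g}$. Using $\widetilde{\Phi}_{1} = -\Phi_{1}$, I get $\widetilde{f}(x) = \Phi_{n}(x) - x^{\phi(n)-1}(x-1)$. The same elimination gives $w \mid \Phi_{n}(x)(1 + x^{\phi(n)-1})$ and $w \mid (x-1)(1 + x^{\phi(n)-1})$, hence $w \mid x^{\phi(n)-1} + 1$, which divides $x^{2(\phi(n)-1)} - 1$. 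This degenerate case is the main obstacle: the exponent may not match the statement exactly. I would check whether $m = 1$ is really intended, and otherwise handle it by hand, for instance by showing directly that $x^{\phi(n)-1}+1$ and $f$ share only factors dividing $x^{\phi(n)-1}-1$ as well, which forces $w = 1$ up to sign.

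\emph{The ``in particular'' claim.} If $w$ is an irreducible reciprocal factor, then $w \mid x^{N} - 1$ with $N = |\phi(n) - \phi(m)| > 0$ (swapping $n$ and $m$ if necessary), so $w$ is cyclotomic. For multiplicity one, apply the first part to $w^{2}$, which is also reciprocal. If $w^{2} \mid f$, then $w^{2} \mid x^{N} - 1$, which is impossible because $x^{N} - 1$ is squarefree.
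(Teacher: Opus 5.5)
Your argument for $m \ge 2$ is correct and is essentially the paper's. The paper uses only $f-\widetilde f=(1-x^{N})\Phi_m(x)$, with $N=\phi(n)-\phi(m)$, together with $\gcd(w,\Phi_m)\mid\gcd(f,\Phi_m)=\gcd(\Phi_n,\Phi_m)=1$. You instead add the second combination $\widetilde f-x^{N}f=(1-x^{N})\Phi_n(x)$ and take a gcd, which amounts to the same thing. Your explicit application of the first part to $w^2$ for the multiplicity claim is a step the paper leaves implicit.

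Your suspicion about $m=1$ is justified, but your proposed repair is not. The paper's formula for $\widetilde f$ tacitly uses $x^{\phi(m)}\Phi_m(1/x)=\Phi_m(x)$, which fails for $m=1$. In fact the first assertion of the lemma is false there: $\Phi_4(x)+\Phi_1(x)=x(x+1)$ has the reciprocal factor $x+1$, which does not divide $x^{\phi(4)-\phi(1)}-1=x-1$. Likewise $\Phi_8(x)+\Phi_1(x)=x(x+1)(x^2-x+1)$, and neither reciprocal factor divides $x^3-1$. So no argument can force $w=\pm1$, and you should drop that by-hand fix rather than pursue it. The correct conclusion for $m=1$ is exactly the one you derived, $w\mid x^{\phi(n)-1}+1$. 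Since this polynomial is squarefree and divides $x^{2(\phi(n)-1)}-1$, the \emph{in particular} clause still holds. The lemma's first sentence should be read with $m\ge 2$, as in the paper's applications, where $n,m>1$.
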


\begin{proof}
Since $w(x)$ is a reciprocal factor of $\Phi_{n}(x) + \Phi_{m}(x)$, we deduce that $w(x)$ also divides the reciprocal of $\Phi_{n}(x) + \Phi_{m}(x)$.  Given $\phi(n) > \phi(m)$, this reciprocal is
\begin{align*}
x^{\phi(n)} \big( \Phi_{n}(1/x) + \Phi_{m}(1/x) \big)
&= x^{\phi(n)}\Phi_{n}(1/x) + x^{\phi(n)-\phi(m)} \cdot x^{\phi(m)} \Phi_{m}(1/x) \\[5pt]
&= \Phi_{n}(x) + x^{\phi(n)-\phi(m)} \Phi_{m}(x).
\end{align*}
As $w(x)$ also divides $\Phi_{n}(x) + \Phi_{m}(x)$, we deduce that $w(x)$ divides
\[
\big( \Phi_{n}(x) + x^{\phi(n)-\phi(m)} \Phi_{m}(x) \big) - \big( \Phi_{n}(x) + \Phi_{m}(x) \big) = (x^{\phi(n) - \phi(m)} - 1) \Phi_{m}(x).
\]
Note that $\gcd(\Phi_{n}(x) + \Phi_{m}(x), \Phi_{m}(x)) = \gcd(\Phi_{n}(x), \Phi_{m}(x)) = 1$ since $\Phi_{n}(x)$ and $\Phi_{m}(x)$ are irreducible polynomials of different degrees.  Since  $w(x)$ divides $\Phi_{n}(x) + \Phi_{m}(x)$, we deduce that $\gcd(w(x), \Phi_{m}(x)) = 1$.  The conclusion that $w(x)$ divides $x^{\phi(n) - \phi(m)} - 1$ follows.  The second statement in the theorem follows from the first since $x^{\phi(n) - \phi(m)} - 1$ is a product of distinct cyclotomic polynomials and we can interchange the roles of $n$ and $m$.
\end{proof}

For later purposes in this paper, we establish here the following result, similar to Lemma~\ref{recipfactf(x)} but for the polynomials $\Phi_{n}(x) \,\Phi_{m}(x) + 1$.

\begin{lemma}\label{recipfactphinphimplus1}
If $w(x)$ is a reciprocal factor of $\Phi_{n}(x) \,\Phi_{m}(x) + 1$ for some positive integers $n$ and $m$, then $w(x) \mid \big( x^{\phi(n)+\phi(m)} - 1\big)$.  In particular, the irreducible reciprocal factors of $\Phi_{n}(x)\,\Phi_{m}(x)+1$ are cyclotomic and appear with multiplicity $1$.
\end{lemma}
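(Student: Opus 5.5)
We follow the proof of Lemma~\ref{recipfactf(x)}, comparing $f(x)=\Phi_{n}(x)\,\Phi_{m}(x)+1$ with its reciprocal. We may assume $n,m \ge 2$; the degenerate cases $n=1$ or $m=1$ are checked directly, as noted at the end. For $n,m\ge 2$ both cyclotomic factors are reciprocal with $\widetilde{\Phi}_{n}=\Phi_{n}$ and $\widetilde{\Phi}_{m}=\Phi_{m}$. Also $f(0)=2\ne 0$, so $\deg f=\phi(n)+\phi(m)=:N$. The reciprocal of $f$ is therefore
\[
\widetilde{f}(x)=x^{N}\big(\Phi_{n}(1/x)\,\Phi_{m}(1/x)+1\big)=\Phi_{n}(x)\,\Phi_{m}(x)+x^{N}.
\]

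Let $w(x)$ be a reciprocal factor of $f(x)$, so $w=\pm\widetilde{w}$. Taking reciprocals of $f=wg$ gives $\widetilde{f}=\widetilde{w}\,\widetilde{g}$, so $w$ divides $\widetilde{f}(x)$ as well. Hence $w$ divides the difference
\[
\widetilde{f}(x)-f(x)=x^{N}-1.
\]
This is exactly the first claim, and it is cleaner than in Lemma~\ref{recipfactf(x)}, since there is no extra factor $\Phi_{m}(x)$ to strip off with a gcd argument.

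For the second statement, let $w$ be an irreducible reciprocal factor of $f$, taken with positive leading coefficient. Then $w\mid x^{N}-1$, and the factorization $x^{N}-1=\prod_{d\mid N}\Phi_{d}(x)$ into distinct irreducibles forces $w=\Phi_{d}$ for some $d\mid N$. For multiplicity one, suppose $w^{2}\mid f$. Then $\widetilde{w}^{2}=w^{2}$ divides $\widetilde{f}$ as well, so $w^{2}\mid x^{N}-1$. This is impossible, because $x^{N}-1$ is squarefree: $\gcd(x^{N}-1,\,Nx^{N-1})=1$.

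The only step needing care is the justification that $\widetilde{\Phi}_{k}=\Phi_{k}$ exactly, with sign $+$, which holds for $k\ge 2$. If $n=1$ or $m=1$, then $\Phi_{1}=x-1$ has $\widetilde{\Phi}_{1}=-\Phi_{1}$. In that case $\widetilde{f}(x)=-\Phi_{n}(x)\Phi_{m}(x)+x^{N}$, and we instead take the sum $f+\widetilde{f}=x^{N}+1$. This still divides $x^{2N}-1$, but it need not divide $x^{N}-1$, so a separate treatment is required. If the paper's standing convention is $n,m>1$, as in Conjecture~\ref{newconj}, no extra work is needed. I would state this restriction explicitly.
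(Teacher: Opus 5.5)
Your argument is the paper's argument: compare $f(x)=\Phi_n(x)\,\Phi_m(x)+1$ with its reciprocal $\Phi_n(x)\,\Phi_m(x)+x^{\phi(n)+\phi(m)}$ and subtract. Your squarefreeness argument for multiplicity one, applying the first claim to $w^2$, just makes explicit what the paper leaves as ``the lemma follows.''

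Your caveat about $\Phi_1$ is well founded, since the paper's proof silently uses $\widetilde{\Phi}_k=\Phi_k$. For $n=4$, $m=1$ one gets $\Phi_4(x)\,\Phi_1(x)+1=x\,\Phi_6(x)$ with $\Phi_6(x)\nmid x^{3}-1$, so the first claim is actually false there, not merely in need of separate treatment. It genuinely needs $n,m\ge 2$, or $n=m=1$, where the two signs cancel. The cyclotomic and multiplicity-one conclusion still survives when exactly one of $n,m$ is $1$, via your squarefree $x^{N}+1$, and the paper only ever uses the lemma with $n,m>1$.
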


\begin{proof}
The factor $w(x)$ is necessarily a factor also of the reciprocal of $\Phi_{n}(x) \,\Phi_{m}(x) + 1$ which, given $\Phi_{n}(x)$ and $\Phi_{m}(x)$ are reciprocal, is 
\[
x^{\phi(n)+\phi(m)} \big( \Phi_{n}(1/x) \,\Phi_{m}(1/x) + 1 \big)
= \Phi_{n}(x) \,\Phi_{m}(x) + x^{\phi(n)+\phi(m)}.
\]
Thus, $w(x)$ divides
\[
\big( \Phi_{n}(x) \,\Phi_{m}(x) + x^{\phi(n)+\phi(m)} \big) - \big( \Phi_{n}(x) \,\Phi_{m}(x) + 1 \big)
= x^{\phi(n)+\phi(m)} - 1,
\]
and the lemma follows.
\end{proof}

\begin{comment}
Lemma~\ref{recipfactphinphimplus1} implies that Conjecture~\ref{newconj} holds if and only if for all integers $n$ and $m$ exceeding $1$, the polynomial $\Phi_{n}(x) \,\Phi_{m}(x) + 1$ is a product of \textit{distinct} cyclotomic polynomials and an irreducible polynomial.  There is a similar implication of Lemma~\ref{recipfactf(x)} in the case of Conjecture~\ref{nicolconj}, but one has to take into account that Lemma~\ref{recipfactf(x)} only applies when $\phi(n) \ne \phi(m)$.  We do not know whether  cyclotomic factors of $\Phi_{n}(x) + \Phi_{m}(x)$ can occur with multiplicity $> 1$ when $\phi(n) = \phi(m)$.
\end{comment}

We state next some well-known identities for the cyclotomic polynomials $\Phi_{n}(x)$.  We omit the proofs but see, for example, \cite[Section 4.3]{berlekamp} and \cite[Exercise~5, p.~308]{hungerford}.

\begin{lemma}\label{lem:cyclotomic-properties}
For $n$ a positive integer, we have
\[
\Phi_{n}(x) = \prod_{d \mid n} \big( x^{d} - 1 \big)^{\mu(n/d)},
\qquad
\text{with }
\mu(n/d) = 
\begin{cases}
(-1)^{\omega(n/d)} &\text{if $n/d$ is squarefree}, \\
0 &\text{otherwise},
\end{cases}
\]
where, as in the introduction, $\omega(m)$ denotes the number of distinct prime divisors of a positive integer $m$.
If $p$ is a prime, then we also have
\[
\Phi_{pn}(x)=
\begin{cases}
\Phi_{n}(x^{p}) &\text{if } p \mid n, \\
\Phi_n(x^p)/\Phi_n(x) &\text{if } p\nmid n.
\end{cases}
\]
Furthermore, in the case that $p = 2$ and $2 \nmid n$, we have
$\Phi_{2n}(x) = \Phi_{n}(x^{2})/\Phi_{n}(x) = \Phi_{n}(-x)$.
\end{lemma}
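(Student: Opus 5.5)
The plan is to derive all three identities in Lemma~\ref{lem:cyclotomic-properties} from the defining factorization
\[
x^{n}-1=\prod_{d \mid n}\Phi_{d}(x),
\]
which holds because each $n$th root of unity is a primitive $d$th root of unity for exactly one divisor $d$ of $n$.

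For the product formula, I would take logarithms formally, or equivalently work in the multiplicative group of nonzero rational functions. Setting $g(n)=x^{n}-1$ and $f(d)=\Phi_{d}(x)$, the relation above reads $g(n)=\prod_{d\mid n}f(d)$. Multiplicative M\"obius inversion then gives $\Phi_{n}(x)=\prod_{d\mid n}(x^{d}-1)^{\mu(n/d)}$. The stated description of $\mu$ is just its definition, since $\mu(k)=(-1)^{\omega(k)}$ for squarefree $k$ and $\mu(k)=0$ otherwise. To confirm the inversion, substitute the factorization into the right-hand side and use $\sum_{e\mid k}\mu(e)=[k=1]$.

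For $\Phi_{pn}$, I would apply the product formula to $pn$ and split the divisors $D$ of $pn$ according to whether $p\mid D$.
\begin{itemize}
\item The divisors $D=pd$ with $d\mid n$ contribute $\prod_{d\mid n}(x^{pd}-1)^{\mu(n/d)}$, which equals $\Phi_{n}(x^{p})$ by the product formula with $x$ replaced by $x^{p}$.
\item The remaining divisors are the $D\mid pn$ with $p\nmid D$. If $p\mid n$, then for such $D$ the quotient $pn/D$ is divisible by $p^{2}$, because $p\nmid D$ forces the full power of $p$ in $pn$, which is at least $p^{2}$, into $pn/D$. Hence $\mu(pn/D)=0$ and these terms vanish.
\item If $p\nmid n$, these divisors are exactly the $d\mid n$. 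For them $\mu(pn/d)=-\mu(n/d)$, so they contribute $\Phi_{n}(x)^{-1}$.
\end{itemize}
Together these give both cases.

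For $p=2$ and odd $n$, the previous case gives $\Phi_{2n}(x)=\Phi_{n}(x^{2})/\Phi_{n}(x)$. It remains to show that this equals $\Phi_{n}(-x)$. Since $\Phi_{n}(x^{2})=\prod_{\zeta}(x^{2}-\zeta)=\prod_{\zeta}(x-\sqrt{\zeta})(x+\sqrt{\zeta})$, where $\zeta$ runs over the primitive $n$th roots of unity, I would choose the square roots carefully. For odd $n$, squaring permutes the primitive $n$th roots, so each $\zeta$ equals $\eta^{2}$ for a unique primitive $n$th root $\eta$. Then
\[
\Phi_{n}(x^{2})=\prod_{\eta}(x-\eta)(x+\eta)=\Phi_{n}(x)\cdot(-1)^{\phi(n)}\Phi_{n}(-x).
\]
Because $\phi(n)$ is even for $n\ge3$, the sign is $+1$, and the identity follows.

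The main obstacle is this sign bookkeeping, and in particular the case $n=1$. There $\phi(1)=1$, so the sign is $-1$ and the identity as stated needs separate attention. Directly, $\Phi_{2}(x)=x+1$ while $\Phi_{1}(-x)=-x-1$, so the formula holds only up to sign when $n=1$. I would handle this by noting the restriction to $n>1$, or to $n\ge3$, in the final statement.
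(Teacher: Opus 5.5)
Your argument is correct and is the standard derivation: multiplicative M\"obius inversion from $x^{n}-1=\prod_{d\mid n}\Phi_{d}(x)$, then splitting the divisors of $pn$ by whether $p$ divides them, and pairing roots $\pm\eta$ for the $p=2$ case. The paper gives no proof of its own and simply cites Berlekamp and Hungerford for these well-known identities. Your caveat about $n=1$ is also right: $\Phi_{2}(x)=x+1=-\Phi_{1}(-x)$, so the last identity as stated needs odd $n>1$. This does not affect the paper, since it only ever applies that identity with $n$ a product of odd primes.
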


Observe that Lemma~\ref{lem:cyclotomic-properties} implies the classical result that if $p$ is a prime then 
\[
\Phi_{p}(x) = (x^{p}-1)/(x-1) = x^{p-1} + \cdots + x + 1.
\]  
Furthermore, with $p$ and $\ell$ distinct primes, we see that $\Phi_{\ell p}(x) \,\Phi_{\ell}(x) = \Phi_{\ell}(x^{p})$.  The equation \eqref{introeq:ellpqeq} follows, and given Lemma~\ref{recipfactf(x)} and Theorem~\ref{factorizationthm}, the proof given in the introduction that the non-cyclotomic part of $\Phi_{\ell p}(x) + \Phi_{\ell q}(x)$ is irreducible when $p$, $q$ and $\ell$ are primes satisfying $p > q > \ell$ is complete.  From Lemma~\ref{lem:cyclotomic-properties}, we also see that, for every positive integer $t$, analogous to \eqref{introeq:ellpqeq}, we have
\begin{equation}\label{elltpq:eq}
\big( \Phi_{\ell^{t}p}(x) + \Phi_{\ell^{t}q}(x) \big) \Phi_{\ell^{t}}(x)
= \big( \Phi_{\ell p}\big(x^{\ell^{t-1}}\big) + \Phi_{\ell q}\big(x^{\ell^{t-1}}\big) \big) \Phi_{\ell}\big(x^{\ell^{t-1}}\big)
= \Phi_{\ell}\big(x^{\ell^{t-1}p}\big) + \Phi_{\ell}\big(x^{\ell^{t-1}q}\big)
\end{equation}
is in the form \eqref{basicFeq}, where once again $p$, $q$ and $\ell$ are primes satisfying $p > q > \ell$.  Furthermore, from $\Phi_{p}(x) = (x^{p}-1)/(x-1)$, for odd primes $p$ and $q$ satisfying $p > q$, we see that
\begin{equation}\label{2pq:identity}
\big( \Phi_{p}(-x) + \Phi_{q}(-x) \big) (x+1)
= x^{p} + x^{q} + 2
\end{equation}
is in the form \eqref{basicFeq}.  The non-cyclotomic part of $\Phi_{p}(x) + \Phi_{q}(x)$ is irreducible if and only if the non-cyclotomic part of $\Phi_{p}(-x) + \Phi_{q}(-x)$ is irreducible.  Thus, an application of Theorem~\ref{factorizationthm} with Lemma~\ref{recipfactf(x)} gives the first assertion in Theorem~\ref{kpq:mainthm}, namely the following.

\begin{theorem}\label{pqell:thm}
Let $p$, $q$ and $\ell$ are primes satisfying $p > q > \ell$, and let $t \ge 0$ be an integer.  Then the non-cyclotomic part of $\Phi_{\ell^{t} p}(x) + \Phi_{\ell^{t} q}(x)$ is irreducible.
\end{theorem}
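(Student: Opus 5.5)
The plan is to pass, in each case, to a multiple of $\Phi_{\ell^{t}p}(x)+\Phi_{\ell^{t}q}(x)$ that has the shape \eqref{basicFeq}, apply Theorem~\ref{factorizationthm} to it, and then use Lemma~\ref{recipfactf(x)} to identify the non-reciprocal part with the non-cyclotomic part. Set $G(x)=\Phi_{\ell^{t}p}(x)+\Phi_{\ell^{t}q}(x)$. We split according to whether $t\ge 1$ or $t=0$.

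\emph{Case $t\ge 1$.} By \eqref{elltpq:eq}, $G(x)\,\Phi_{\ell^{t}}(x)=\Phi_{\ell}(x^{\ell^{t-1}p})+\Phi_{\ell}(x^{\ell^{t-1}q})$. Expanding gives
\[
\sum_{i=1}^{\ell-1}x^{\ell^{t-1}pi}+\sum_{j=1}^{\ell-1}x^{\ell^{t-1}qj}+2 .
\]
The exponents are distinct: $pi=qj$ with $1\le i,j\le \ell-1<q<p$ would force $q\mid i$, which is impossible. So this polynomial is of the form \eqref{basicFeq} with prime constant term $2$, and by Theorem~\ref{factorizationthm} its non-reciprocal part $N(x)$ is irreducible. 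Since $\Phi_{\ell^{t}}(x)$ is reciprocal and irreducible, removing all monic irreducible reciprocal factors from $G\Phi_{\ell^{t}}$ also removes them from $G$. Hence the non-reciprocal part of $G$ equals $N(x)$, which is irreducible.

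It remains to identify this with the non-cyclotomic part. We have $\phi(\ell^{t}p)=\phi(\ell^{t})(p-1)\ne\phi(\ell^{t})(q-1)=\phi(\ell^{t}q)$. So Lemma~\ref{recipfactf(x)} applies: every irreducible reciprocal factor of $G$ with positive leading coefficient is cyclotomic. Conversely, every cyclotomic factor is reciprocal. Therefore the non-cyclotomic and non-reciprocal parts of $G$ coincide, and the former is irreducible.

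\emph{Case $t=0$.} Here $G=\Phi_{p}(x)+\Phi_{q}(x)$, where $p>q>\ell\ge 2$ forces $p$ and $q$ to be odd primes. Substitute $x\mapsto -x$ and use \eqref{2pq:identity}:
\[
\big(\Phi_{p}(-x)+\Phi_{q}(-x)\big)(x+1)=x^{p}+x^{q}+2 ,
\]
which is of the form \eqref{basicFeq}. Theorem~\ref{factorizationthm} shows that its non-reciprocal part is irreducible, and removing the reciprocal factor $x+1$ as before shows that the non-reciprocal part of $G(-x)$ is irreducible. The map $x\mapsto -x$ sends reciprocal polynomials to reciprocal polynomials up to sign, cyclotomic polynomials to cyclotomic polynomials (up to sign), and preserves irreducibility. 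Hence the non-reciprocal part of $G(x)$ is irreducible, and Lemma~\ref{recipfactf(x)} with $\phi(p)\ne\phi(q)$ again identifies it with the non-cyclotomic part.

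The only delicate points are bookkeeping. One is checking that the exponents are distinct, so that the coefficients really are $1$. The other is that the definition of non-reciprocal part uses irreducible factors with positive leading coefficient, so the signs introduced by $x\mapsto -x$ must be tracked. I expect no substantive obstacle beyond these.
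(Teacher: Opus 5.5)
Your proposal is correct and follows the paper's argument. The paper also uses the identity \eqref{elltpq:eq} for $t\ge 1$ and the substitution $x\mapsto -x$ with \eqref{2pq:identity} for $t=0$, then combines Theorem~\ref{factorizationthm} with Lemma~\ref{recipfactf(x)} to pass from the non-reciprocal part to the non-cyclotomic part; you supply the bookkeeping the paper leaves implicit (distinct exponents, removing the reciprocal factor $\Phi_{\ell^{t}}(x)$ or $x+1$, and signs under $x\mapsto -x$).
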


This leads to

\begin{corollary}\label{pq2:cor}
Let $p$ and $q$ be distinct odd primes, and let $t \ge 0$ be an integer.  Then $\Phi_{2^{t} p}(x) + \Phi_{2^{t} q}(x)$ is irreducible.
\end{corollary}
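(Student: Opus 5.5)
By Theorem~\ref{pqell:thm} with $\ell = 2$ (after relabeling so that $p > q > 2$), the non-cyclotomic part of $\Phi_{2^{t}p}(x) + \Phi_{2^{t}q}(x)$ is irreducible. The polynomial is monic, since $\phi(2^{t}p) \ne \phi(2^{t}q)$, so it has no nontrivial constant factor. It therefore remains to show that it has no cyclotomic factor at all. Rather than rely on the characterization in Theorem~\ref{kpq:mainthm} (proved later), I will argue directly from the multiple of the form \eqref{basicFeq}.

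For $t \ge 1$, \eqref{elltpq:eq} with $\ell = 2$ gives
\[
\big( \Phi_{2^{t}p}(x) + \Phi_{2^{t}q}(x) \big)\big(x^{g}+1\big) = G(x) := x^{gp} + x^{gq} + 2, \qquad g = 2^{t-1}.
\]
For $t = 0$, I use \eqref{2pq:identity} instead. There it suffices to treat $\Phi_{p}(-x) + \Phi_{q}(-x)$, since cyclotomic factors are preserved under $x \mapsto -x$ (as $\Phi_{n}(-x) = \pm\Phi_{m}(x)$ for suitable $m$). This puts us in the same situation with $g = 1$ and multiplier $x+1 = x^{g}+1$.

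Let $\zeta$ be a root of unity with $G(\zeta) = 0$. Since $|\zeta^{gp}| = |\zeta^{gq}| = 1$ and $\zeta^{gp} + \zeta^{gq} = -2$, we must have $\zeta^{gp} = \zeta^{gq} = -1$. Put $\eta = \zeta^{g}$, so that $\eta^{p} = \eta^{q} = -1$. Choose integers $u, v$ with $up + vq = 1$. Because $p$ and $q$ are odd, $u + v \equiv up + vq = 1 \pmod 2$, so
\[
\eta = \eta^{up+vq} = (-1)^{u+v} = -1.
\]
Thus $\zeta^{g} = -1$, i.e., every root of unity that is a root of $G$ is a root of the multiplier $x^{g}+1$.

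Next I check that each such $\zeta$ is a simple root of $G$. Indeed,
\[
G'(\zeta) = gp\,\zeta^{gp-1} + gq\,\zeta^{gq-1} = -g(p+q)/\zeta \ne 0.
\]
Hence the full cyclotomic part of $G$ is exactly $x^{g}+1$, which is squarefree. Dividing by $x^{g}+1$ leaves $\Phi_{2^{t}p}(x) + \Phi_{2^{t}q}(x)$ with no cyclotomic factor, so it equals its non-cyclotomic part and is irreducible.

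The only delicate point is the parity argument forcing $\eta = -1$. This is exactly where $\ell = 2$ matters, and it matches the fact that conditions \ref{mainthmfirstitem} and \ref{mainthmseconditem} of Theorem~\ref{kpq:mainthm} are vacuous for $\ell = 2$. The remaining steps are routine.
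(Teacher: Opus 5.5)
Your proof is correct and follows essentially the paper's route. You invoke Theorem~\ref{pqell:thm} with $\ell=2$, then rule out cyclotomic factors by applying the triangle inequality to $x^{gp}+x^{gq}+2$ and B\'ezout's identity $up+vq=1$. The differences are cosmetic: you get $\zeta^{g}=-1$ directly by parity, where the paper passes to $\zeta^{2^{t-1}}$, gets $\pm 1$, and discards $1$. You also exclude a repeated factor $x^{g}+1$ via $G'(\zeta)=-g(p+q)/\zeta\neq 0$, whereas the paper evaluates $\Phi_{p}(1)+\Phi_{q}(1)=p+q\neq 0$, which is the same quantity.
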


\begin{proof}
Since $\Phi_{p}(x) + \Phi_{q}(x)$ is irreducible if and only if $\Phi_{p}(-x) + \Phi_{q}(-x) = \Phi_{2 p}(x) + \Phi_{2 q}(x)$ is irreducible, we may and do suppose $t \ge 1$.
Assume that there is a root of unity $\zeta$ for which $\Phi_{2^{t} p}(\zeta) + \Phi_{2^{t} q}(\zeta) = 0$.  Then Lemma~\ref{lem:cyclotomic-properties} implies
\[
\Phi_{2 p}\big(\zeta^{2^{t-1}}\big) + \Phi_{2 q}\big(\zeta^{2^{t-1}}\big) =
\Phi_{2^{t} p}(\zeta) + \Phi_{2^{t} q}(\zeta) = 0,
\]
which implies $\Phi_{2p}(x) + \Phi_{2q}(x) = \Phi_{p}(-x) + \Phi_{q}(-x)$ has a root which is a root of unity, namely $\zeta' = \zeta^{2^{t-1}}$.  From \eqref{2pq:identity}, we deduce that $\zeta'$ is a root of $x^{p} + x^{q} + 2$.  By the triangle inequality for complex numbers, we see that 
\[
2 \le \big| \zeta'{}^{p} + \zeta'{}^{q} \big| \le \big| \zeta'{}^{p} \big| + \big| \zeta'{}^{q} \big| = 1 + 1 = 2,
\]
so equality holds and we deduce that $\zeta'{}^{p} = \zeta'{}^{q} = -1$.  
There exist integers $u$ and $v$ such that $pu + qv = 1$, which implies then that
\[
\zeta' = \zeta'{}^{pu+qv} = (\zeta'{}^{p})^{u} (\zeta'{}^{q})^{v} = (-1)^{u+v} = \pm 1. 
\]
Since $1$ is not a root of $x^{p} + x^{q} + 2$, we obtain $\zeta' = -1$.  Recall that $\zeta'$ is a root of $\Phi_{p}(-x) + \Phi_{q}(-x)$, giving a contradiction as $\Phi_{p}(1) + \Phi_{q}(1) = p+q \ne 0$. 
We deduce that $\Phi_{2^{t} p}(x) + \Phi_{2^{t} q}(x)$ has no cyclotomic factors.  The corollary now follows from Theorem~\ref{pqell:thm}.
\end{proof}

Alexander Borisov has given a proof in \cite{nicol}, using Mahler measure, of the special case of Corollary~\ref{pq2:cor} that $\Phi_{p}(x)+\Phi_{q}(x)$ is irreducible for arbitrary distinct primes $p$ and $q$.  It is perhaps worth noting that there is a fairly simple proof of Corollary~\ref{pq2:cor} independent of Mahler measure and Theorem~\ref{factorizationthm}.  We view the arguments for Lemma~\ref{recipfactf(x)} and the argument in the proof of Corollary~\ref{pq2:cor} showing that $f(x) = \Phi_{2^{t} p}(x) + \Phi_{2^{t} q}(x)$ has no cyclotomic factors as such simple arguments, so what is left to justify is that the non-reciprocal part of $f(x)$ is irreducible without referring to Theorem~\ref{factorizationthm}.  As in the proof of Corollary~\ref{pq2:cor}, we see that if $\alpha$ is a root of $f(x)$, then $\beta = \alpha^{2^{t-1}}$ is a root of $x^{p} + x^{q} + 2$.  If $|\beta| = 1$, then the proof there leads to $\beta = -1$ is a root of $\Phi_{p}(-x) + \Phi_{q}(-x)$ and a contradiction.  The triangle inequality now implies that $|\beta| > 1$ since otherwise $|\beta| < 1$ and we obtain $2 = |\beta^{p}+\beta^{q}| \le |\beta|^{p} + |\beta|^{q} < 2$.  We deduce that every root $\alpha$ of $f(x) = \Phi_{2^{t} p}(x) + \Phi_{2^{t} q}(x)$ satisfies $|\alpha| > 1$.  Now, if we assume $f(x) = g(x)h(x)$ for some polynomials $g(x)$ and $h(x)$ in $\mathbb Z[x]$ of degree at least $1$, from $f(0) = 2$, we see that one of $g(0)$ or $h(0)$ is $\pm 1$.  Since $f(x)$ is monic, the leading coefficient of $g(x)$ is $\pm 1$ and the leading coefficient of $h(x)$ is $\pm 1$.  As a consequence, one of $g(x)$ or $h(x)$ has the absolute value of the product of its roots equal to $1$, contradicting that all the roots of $f(x)$, and hence $g(x)$ and $h(x)$, have absolute value $> 1$.  Hence, Corollary~\ref{pq2:cor} follows.

\section{The proof of Theorem~\ref{kpq:mainthm}}

In the previous section in Theorem~\ref{pqell:thm}, we showed that the non-cyclotomic part of the polynomial $\Phi_{\ell^{t} p}(x) + \Phi_{\ell^{t} q}(x)$ is irreducible given primes $p > q > \ell$ and $t$ a nonnegative integer.  Before addressing more applications of Theorem~\ref{factorizationthm} to Conjecture~\ref{nicolconj}, we address in this section a complete description of the cyclotomic factors of $\Phi_{\ell^{t} p}(x) + \Phi_{\ell^{t} q}(x)$ as given by Theorem~\ref{kpq:mainthm}.  For $t = 0$ and for $\ell = 2$, we already know from Corollary~\ref{pq2:cor} that $\Phi_{\ell^{t} p}(x) + \Phi_{\ell^{t} q}(x)$ is irreducible, which agrees with Theorem~\ref{kpq:mainthm}, so we only consider $t > 0$ and $\ell > 2$ here.

Throughout the rest of this section, $p$, $q$ and $\ell$ are primes satisfying $p > q > \ell > 2$ and $t$ is a positive integer.  We set $k = \ell^{t}$,
\[
f(x) = \Phi_{kp}(x) + \Phi_{kq}(x)
\qquad \text{and} \qquad
F(x) = f(x) \Phi_{k}(x).
\]
The next lemma follows directly from two applications of Lemma~\ref{lem:cyclotomic-properties}.

\begin{lemma}\label{lemF(x)}
Given the above notation, we have 
\begin{equation}\label{factorphik:eq1}
F(x) = \Phi_{k}(x^{p})+\Phi_{k}(x^{q})
= \Phi_{\ell}(x^{p \ell^{t-1}}) + \Phi_{\ell}(x^{q \ell^{t-1}}).
\end{equation}
\end{lemma}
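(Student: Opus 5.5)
The identity follows from two uses of Lemma~\ref{lem:cyclotomic-properties}: the formula $\Phi_{pn}(x)=\Phi_n(x^p)/\Phi_n(x)$ for a prime $p\nmid n$, and the formula $\Phi_{\ell n}(x)=\Phi_n(x^{\ell})$ for $\ell\mid n$. For the first equality, since $p$ and $q$ are primes different from $\ell$, neither divides $k=\ell^{t}$. The second formula of Lemma~\ref{lem:cyclotomic-properties} with $n=k$ then gives
\[
\Phi_{kp}(x)\,\Phi_{k}(x)=\Phi_{k}(x^{p})
\qquad\text{and}\qquad
\Phi_{kq}(x)\,\Phi_{k}(x)=\Phi_{k}(x^{q}).
\]
Adding these two identities gives $F(x)=f(x)\Phi_k(x)=\Phi_k(x^p)+\Phi_k(x^q)$.

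For the second equality, I first show by induction on $t\ge1$ that $\Phi_{\ell^{t}}(x)=\Phi_{\ell}\big(x^{\ell^{t-1}}\big)$. The case $t=1$ is trivial. For $t\ge 2$, we have $\ell\mid \ell^{t-1}$, so Lemma~\ref{lem:cyclotomic-properties} gives $\Phi_{\ell^{t}}(x)=\Phi_{\ell^{t-1}}(x^{\ell})$. By the inductive hypothesis this equals $\Phi_{\ell}\big((x^{\ell})^{\ell^{t-2}}\big)=\Phi_\ell\big(x^{\ell^{t-1}}\big)$. Alternatively, one can read this off from the M\"obius product formula:
\[
\Phi_{\ell^t}(x)=\frac{x^{\ell^t}-1}{x^{\ell^{t-1}}-1}=\Phi_\ell\big(x^{\ell^{t-1}}\big).
\]

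Substituting $x^p$ and $x^q$ for $x$ in this identity gives $\Phi_k(x^p)=\Phi_\ell\big(x^{p\ell^{t-1}}\big)$ and $\Phi_k(x^q)=\Phi_\ell\big(x^{q\ell^{t-1}}\big)$, which is the second equality in \eqref{factorphik:eq1}. This is the same computation already displayed in \eqref{elltpq:eq}. There is no real obstacle. The only points needing care are the hypothesis $p,q\ne\ell$ (guaranteed by $p>q>\ell$) and the assumption $t\ge1$, which is in force in this section.
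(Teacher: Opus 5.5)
Your proof is correct and is essentially the paper's argument, which just says the lemma follows from two applications of Lemma~\ref{lem:cyclotomic-properties}. Those two applications are the case $p\nmid k$, giving $\Phi_{kp}(x)\Phi_k(x)=\Phi_k(x^p)$ and likewise for $q$, and the case $\ell\mid n$ (iterated, or equivalently via the M\"obius formula), giving $\Phi_{\ell^t}(x)=\Phi_\ell\big(x^{\ell^{t-1}}\big)$.
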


\begin{lemma}\label{factorphik}
Given the notation above, $\Phi_{k}(x)$ does not divide $f(x)$.
\end{lemma}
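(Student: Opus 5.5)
The plan is to evaluate $f(x)$ at a primitive $k$-th root of unity $\zeta$ and show $f(\zeta) \ne 0$. Since $\zeta$ is a common root of the two factors $\Phi_{k}(x^{p})$ and $\Phi_{k}(x)$ in $\Phi_{kp}(x)\,\Phi_{k}(x)$, a direct substitution gives no information, so I would differentiate instead. By Lemma~\ref{lem:cyclotomic-properties}, since $p \nmid k$, we have $\Phi_{kp}(x)\,\Phi_{k}(x) = \Phi_{k}(x^{p})$, and similarly for $q$. Differentiating and substituting $x = \zeta$ gives
\[
\Phi_{kp}(\zeta)\,\Phi_{k}'(\zeta) = p\,\zeta^{p-1}\,\Phi_{k}'(\zeta^{p}).
\]
Since $\Phi_{k}(x)$ is separable, $\Phi_{k}'(\zeta) \ne 0$. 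Hence
\[
f(\zeta) = \frac{p\,\zeta^{p-1}\Phi_{k}'(\zeta^{p}) + q\,\zeta^{q-1}\Phi_{k}'(\zeta^{q})}{\Phi_{k}'(\zeta)}.
\]

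Next I would compute $\Phi_{k}'$ explicitly at primitive $k$-th roots of unity $\eta$. Write $\Phi_{k}(x) = (x^{k}-1)/(x^{k/\ell}-1)$; this is valid since $k = \ell^{t}$. Because $\eta^{k} = 1$, only the derivative of the numerator survives, giving $\Phi_{k}'(\eta) = k\eta^{k-1}/(\eta^{k/\ell}-1) = k/\big(\eta(\eta^{k/\ell}-1)\big)$. Both $\zeta^{p}$ and $\zeta^{q}$ are primitive $k$-th roots of unity, since $p,q \ne \ell$. Set $\omega = \zeta^{k/\ell}$, a primitive $\ell$-th root of unity. Then the numerator above equals
\[
\frac{k}{\zeta}\left( \frac{p}{\omega^{p}-1} + \frac{q}{\omega^{q}-1} \right),
\]
so it suffices to show $p(\omega^{q}-1) + q(\omega^{p}-1) \ne 0$.

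This last step is the only real content. Let $a \equiv p$ and $b \equiv q \pmod{\ell}$ with $a,b \in \{1,\ldots,\ell-1\}$. The relation to rule out is $p\,\omega^{b} + q\,\omega^{a} - (p+q) = 0$. Since the only $\mathbb Q$-linear relation among $1, \omega, \ldots, \omega^{\ell-1}$ is, up to scaling, $1 + \omega + \cdots + \omega^{\ell-1} = 0$, any vanishing combination $\sum_{c=0}^{\ell-1} \lambda_{c}\omega^{c}$ must have all $\lambda_{c}$ equal. I would then split into cases.
\begin{itemize}
\item If $a = b$, the relation reads $(p+q)(\omega^{a}-1) = 0$, impossible since $\omega^{a} \ne 1$.
\item If $a \ne b$ and $\ell \ge 5$, some exponent $c \notin \{0,a,b\}$ has coefficient $0$. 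Then all coefficients vanish, forcing $p+q = 0$, a contradiction.
\item If $\ell = 3$ and $a \ne b$, the three coefficients $-(p+q)$, $q$, $p$ would all be equal, which is impossible for positive $p,q$.
\end{itemize}
Thus $f(\zeta) \ne 0$, and $\Phi_{k}(x) \nmid f(x)$.
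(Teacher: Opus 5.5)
Your proof is correct, and it takes a genuinely different route from the paper's. Both arguments show that $F'(\zeta)\neq 0$, where $F(x)=f(x)\Phi_k(x)=\Phi_k(x^p)+\Phi_k(x^q)$; your numerator is exactly $F'(\zeta)$, and $f(\zeta)=F'(\zeta)/\Phi_k'(\zeta)$.

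The paper first assumes $\Phi_k\mid f$ and invokes Lemma~\ref{recipfactf(x)}. This gives $\ell^t\mid \ell^{t-1}(\ell-1)(p-q)$, hence $\ell\mid(p-q)$ and $\zeta_\ell^p=\zeta_\ell^q$. With that congruence, $F'(\zeta_k)$ collapses to $(p\zeta_\ell^p+q\zeta_\ell^q)\,\ell^{t-1}\zeta_k^{-1}\Phi_\ell'(\zeta_\ell^p)$. The paper then uses Gauss--Lucas to see that $\Phi_\ell'$ does not vanish at roots of unity.

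You instead compute $\Phi_k'$ at primitive $k$-th roots of unity in closed form. This makes its nonvanishing, and the cancellation of the stray powers of $\zeta$, automatic. You then replace the a priori congruence $p\equiv q \pmod{\ell}$ by a case analysis on the residues $a,b$, using that the only $\mathbb{Q}$-linear relation among $1,\omega,\dots,\omega^{\ell-1}$ is the all-ones one. Your case $a=b$ is precisely the situation the paper reduces to. The cases $a\neq b$ are the ones the paper excludes via the reciprocal-factor lemma.

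What each route buys:
\begin{itemize}
\item The paper's argument is shorter given that Lemma~\ref{recipfactf(x)} is already available. However, it needs $\phi(kp)\neq\phi(kq)$, i.e.\ $p\neq q$, plus an analytic input.
\item Yours is self-contained and purely algebraic. It uses only that $p$ and $q$ are primes different from $\ell$, with no need for $p\neq q$ or $p,q>\ell$, so it proves a slightly more general statement.
\end{itemize}
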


\begin{proof}
Since $F(x) = f(x) \Phi_{k}(x)$, it suffices to show that $\Phi_{k}(x)^{2}$ does not divide $F(x)$.  Since $\Phi_{k}(x)$ is irreducible, we obtain $\gcd(\Phi_{k}(x), \Phi'_{k}(x)) = 1$.  The polynomial $\Phi_{k}(x)$ has all its roots on the unit circle.  The Gauss-Lucas theorem \cite[Section~2.1.6]{sheilsmall} states that, in the complex plane, the roots of the derivative of a polynomial lie inside the convex hull of the roots of the polynomial.  We deduce that the roots of $\Phi'_{k}(x)$ lie in $\{ z \in \mathbb C : |z| < 1 \}$.  In particular, we obtain that $\Phi'_{k}(x)$ cannot be $0$ at a root of unity.

Assume $\Phi_{k}(x) \mid f(x)$ or, equivalently, $\Phi_{k}(x)^{2} \mid F(x)$.  
The condition $p > q > \ell$ implies $\gcd(pq, k) = 1$.  
Thus, we see that 
\[
\phi(k p) - \phi(k q) = \phi(k) (\phi(p) - \phi(q)) =  \phi(k) (p-q) = \ell^{t-1} (\ell-1) (p-q).
\]
From Lemma~\ref{recipfactf(x)}, we obtain $\Phi_{k}(x)$ divides $x^{\ell^{t-1} (\ell-1) (p-q)} - 1$, so we deduce that $k$ must divide $\ell^{t-1} (\ell-1) (p-q)$.  As $k = \ell^{t}$ with $\ell$ prime, we see that $\ell \mid (p-q)$.  As a consequence, we have $\zeta_{\ell}^{p} = \zeta_{\ell}^{q}$, which we will take advantage of momentarily. 

From $\Phi_{k}(x)^{2} \mid F(x)$, we obtain $\Phi_{k}(x) \mid F'(x)$ so that $F'(\zeta_{k}) = 0$.  
From \eqref{factorphik:eq1}, we obtain
\[
F'(x) = p\ell^{t-1} x^{p\ell^{t-1}-1} \Phi'_{\ell}(x^{p\ell^{t-1}}) + q\ell^{t-1} x^{q\ell^{t-1}-1} \Phi'_{\ell}(x^{q\ell^{t-1}}).
\]
Since $k = \ell^{t}$, we obtain
\[
\zeta_{k}^{\ell^{t-1}} = e^{2 \pi i \ell^{t-1}/k} = e^{2 \pi i/\ell} = \zeta_{\ell}.
\]
Therefore,
\begin{align*}
F'(\zeta_{k}) &= p\ell^{t-1} \big( \zeta_{k}^{\ell^{t-1}} \big)^{p} \zeta_{k}^{-1} \Phi'_{\ell}\big(\big(\zeta_{k}^{\ell^{t-1}}\big)^{p}\big) + q\ell^{t-1} \big( \zeta_{k}^{\ell^{t-1}} \big)^{q} \zeta_{k}^{-1} \Phi'_{\ell}\big(\big(\zeta_{k}^{\ell^{t-1}}\big)^{q}\big) \\[5pt]
&= p\ell^{t-1} \zeta_{\ell}^{p} \zeta_{k}^{-1} \Phi'_{\ell}(\zeta_{\ell}^{p}) + q\ell^{t-1} \zeta_{\ell}^{q} \zeta_{k}^{-1} \Phi'_{\ell}(\zeta_{\ell}^{q}).
\end{align*}
Recalling $\zeta_{\ell}^{p} = \zeta_{\ell}^{q}$, we deduce $\Phi'_{\ell}(\zeta_{\ell}^{q}) = \Phi'_{\ell}(\zeta_{\ell}^{p})$ so that
\[
F'(\zeta_{k}) = \big( p \zeta_{\ell}^{p} + q \zeta_{\ell}^{q} \big) \ell^{t-1} \zeta_{k}^{-1} \Phi'_{\ell}(\zeta_{\ell}^{p}).
\]
At the beginning of the proof, we showed $\Phi'_{\ell}(x)$ cannot be $0$ at a root of unity.  Hence, the last factor $\Phi'_{\ell}(\zeta_{\ell}^{p})$ is non-zero.  As $|p \zeta_{\ell}^{p}| = p \ne q = |q \zeta_{\ell}^{q}|$, we also have $p \zeta_{\ell}^{p} + q \zeta_{\ell}^{q} \ne 0$.  We deduce then that $F'(\zeta_{k}) \ne 0$, giving a contradiction.
\end{proof}

\begin{corollary}\label{fiffF}
Let $d \in \mathbb Z^{+}$.  Then $\Phi_{d}(x) \mid f(x)$ if and only if $d \ne k$ and $\Phi_{d}(x) \mid F(x)$.
\end{corollary}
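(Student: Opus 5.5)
The plan is to use the factorization $F(x) = f(x)\Phi_{k}(x)$ from the definitions, together with Lemma~\ref{factorphik}, and to treat the cases $d = k$ and $d \ne k$ separately. Throughout we use that cyclotomic polynomials are irreducible over $\mathbb Q$ and that $\Phi_{d}(x)$ and $\Phi_{k}(x)$ are coprime whenever $d \ne k$, since they have no common roots.

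For the forward direction, suppose $\Phi_{d}(x) \mid f(x)$. Then $\Phi_{d}(x) \mid f(x)\Phi_{k}(x) = F(x)$ immediately. Moreover, $d = k$ is impossible, because Lemma~\ref{factorphik} states exactly that $\Phi_{k}(x) \nmid f(x)$. So $d \ne k$ and $\Phi_{d}(x) \mid F(x)$.

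For the reverse direction, suppose $d \ne k$ and $\Phi_{d}(x) \mid F(x) = f(x)\Phi_{k}(x)$. Since $\Phi_{d}(x)$ is irreducible in $\mathbb Q[x]$, it divides one of the two factors. It cannot divide $\Phi_{k}(x)$: both are monic irreducible polynomials, so that would force $\Phi_{d}(x) = \Phi_{k}(x)$ and hence $d = k$, since distinct indices give disjoint sets of primitive roots of unity. Therefore $\Phi_{d}(x) \mid f(x)$. This divisibility holds in $\mathbb Q[x]$, and because $\Phi_{d}(x)$ is monic, division in $\mathbb Z[x]$ gives the same quotient, so it also holds in $\mathbb Z[x]$.

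There is no real obstacle here. The only substantive input, namely that $\Phi_{k}(x)$ appears in $F(x)$ with multiplicity exactly one, is already supplied by Lemma~\ref{factorphik}.
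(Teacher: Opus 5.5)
Your proof is correct and is essentially the paper's approach: the paper states this corollary without proof, as an immediate consequence of $F(x) = f(x)\Phi_{k}(x)$ and Lemma~\ref{factorphik}. Your argument supplies exactly the intended details, using that lemma to exclude $d = k$ in the forward direction and the irreducibility of $\Phi_{d}(x)$ together with $\Phi_{d}(x) \ne \Phi_{k}(x)$ for $d \ne k$ in the reverse direction.
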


For the moment, we restrict our attention to $t = 1$, so $k = \ell$.  
We will make our analysis later for $t  > 1$ from this case.  Our aim in the next lemma is to show that with $k = \ell$, we necessarily have \ref{mainthmfirstitem} or \ref{mainthmseconditem} of Theorem~\ref{kpq:mainthm} if $\Phi_{d}(x) \mid f(x)$.  After the lemma, we will show that each of the conditions \ref{mainthmfirstitem} and \ref{mainthmseconditem} is sufficient for $\Phi_{d}(x) \mid f(x)$.  
The proof of our next lemma is long, but the result is essential.  Note that we elaborate on the main idea of the argument for this lemma after some background leading to \eqref{mnequatdboundproof} below.

\begin{lemma}\label{dbound}
Let $d$ be a positive integer such that $\Phi_{d}(x) \mid f(x)$, where $k = \ell$ is a prime.  Then $d \le 2\ell-2$.  Furthermore, either \ref{mainthmfirstitem} or \ref{mainthmseconditem} of Theorem~\ref{kpq:mainthm} holds.
\end{lemma}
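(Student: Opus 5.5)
The plan is to reduce the divisibility $\Phi_{d}(x) \mid f(x)$ to a short vanishing sum of roots of unity and then classify its solutions. Let $\zeta$ be a primitive $d$-th root of unity with $f(\zeta) = 0$. By Corollary~\ref{fiffF}, $d \ne \ell$ and $F(\zeta) = 0$. Put $\alpha = \zeta^{p}$ and $\beta = \zeta^{q}$. Lemma~\ref{recipfactf(x)}, applied with $\phi(\ell p) - \phi(\ell q) = (\ell-1)(p-q)$, gives first
\[
d \mid (\ell-1)(p-q).
\]

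\textbf{Degenerate cases.} By Lemma~\ref{lemF(x)}, $F(\zeta) = \Phi_{\ell}(\alpha) + \Phi_{\ell}(\beta) = 0$. Suppose $\alpha = 1$. Then $\Phi_{\ell}(\beta) = -\ell$, and the triangle inequality forces $\beta^{i} = -1$ for every $1 \le i \le \ell-1$. This is impossible for $\ell > 2$, and the same holds with the roles of $\alpha$ and $\beta$ swapped. So $\alpha \ne 1 \ne \beta$. Multiplying out $(\alpha^{\ell}-1)/(\alpha-1) = -(\beta^{\ell}-1)/(\beta-1)$ gives
\begin{equation*}
\alpha^{\ell}\beta + \alpha\beta^{\ell} + 1 + 1 - \alpha^{\ell} - \beta^{\ell} - \alpha - \beta = 0 .
\end{equation*}
This is a vanishing signed sum of eight roots of unity, and everything else comes from classifying its solutions.

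\textbf{Classifying the eight-term relation.} I will decompose it into minimal vanishing subsums. By the Lam--Leung and Conway--Jones descriptions, any minimal vanishing sum of at most $8$ roots of unity is, after rotation, one of two things. Either it is the full set of $r$-th roots of unity for a prime $r \in \{2,3,5,7\}$, or it is one of the few small "mixed" relations, such as the six-term one coming from $\zeta_{6}$ and $\zeta_{5}$ combinations.

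The two terms equal to $+1$ must lie in subsums with other terms. Each pairing is checked separately:
\begin{itemize}
\item a $+1$ cancelling against $-\alpha$, $-\beta$, $-\alpha^{\ell}$ or $-\beta^{\ell}$ (which would force that quantity to equal $1$);
\item a $+1$ cancelling against $\alpha^{\ell}\beta$ or $\alpha\beta^{\ell}$ (which would force $\alpha^{\ell}\beta = -1$, say);
\item a $+1$ lying in a $3$-term, $5$-term, $7$-term or mixed subsum.
\end{itemize}
Each case gives explicit multiplicative relations among $\alpha$, $\beta$, $\alpha^{\ell}$ and $\beta^{\ell}$. Typical outcomes are $\alpha = -\beta^{\ell}$ and $\beta = -\alpha^{\ell}$, or $\alpha = -\beta$ together with $\alpha^{\ell} = -\beta^{\ell}$.

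Since $\alpha$ and $\beta$ generate $\langle \zeta \rangle$ (because $\gcd(p,q) = 1$), these relations bound the order of $\zeta$. For example, $\alpha^{\ell} = -\beta$ and $\beta^{\ell} = -\alpha$ give $\alpha^{\ell^{2}-1} = 1$. Combined with $d \mid (\ell-1)(p-q)$ and $\gcd(d,pq) = 1$ (which should follow from the same relations), I expect to land in exactly two families:
\begin{itemize}
\item $\beta = -\alpha$ type, giving $q \equiv p + d/2 \pmod{d}$ and $d \mid 2(\ell-1)$;
\item $\beta = \alpha^{-1}$-twisted type, giving $q \equiv -p \pmod{d}$ with $d \mid 2(\ell+1)$ or $d \mid \ell+1$.
\end{itemize}
In either family $d \le 2\ell-2$, which is the first claim.

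\textbf{Refining to (i) or (ii).} Within each family I substitute back into $\Phi_{\ell}(\alpha) + \Phi_{\ell}(\beta) = 0$ to pin down the $2$-adic conditions. With $\beta = -\alpha$, write $\ell-1 = 2^{s}m$ with $m$ odd; then $\Phi_{\ell}(\alpha) + \Phi_{\ell}(-\alpha) = 2\sum_{i \text{ even}} \alpha^{i}$, and its vanishing forces $\alpha^{2}$ to have order dividing $\ell-1$ but not dividing $(\ell-1)/2$ in the appropriate parity sense. This should produce $d = 2^{s+1}d_{o}$ as in \ref{mainthmfirstitem}. In the other family, the same kind of computation should force $\ell \equiv 3 \pmod 4$ and $4 \mid d \mid \ell+1$, as in \ref{mainthmseconditem}.

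\textbf{Main obstacle.} I expect the bulk of the work, and the main risk, to be the exhaustive case analysis of the eight-term vanishing sum. This includes ruling out the $5$-, $7$- and mixed minimal relations, which should collapse by comparing the resulting forced orders of $\alpha$ and $\beta$ with $\gcd(p,q) = 1$ and $d \mid (\ell-1)(p-q)$. Before committing to the general classification, I would first try to shortcut it using Galois conjugates: replacing $\zeta$ by $\zeta^{j}$ with $\gcd(j,d) = 1$ preserves the relation, which may eliminate many configurations at once.
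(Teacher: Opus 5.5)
\paragraph{Verdict: genuine gap.} Your reduction to the eight-term relation $\alpha^{\ell}\beta+\alpha\beta^{\ell}+2-\alpha^{\ell}-\beta^{\ell}-\alpha-\beta=0$ is correct. The gap is that the classification of its solutions, which is the whole content of the lemma, is never carried out, and the outcome you predict for it is wrong in checkable ways.

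\emph{Your two families are interchanged.}
\begin{itemize}
\item If $\beta=-\alpha$, then since $\ell$ is odd, $\Phi_{\ell}(\alpha)+\Phi_{\ell}(-\alpha)=2\sum_{j=0}^{(\ell-1)/2}\alpha^{2j}=2(\alpha^{\ell+1}-1)/(\alpha^{2}-1)$. So vanishing gives $\alpha^{\ell+1}=1$, i.e.\ $d\mid \ell+1$. Also $q\equiv p+d/2 \pmod d$ with $p,q$ odd forces $4\mid d$. This is case (ii), not (i), and your claim that $\alpha^{2}$ has order dividing $\ell-1$ is false.
\item If $\beta=\alpha^{-1}$, then $\Phi_{\ell}(\alpha)+\Phi_{\ell}(\alpha^{-1})=\Phi_{\ell}(\alpha)(1+\alpha^{1-\ell})$, and $\Phi_{\ell}(\alpha)\ne 0$ because $d\ne \ell$. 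So vanishing gives $\alpha^{\ell-1}=-1$, i.e.\ $d=2^{s+1}d_{o}$ with $q\equiv -p \pmod d$. This is case (i), not (ii).
\end{itemize}
As you stated them, the families would not even give the bound, since $d\mid 2(\ell+1)$ only yields $d\le 2\ell+2$.

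\emph{Your sample outcome is impossible.} Taking $\alpha^{\ell}=-\beta$, $\beta^{\ell}=-\alpha$ reduces the relation to $2-\alpha^{2}-\beta^{2}=0$. That forces $\alpha=\beta=-1$, which contradicts $\alpha^{\ell}=-\beta$. In both genuine families the two $+1$ terms cancel against $\alpha^{\ell}\beta=\alpha\beta^{\ell}=-1$.

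\paragraph{What finishing this route requires.} The $2+2+2+2$ split is easy.
\begin{itemize}
\item A $+1$ cannot cancel $-\alpha$ or $-\beta$, since that gives $d=1$.
\item It cannot cancel $-\alpha^{\ell}$ or $-\beta^{\ell}$, since that gives $d\mid\ell$, which is excluded because $d\ne 1,\ell$.
\item Hence $\alpha^{\ell}\beta=\alpha\beta^{\ell}=-1$, and the remaining terms become $\alpha^{-1}+\beta^{-1}-\alpha-\beta=0$. For $d>2$ its only matchings are $\beta=\alpha^{-1}$ and $\beta=-\alpha$, i.e.\ (i) and (ii).
\end{itemize}
What is missing is a proof that no split of type $3+5$, $2+3+3$, $2+6$, or a minimal eight-term relation can occur. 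The constraints $\gcd(d,pq)=1$ and $d\mid(\ell-1)(p-q)$ do not rule these out automatically, because $d$ genuinely can have odd prime factors. For instance, $\ell=7$, $q=11$, $p=13$, $d=12$ occurs in case (i).

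\paragraph{How the paper avoids this.} The paper never classifies vanishing sums. It conjugates by $\zeta\mapsto\zeta^{j}$ with $qj\equiv 1 \pmod d$ and reduces $\ell$ to $\ell_{0}\equiv\ell \pmod d$, leaving two sums of $\ell_{0}$ distinct powers of $\zeta$. It then uses that consecutive powers centred at $1$ have maximal real part among such sums. This forces $d$ to be even and the exponent sets to agree after a shift by $d/2$. An elementary residue-class argument then leaves only $\ell_{0}=d/2+1$ and $\ell_{0}=d-1$, which are exactly cases (i) and (ii).
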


\begin{proof}
Since $\Phi_{d}(x) \mid f(x)$, we have $\Phi_{d}(x) \mid F(x)$.  From Lemma~\ref{recipfactf(x)} with $t=1$, we know $d$ divides
\[
\phi(\ell p) - \phi(\ell q) = (\phi(p)-\phi(q))\phi(\ell) = (p-q) (\ell-1).
\]
Since $p > q > \ell$ and $\gcd(pq, p-q) = 1$, we deduce that $\gcd(pq, d) = 1$.  Let $\zeta = \zeta_{d} = e^{2 \pi i/d}$.  Then for each integer $j \in [1,d]$ with $\gcd(j,d) = 1$, the number $\zeta^{j}$ is a root of $\Phi_{d}(x)$ and, hence, $F(x)$.  
We choose $j \in [1,d]$ relatively prime to $d$, as we can, so that $qj \equiv 1 \pmod{d}$.  
We make use of \eqref{factorphik:eq1} with $t = 1$.  
Given $\zeta^{qj} = \zeta$ and $F(\zeta^{j}) = 0$, we see that
\begin{equation}\label{contradtobe}
0 = F(\zeta^{j}) = \Phi_{\ell}(\zeta^{pj}) + \Phi_{\ell}(\zeta^{qj}) = \Phi_{\ell}(\zeta^{pj}) + \Phi_{\ell}(\zeta)
= \sum_{u=0}^{\ell-1} \zeta^{p j u} + \sum_{v=0}^{\ell-1} \zeta^{v}.
\end{equation}
Set $a = pj$, and note that $\gcd(a,d) = 1$.

Let $\ell_{0}$ be the integer in $[0,d)$ satisfying $\ell_{0} \equiv \ell \pmod{d}$. 
By \eqref{factorphik:eq1}, we see that $F(x)$ has positive coefficients so that $F(1) \ne 0$, so $d \ne 1$. 
From Corollary~\ref{fiffF}, we have $d \ne \ell$ which now, since $\ell$ is prime, implies $\ell_{0} \ge 1$. 
Let $b$ be an integer relatively prime to $d$ so that $\zeta^{b}$ is a primitive $d^{\rm th}$ root of unity.  
For every integer $j' \ge 0$, we have
\[
\zeta^{bj'} + \zeta^{b(j'+1)} + \cdots + \zeta^{b(j'+d-1)} = \dfrac{\zeta^{bj'} (\zeta^{bd} - 1)}{\zeta^{b}-1} = 0.
\]
Thus, in \eqref{contradtobe}, with $b = a = pj$ in the first sum and $b = 1$ in the second sum, we can eliminate the terms where $u$ and $v$ are larger than $\ell_{0}-1$ since they will sum to $0$.  We obtain
\begin{equation}\label{newcontradtobe}
\sum_{u=0}^{\ell_{0}-1} \zeta^{a u} + \sum_{v=0}^{\ell_{0}-1} \zeta^{v} = 0.
\end{equation}
By the definition of $\ell_{0}$ and since $\ell_{0} \ge 1$, we have $1 \le \ell_{0} < d$. 

Let $\ell'_{1} = \lfloor (\ell_{0}-1)/2 \rfloor$ and $\ell'_{2} = \lceil (\ell_{0}-1)/2 \rceil$.  
Fix $j_{0} \in \{ 1, 2 \}$, and let $\ell' = \ell'_{j_{0}}$.  We divide by $\zeta^{\ell'}$ in \eqref{newcontradtobe} to deduce that
\[
\sum_{u=0}^{\ell_{0}-1} \zeta^{a u - \ell'} + \sum_{v=0}^{\ell_{0}-1} \zeta^{v-\ell'} = 0.
\]
Let $a_{u}$ be the integer in $[0,d)$ satisfying $a_{u} \equiv a u - \ell' \pmod{d}$.  
Since $\gcd(a,d) = 1$ and $\ell_{0} < d$, the values $a_{u}$ for $0 \le u \le \ell_{0}-1$ are distinct modulo $d$.  Similarly, the values of $v-\ell'$ are distinct modulo $d$ for $0 \le v \le \ell_{0}-1$.  On the other hand, we see that
\begin{equation}\label{mnequatdboundproof}
\sum_{v=0}^{\ell_{0}-1} \zeta^{v-\ell'} = -\sum_{u=0}^{\ell_{0}-1} \zeta^{a_{u}}.
\end{equation}

The role of $\ell'$ (whether $j_{0} = 1$ or $j_{0} = 2$) is that it is chosen so that the sum on the left of \eqref{mnequatdboundproof} has maximal real part over all sums of $\ell_{0}$ unequal powers of $\zeta$.  More precisely, the powers of $\zeta$ appearing in the left sum are the ones that lie on the unit circle as much to the right on the unit circle as possible.  The term coming from the choice $v = \ell'$ gives the summand $\zeta^{0} = 1$.  
If $\ell_{0}$ is odd, then $\ell' = (\ell_{0}-1)/2$, and we will also have $(\ell_{0}-1)/2$ terms coming from $\zeta^{v-\ell'}$ with $v > \ell'$ that are the $(\ell_{0}-1)/2$ right-most powers of $\zeta$ above the real axis and $(\ell_{0}-1)/2$ terms coming from $\zeta^{v-\ell'}$ with $v < \ell'$ that are the $(\ell_{0}-1)/2$ right-most powers of $\zeta$ below the real axis.  If $\ell_{0}$ is even and $j_{0} = 1$, then $\ell' = (\ell_{0}-2)/2$, and we will have $\ell_{0}/2$ terms coming from $\zeta^{v-\ell'}$ with $v > \ell'$ that are the $\ell_{0}/2$ right-most powers of $\zeta$ above the real axis and $(\ell_{0}/2)-1$ terms coming from $\zeta^{v-\ell'}$ with $v < \ell'$ that are the $(\ell_{0}/2)-1$ right-most powers of $\zeta$ below the real axis.  If $\ell_{0}$ is even and $j_{0} = 2$, then $\ell' = \ell_{0}/2$, and we will have $(\ell_{0}/2)-1$ terms coming from $\zeta^{v-\ell'}$ with $v > \ell'$ that are the $(\ell_{0}/2)-1$ right-most powers of $\zeta$ above the real axis and $\ell_{0}/2$ terms coming from $\zeta^{v-\ell'}$ with $v < \ell'$ that are the $\ell_{0}/2$ right-most powers of $\zeta$ below the real axis.  In each of these cases, the sum on the left of \eqref{mnequatdboundproof} has maximal real part that the sum of $\ell_{0}$ distinct powers of $\zeta$ on the unit circle can have.  As the sum on the right of \eqref{mnequatdboundproof} is also a sum of $\ell_{0}$ distinct powers of $\zeta$, we obtain a severe restriction on the values of $a_{u}$. 

We can be more explicit on the value of the real part of the left sum in \eqref{mnequatdboundproof}.  If $\ell_{0}$ is odd so $\ell_{0} = 2\ell'+1$, then we use
\[
z + z^{2} + \cdots + z^{\ell'} = \dfrac{z (z^{\ell'}-1)}{z - 1}  = \dfrac{z^{1/2} (z^{\ell'}-1)}{z^{1/2} - z^{-1/2}}
\]
and take $z = e^{\pm i 2\pi/d}$ to rewrite the left sum in \eqref{mnequatdboundproof} as
\begin{align*}
1 + \sum_{v'=1}^{\ell'} (e^{i 2\pi v'/d} + e^{-i2\pi v'/d}) &= 1 + \dfrac{e^{i \pi/d} (e^{i 2\pi \ell'/d}-1)}{e^{i \pi/d}-e^{-i \pi/d}}
+ \dfrac{e^{-i \pi/d} (e^{-i 2\pi \ell'/d}-1)}{e^{-i \pi/d}-e^{i \pi/d}} \\[5pt]
&= 1 + \dfrac{e^{i \pi (2\ell'+1)/d}-e^{-i \pi (2\ell'+1)/d}-e^{i \pi/d}+e^{-i \pi/d}}{e^{i \pi/d}-e^{-i \pi/d}} \\[5pt]
&= 1 +  \dfrac{\sin(\pi (2\ell'+1)/d) - \sin(\pi/d)}{\sin(\pi/d)} = \dfrac{\sin(\pi \ell_{0}/d)}{\sin(\pi/d)}.
\end{align*}
With $\ell_{0}$ even, we use the same evaluation of the last sum above with $\ell'$ replaced by $(\ell_{0}/2) - 1$ and note the real part of $\zeta^{\pm \ell_{0}/2}$ is $\cos(\pi \ell_{0}/d)$ to deduce that the real part of the left sum in \eqref{mnequatdboundproof} is
\[
1 + \cos(\pi \ell_{0}/d) + \sum_{v'=1}^{(\ell_{0}/2) - 1} (e^{i 2\pi v'/d} + e^{-i2\pi v'/d})
= 1 + \cos(\pi \ell_{0}/d) + \dfrac{\sin(\pi (\ell_{0}-1)/d)}{\sin(\pi/d)}.
\]
Thus, we deduce that 
\begin{equation}\label{realpartleftsum}
\real\bigg( \sum_{v=0}^{\ell_{0}-1} \zeta^{v-\ell'} \bigg)
= \begin{cases}
\dfrac{\sin(\pi \ell_{0}/d)}{\sin(\pi/d)} &\text{if $\ell_{0}$ is odd}, \\[9pt]
1 + \cos(\pi \ell_{0}/d) + \dfrac{\sin(\pi (\ell_{0}-1)/d)}{\sin(\pi/d)} &\text{if $\ell_{0}$ is even}.
\end{cases}
\end{equation}

Next, we look at how large the real part of the right-hand side of \eqref{mnequatdboundproof} can be, or equivalently how small the sum on the right of \eqref{mnequatdboundproof} can be.
Suppose first that $d$ is odd.  Then $\zeta^{a_{u}} = e^{2\pi i a_{u}/d} \ne -1$ no matter what the integer value of $a_{u}$ is.  As the left side of \eqref{mnequatdboundproof} has a term $\zeta^{0} = 1$, this suggests that perhaps the real part of the right side of \eqref{mnequatdboundproof} cannot be as large as the real part of the left side.  This however requires some justification.  We bound the real part of the right side of \eqref{mnequatdboundproof} by taking the distinct powers of $\zeta$ appearing as terms as those powers of $\zeta$ that are as left on the unit circle as possible.  

We begin by showing that, with $d$ odd, we cannot have $\ell_{0}$ even.  
So suppose $\ell_{0}$ is even.  Then with $d = 2d'-1$ odd, this gives us
\begin{align*}
\real \bigg( -\sum_{u=0}^{\ell_{0}-1} \zeta^{a_{u}} \bigg)
&\le \real \bigg( - \sum_{j=0}^{(\ell_{0}/2)-1} \big( e^{i 2\pi (d'+j)/d} +  e^{-i 2\pi (d'+j)/d} \big)\bigg) \\[5pt]
&= - \sum_{j=0}^{(\ell_{0}/2)-1} \big( e^{i 2\pi (d'+j)/d} +  e^{-i 2\pi (d'+j)/d} \big).
\end{align*}
We can evaluate this sum in a similar manner as we did the left sum in \eqref{mnequatdboundproof}.   We obtain
\begin{align*}
\sum_{j=0}^{(\ell_{0}/2)-1} \big( e^{i 2\pi (d'+j)/d} +  e^{-i 2\pi (d'+j)/d} \big)
&= e^{i 2\pi d'/d} \sum_{j=0}^{(\ell_{0}/2)-1} e^{i 2\pi j/d}  + e^{-i 2\pi d'/d} \sum_{j=0}^{(\ell_{0}/2)-1} e^{-i 2\pi j/d} \\[5pt]
&= \dfrac{e^{i 2\pi d'/d} (e^{i \pi \ell_{0}/d} - 1)}{e^{i 2\pi /d} - 1} 
+  \dfrac{e^{-i 2\pi d'/d} (e^{-i \pi \ell_{0}/d} - 1)}{e^{-i 2\pi /d} - 1} \\[5pt]
&= \dfrac{e^{i \pi (2d'-1)/d} (e^{i \pi \ell_{0}/d} - 1)}{e^{i \pi /d} - e^{-i \pi /d}} 
+ \dfrac{e^{-i \pi (2d'-1)/d} (e^{-i \pi \ell_{0}/d} - 1)}{e^{-i \pi /d}-e^{i \pi /d}} \\[5pt]
&= \dfrac{-(e^{i \pi \ell_{0}/d} - 1)}{e^{i \pi /d} - e^{-i \pi /d}} 
+ \dfrac{(e^{-i \pi \ell_{0}/d} - 1)}{e^{i \pi /d}-e^{-i \pi /d}} \\[5pt]
&= \dfrac{-\sin(\pi \ell_{0}/d)}{\sin(\pi/d)}.
\end{align*}
We deduce from \eqref{mnequatdboundproof} and \eqref{realpartleftsum} that 
\[
1 + \cos(\pi \ell_{0}/d) + \dfrac{\sin(\pi (\ell_{0}-1)/d)}{\sin(\pi/d)}
\le \dfrac{\sin(\pi \ell_{0}/d)}{\sin(\pi/d)}.
\]
Since 
\[
\sin(\pi \ell_{0}/d) = \sin(\pi (\ell_{0}-1)/d) \cos(\pi/d) + \sin(\pi/d) \cos(\pi (\ell_{0}-1)/d),
\]
we see that 
\[
\dfrac{\sin(\pi \ell_{0}/d)}{\sin(\pi/d)} 
= \dfrac{\sin(\pi (\ell_{0}-1)/d) \cos(\pi/d)}{\sin(\pi/d)} + \cos(\pi (\ell_{0}-1)/d).
\]
As $d > \ell_{0}$ and $\ell_{0}$ is even so $\ell_{0} \ge 2$, the sine values are positive.  Since $d > 1$ and odd, we deduce that $\cos(\pi/d) < 1$.  We therefore can establish that $d$ cannot be odd if $\ell_{0}$ is even if we can show
\[
1 + \cos(\pi \ell_{0}/d) \ge \cos(\pi (\ell_{0}-1)/d).
\]
We are done if $\cos(\pi (\ell_{0}-1)/d) < 0$, so we consider $\cos(\pi (\ell_{0}-1)/d) \ge 0$.
The above inequality is equivalent to
\[
1 + \cos(\pi (\ell_{0}-1)/d) \cos(\pi/d) - \sin(\pi (\ell_{0}-1)/d) \sin(\pi/d) \ge \cos(\pi (\ell_{0}-1)/d).
\]
As $d > 1$, we have $d \ge 3$ so that
\[
\cos(\pi/d) \ge \cos(\pi/3) = \dfrac{1}{2}
\qquad \text{and} \quad
\sin(\pi/d) \le \sin(\pi/3) = \dfrac{\sqrt{3}}{2}.
\]
It suffices therefore to show
\[
1 + \dfrac{\cos(\pi (\ell_{0}-1)/d)}{2} \ge \cos(\pi (\ell_{0}-1)/d) + \dfrac{\sqrt{3}\,\sin(\pi (\ell_{0}-1)/d)}{2}
\]
or equivalently
\[
\dfrac{\cos(\pi (\ell_{0}-1)/d)}{2} + \dfrac{\sqrt{3}\,\sin(\pi (\ell_{0}-1)/d)}{2} \le 1.
\]
The left-hand side of this last inequality is
\[
\sin(\pi/6) \,\cos(\pi (\ell_{0}-1)/d) + \sin(\pi (\ell_{0}-1)/d) \,\cos(\pi/6)
= \sin\bigg( \dfrac{\pi}{6} + \dfrac{\pi (\ell_{0}-1)}{d} \bigg) \le 1,
\]
so we deduce that, with $d$ odd, we cannot have $\ell_{0}$ is even.

Next, we show that with $d$ odd, we cannot have that $\ell_{0}$ is odd by completing a similar analysis.  In this case, by taking the distinct values of $\zeta^{a_{u}}$ as left on the unit circle as possible and using the previous notation $\ell' = (\ell_{0}-1)/2$ and $d = 2d'-1$, we see that
\begin{align*}
\real \bigg( -\sum_{u=0}^{\ell_{0}-1} \zeta^{a_{u}} \bigg)
&\le - \sum_{j=0}^{\ell'-1} \big( e^{i 2\pi (d'+j)/d} +  e^{-i 2\pi (d'+j)/d} \big) - \real \big(e^{i 2\pi (d'+\ell')/d}\big) \\[5pt]
&= - \sum_{j=0}^{\ell'-1} \big( e^{i 2\pi (d'+j)/d} +  e^{-i 2\pi (d'+j)/d} \big) - \cos\big( 2\pi (d'+\ell')/d \big) \\[5pt]
&= \dfrac{\sin(2\pi \ell'/d)}{\sin(\pi/d)} - \cos\big( 2\pi (d'+\ell')/d \big) \\[5pt]
&= \dfrac{\sin(\pi (\ell_{0}-1)/d)}{\sin(\pi/d)} - \cos\big( \pi (d+\ell_{0})/d \big) \\[5pt]
&= \dfrac{\sin(\pi (\ell_{0}-1)/d)}{\sin(\pi/d)} + \cos\big( \pi \ell_{0}/d \big).
\end{align*}
From \eqref{mnequatdboundproof} and \eqref{realpartleftsum}, we deduce that
\[
\dfrac{\sin(\pi \ell_{0}/d)}{\sin(\pi/d)} \le \dfrac{\sin(\pi (\ell_{0}-1)/d)}{\sin(\pi/d)} + \cos\big( \pi \ell_{0}/d \big).
\]
As $\sin(\pi (\ell_{0}-1)/d) = \sin(\pi \ell_{0}/d) \,\cos(\pi /d) - \sin(\pi /d) \,\cos(\pi \ell_{0}/d)$, we obtain
\[
\dfrac{\sin(\pi \ell_{0}/d)}{\sin(\pi/d)} \le \dfrac{\sin(\pi \ell_{0}/d) \,\cos(\pi /d)}{\sin(\pi/d)},
\]
which is impossible since $d > \ell_{0}$ implies the trigonometric values shown are positive and since $\cos(\pi /d) < 1$.  Thus, we see that if $d$ is odd, we cannot have $\ell_{0}$ odd.  Therefore, $d$ cannot be odd.

We consider now the case that $d$ is even, say $d = 2d'$.  In this case, we have that $\zeta^{d'} = -1$.  The analysis in this case is similar to how we treated the left sum in \eqref{mnequatdboundproof}.  In fact, we can see by comparing real and then imaginary parts that, for \eqref{mnequatdboundproof} to hold, we must have
\[
\{ \zeta^{a_{u}} :  0 \le u \le \ell_{0}-1 \} = \{ -\zeta^{v-\ell'} :  0 \le v \le \ell_{0}-1 \}.
\]
Equivalently, since $\zeta^{d'} = -1$, we obtain that modulo $d$, the $\ell_{0}$ distinct values of the $a_{u}$ in some order are the same as the $\ell_{0}$ distinct values of $v-\ell'+d'$.
Recalling 
\[
a_{u} \equiv au - \ell' \equiv pju - \ell' \equiv p q^{-1} u - \ell' \pmod{d},
\]
where $q^{-1}$ refers to the inverse of $q$ modulo $d$, we see that the set of the $\ell_{0}$ distinct values of $p q^{-1} u$ modulo $d$ is the same as the set of the $\ell_{0}$ distinct values of $v+d'$ modulo $d$.  Furthermore, if these sets of values are the same, then necessarily \eqref{mnequatdboundproof} holds and \eqref{contradtobe} will follow as well as that $\Phi_{d}(x)$ divides $F(x)$.  

To formalize this discussion better, for arbitrary integers $A$ and $D$ with $D > 0$, we define $A \mmod D$ to be the unique integer $B \in [0,D)$ such that $B \equiv A \pmod{D}$.  With this notation, we have reduced solving \eqref{contradtobe} with $d = 2d'$ to determining when
\begin{equation}\label{setseqmodd}
\{ (p q^{-1} u) \mmod d : 0 \le u \le \ell_{0}-1 \} = \{ (v + d') \mmod d : 0 \le v \le \ell_{0}-1 \}.
\end{equation}
As an immediate consequence, we see that $\ell_{0} \ne 1$, implying $d \nmid (\ell-1)$.
Also, $d'$ belongs to the set on the right and, hence, to the set on the left.  Thus, $p q^{-1} u_{0} \equiv d' \pmod{d}$ for some $u_{0} \in (0,\ell_{0}-1]$.  Recall $d' \mid d$ and $p$ and $q^{-1}$ are relatively prime to $d$.  Since $d \mid (p q^{-1} u_{0} - d')$, we deduce that $d' \mid u_{0}$.  As $2d' = d > \ell_{0} > u_{0}$, we obtain $u_{0} = d'$.  Since $u_{0} \le \ell_{0}-1$, we see then that $d' \le \ell_{0}-1$.  Thus, $d' < \ell_{0} < d = 2d'$.

Since $d$ is even and $\ell$ is an odd prime such that $d \mid (\ell-\ell_{0})$, we know that $\ell_{0}$ is odd.  Recall $\ell_{0} \ne 1$.  
Therefore, there are positive integers $s_{1}$ and $s_{2}$ for which 
\[
2^{s_{1}} \ediv (\ell_{0}-1)
\qquad \text{and} \qquad
2^{s_{2}} \ediv d.
\]
Let $s = \min\{ s_{1}, s_{2} \}$.  Observe that since $2^{s} \mid d$, if some integer $c$ is divisible by $2^{s}$, then $c \mmod d$ is an integer that is divisible by $2^{s}$.  Hence, by taking $u \in \{ 0, 2^{s}, \ldots, \ell_{0}-1 \}$ (multiples of $2^{s}$), we see that there are $(\ell_{0}-1)/2^{s} + 1$ multiples of $2^{s}$ on the left-hand side of \eqref{setseqmodd}.  Thus, there must be $(\ell_{0}-1)/2^{s} + 1$ multiples of $2^{s}$ on the right-hand side of \eqref{setseqmodd}.  Since the choices of $v$ for these must be spaced $2^{s}$ apart, this is only possible if 
\[
2^{s} \mid d'
\qquad \text{and, hence,} \qquad
2^{s+1} \mid d.
\]  
We deduce then that $s = s_{1}$.  
Since $(\ell-1) - (\ell_{0}-1)$ is divisible by $d$ and $2^{s} \ediv (\ell_{0}-1)$, we deduce that 
\[
2^{s} \ediv (\ell-1).
\]  
Hence, for $s$ such that $2^{s} \ediv (\ell-1)$, we have $2^{s+1} \mid d$.  In particular, we now know $d \ge 4$ and $d' \ge 2$.
We define
\[
a' = a \mmod d = pq^{-1} \mmod d.
\]  
Since $\gcd(a,d) = 1$, we have $\gcd(a',d) = 1$.  
Recall $d' < \ell_{0} < d = 2d'$.  

Suppose $\ell_{0} = d'+1$.  Then the right side of \eqref{setseqmodd} are the $d'$ integers $d', d'+1, \ldots, d-1$ together with $0$.  
The left side includes $0$ and the integers $a' u \mmod d$ for $1 \le u \le d'$.  
In particular, we must have $a' \in [d',d-1]$.  Since $\gcd(a',d) = 1$, we know $a' \ne d'$.  
Let $a'' = d-a'$.  Since $a' \in [d'+1,d-1]$, we have $a'' \in [1,d'-1]$.  Also, $\gcd(a'',d) = 1$. 

Assume $a'' \ge 2$.  Then the number $u' = \lceil d'/a'' \rceil$ is at most $(d'/2)+1 \le d'$.  In this case, since $\gcd(a'',d) = 1$, we have $u' a'' \ge d'+1$.  Also, $u' a'' < ((d'/a'')+1) a'' = d' + a'' \le 2d' - 1$.  As a consequence, taking $u = u'$, we see that 
\[
a' u \mmod d = (d-a'') u' \mmod d = d - a'' u' \in [1,d'-1]. 
\]
This is impossible as \eqref{setseqmodd} implies $a' u \mmod d \in [d', d-1]$.  Hence, we deduce that $a'' = 1$.

Since $a'' = d-a'$, we obtain $a' = d-1$.  As $a' \equiv pq^{-1} \pmod{d}$, we obtain $p+q$ is divisible by $d$.  Recall we are in the case that $\ell_{0} = d'+1$.  Since $d$ and hence $d'$ divides $\ell-\ell_{0} = \ell-1-d'$, we deduce that $d' \mid (\ell-1)$.  Summarizing, for $\ell_{0} = d'+1$, we have a positive integer $s$ such that $2^{s} \ediv (\ell-1)$, $d = 2^{s+1} d_{o}$ for some odd divisor $d_{o}$ of $\ell-1$ (since $d' = 2^{s} d_{o}$ divides $\ell-1$), and $q \equiv -p \pmod{d}$.  In other words, in this case, Theorem~\ref{kpq:mainthm}~\ref{mainthmfirstitem} holds.
We are now left with considering $d' + 1 < \ell_{0} < d = 2d'$.  

Suppose now $\ell_{0} = d-1$.  
As $4 \mid d$ and $\ell \equiv \ell_{0} \equiv -1 \pmod{d}$, we deduce that $\ell \equiv 3 \pmod{4}$.  Observe that $d$ is a multiple of $4$ that divides $\ell+1$.  Recall $\gcd(pq,d) = 1$ and $d = 2d'$.  To show that \ref{mainthmseconditem} holds, we need only show that $q \equiv p + d' \pmod{d}$.  We will establish that $a' = pq^{-1} \mmod d = d'+1$.  Once this has been done, \ref{mainthmseconditem} follows, as $q \equiv p + d' \pmod{d}$ if and only if 
\[
pq^{-1} \mmod d = (1-d'q^{-1}) \mmod d = (1-d') \mmod d = d' + 1,
\]
where we have used that $q^{-1} \mmod d$ is relatively prime to $d$ and therefore necessarily odd.

With $\ell_{0} = d-1$, we see that the right-hand side of \eqref{setseqmodd} contains every integer in $\{ 0, 1, \ldots, d-1 \}$ except $(d-1+d') \mmod d = d'-1$.  Comparing with the left-hand side of \eqref{setseqmodd}, we deduce that for $u = d-1$, we must have $p q^{-1} u \mmod d = d'-1$.  In other words, we see that
\[
d'-1 \equiv p q^{-1} (d-1) \equiv - p q^{-1} \pmod{d}.
\]
Hence, we obtain
\[
a' = pq^{-1} \mmod d \equiv (-d'+1) \mmod d = d' + 1,
\]
and Theorem~\ref{kpq:mainthm}~\ref{mainthmseconditem} follows.

To finish the proof, we need to justify that we cannot have $d' + 2 \le \ell_{0} \le d-2$. 
With the plan of obtaining a contradiction, we assume $d' + 2 \le \ell_{0} \le d-2$. 
Since $d' = d/2$, we are done unless $(d/2)+2 \le d - 2$ which implies $d \ge 8$ and, hence, $d' \ge 4$ and $\ell_{0} \ge 6$.
Recall $a' = pq^{-1} \mmod d$.  We work instead with 
\[
b = (-a') \mmod d = (-pq^{-1}) \mmod d.
\]
The residue classes represented modulo $d$ on each side of \eqref{setseqmodd} are distinct, and the idea is to work with $b$ to say something about the residue classes modulo $d$ that are not represented there.  As $\gcd(b,d) = 1$, comparing those not represented on both sides, we obtain
\begin{align*}
\{ (bu) \mmod d : 1 \le u \le d-\ell_{0} \} 
&= \{ (-a'u) \mmod d : 1 \le u \le d-\ell_{0} \} \\
&= \{ (a'u) \mmod d : \ell_{0} \le u \le d-1 \} \\
&= \{ (v + d') \mmod d : \ell_{0} \le v \le d-1 \} \\
&= \{ (v - d') \mmod d : \ell_{0} \le v \le d-1 \} \\
&= \{ \ell_{0}-d', \ell_{0}-d'+1, \ldots, d'-1 \}.
\end{align*}
We successively consider $u$ on the left noting that we deduce from the above that
\begin{equation}\label{bueq}
\ell_{0} - d' \le (bu) \mmod d \le d'-1, 
\qquad \text{for } 1 \le u \le d-\ell_{0}.
\end{equation}
We see from $u = 1$ and $0 \le b \le d-1$ that $\ell_{0} - d' \le b \le d'-1$.  
By way of induction, suppose we know for some integer $u = [1, d-\ell_{0}-1]$ that $b \le (d'-1)/u$, as is the case for $u = 1$.  Then
\[
b(u+1) \le \dfrac{(u+1)(d'-1)}{u} \le 2(d'-1) < d.
\]
Therefore, $b(u+1) \mmod d = b(u+1)$, and we deduce from \eqref{bueq} that
$b(u+1) \le d'-1$ so that $b \le (d'-1)/(u+1)$.  
This finishes our induction, and we obtain $b \le (d'-1)/u$ for all $u = [1, d-\ell_{0}]$. 
In particular, with $u = d-\ell_{0}$, we see that
\[
b \le \dfrac{d'-1}{d-\ell_{0}}.
\]
Recall when we took $u = 1$ in \eqref{bueq} we obtained $b \ge \ell_{0} - d'$.  Therefore, we see that
\[
\ell_{0} - d' \le \dfrac{d'-1}{d-\ell_{0}},
\]
which, given $d = 2d' > 0$, can be written as
\[
d^{2} - (3\ell_{0}-1)d + 2\ell_{0}^{2}-2 \ge 0.
\]
The left side of this inequality factors, and we have
\begin{equation}\label{dell0:ineq}
(d - 1 - \ell_{0})(d + 2 - 2\ell_{0}) \ge 0.
\end{equation}
Observe that $d' + 2 \le \ell_{0} \le d-2$ implies
\[
d + 2 - 2\ell_{0} < 2d' + 4 - 2\ell_{0} \le 0
\qquad \text{and} \qquad
d - 1 - \ell_{0} > d - 2 - \ell_{0} \ge 0,
\]
contradicting \eqref{dell0:ineq}.  It is worth noting that $\ell_{0} = d'+1$ and $\ell_{0} = d-1$ previously considered lead to $0$ on the left-hand side of \eqref{dell0:ineq}, so no contradiction would come from \eqref{dell0:ineq} in those cases.

We have finished proving that \ref{mainthmfirstitem} or \ref{mainthmseconditem} must hold.  In the case of \ref{mainthmfirstitem}, we see that $d \le 2(\ell-1) = 2\ell-2$.  In the case of \ref{mainthmseconditem}, we obtain $d \le \ell+1 \le 2\ell - 2$ provided $\ell \ge 3$.  In this case, though, $\ell \equiv 3 \pmod{4}$, so we have $\ell \ge 3$.  Thus, the proof of Lemma~\ref{dbound} is now complete.
\end{proof}

We now turn to showing that in the case of \ref{mainthmfirstitem} and \ref{mainthmseconditem} in Lemma~\ref{dbound}, the cyclotomic polynomial $\Phi_{d}(x)$ divides $\Phi_{\ell p}(x) + \Phi_{\ell q}(x)$.

\begin{lemma}\label{largestcyclfactor}
Let $\ell$ be an odd prime, and let $p$ and $q$ be odd primes such that $p > q > \ell$.
Let $s \in \mathbb Z^{+}$ such that $2^{s} \ediv (\ell-1)$, 
and let $d_{o}$ be an odd divisor of $\ell-1$. 
Let $d = 2^{s+1} d_{o}$, and suppose $q \equiv -p \pmod{d}$.  Then $\Phi_{d}(x)$ divides $\Phi_{\ell p}(x) + \Phi_{\ell q}(x)$.
\end{lemma}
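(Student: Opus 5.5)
The plan is to reverse the reduction in the proof of Lemma~\ref{dbound}. There we showed that $\Phi_{d}(x) \mid F(x)$ with $d = 2d'$ even is equivalent to the set equality \eqref{setseqmodd}. So it suffices to check \eqref{setseqmodd} for the given $d$, confirm that $\gcd(pq,d)=1$, and then pass from $F(x)$ to $f(x)$ using Corollary~\ref{fiffF} (here $k = \ell$).

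\textbf{Setup.} Since $d = 2^{s+1}d_{o}$ with $d_{o}$ odd, we have $d' = d/2 = 2^{s}d_{o}$. This divides $\ell - 1$, because $2^{s} \ediv (\ell-1)$ and $d_{o} \mid (\ell-1)$ is odd. On the other hand, $d \nmid (\ell-1)$, since $2^{s+1} \nmid (\ell-1)$. Hence $\ell \equiv 1 + d' \pmod{d}$, that is, $\ell_{0} = d'+1$. Also $\gcd(pq,d) = 1$: $d$ has only the prime factor $2$ and prime factors of $\ell-1$, all of which are less than $\ell < q < p$. Finally, $d \ne \ell$ because $d$ is even, and $d \ne 1$.

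\textbf{Verifying \eqref{setseqmodd}.} The hypothesis $q \equiv -p \pmod d$ gives $pq^{-1} \equiv -1 \pmod d$. The left side of \eqref{setseqmodd} is then $\{(-u) \mmod d : 0 \le u \le d'\} = \{0\} \cup \{d', d'+1, \ldots, d-1\}$. The right side is $\{(v+d') \mmod d : 0 \le v \le d'\} = \{d', \ldots, d-1, 0\}$. These sets agree.

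\textbf{Concluding.} To be safe, I will verify $F(\zeta^{j}) = 0$ directly for every $j$ coprime to $d$, rather than only citing the earlier equivalence. Write $\omega = \zeta^{j}$, a primitive $d$th root of unity, and use $F(x) = \Phi_{\ell}(x^{p}) + \Phi_{\ell}(x^{q})$.

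Since $d \mid (p+q)$, we have $\omega^{q} = \omega^{-p}$. Put $\eta = \omega^{p}$, which is again a primitive $d$th root of unity because $\gcd(p,d)=1$. Then $F(\omega) = \sum_{u=0}^{\ell-1} (\eta^{u} + \eta^{-u})$.

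Because $\ell \equiv d'+1 \pmod d$, full periods of $d$ consecutive powers sum to zero, so this reduces to $\sum_{u=0}^{d'} (\eta^{u} + \eta^{-u})$. The terms with $u=0$ and $u = d'$ contribute $2 + 2\eta^{d'} = 2 - 2 = 0$. The remaining terms are $\sum_{u=1}^{d'-1}(\eta^{u} + \eta^{-u})$. This is the sum of all $d$th roots of unity except $1$ and $-1$, which equals $0 - 1 - (-1) = 0$.

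Hence $F(\omega) = 0$ for all primitive $d$th roots $\omega$, so $\Phi_{d}(x) \mid F(x)$. Since $d \ne \ell = k$, Corollary~\ref{fiffF} gives $\Phi_{d}(x) \mid f(x) = \Phi_{\ell p}(x) + \Phi_{\ell q}(x)$.

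\textbf{Main obstacle.} The main difficulty is bookkeeping: establishing $\ell_{0} = d'+1$ and $\gcd(pq,d)=1$ from the hypotheses. The direct root-of-unity computation above avoids depending on the exact form of the equivalence in the proof of Lemma~\ref{dbound}.
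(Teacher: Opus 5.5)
Your proof is correct and follows the paper's route: you reduce via Corollary~\ref{fiffF} to showing $\Phi_{d}(x) \mid F(x)$, use $q \equiv -p \pmod{d}$ to rewrite $F$ at a primitive $d$th root of unity as $\Phi_{\ell}(\eta) + \Phi_{\ell}(\eta^{-1})$, and then exploit $\ell - 1 \equiv d/2 \pmod{d}$. The only difference is how the last step is finished: the paper factors $\Phi_{\ell}(\eta)+\Phi_{\ell}(\eta^{-1}) = \Phi_{\ell}(\eta)\,\eta^{1-\ell}(\eta^{\ell-1}+1)$ and uses $\eta^{\ell-1} = -1$, whereas you truncate the geometric sums modulo $d$ and cancel the terms directly (and your check of \eqref{setseqmodd} is not needed for that).
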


\begin{proof}
By Corollary~\ref{fiffF} with $k = \ell$, it suffices to show under the conditions of the lemma that $\Phi_{d}(x)$ divides $F(x) = \Phi_{\ell}(x^{p}) + \Phi_{\ell}(x^{q})$.  
Let $\zeta = \zeta_{d}$.  Since $p$ is an odd prime and $p > \ell$, we have $\gcd(p, d) = 1$. Therefore, there is a $j \in \mathbb Z$ such that $p j \equiv 1 \pmod{d}$.  Necessarily, we have $\gcd(j, d) = 1$ so that $\zeta^{j}$ is a root of $\Phi_{d}(x)$, and it suffices to show $F(\zeta^{j}) = 0$.  
Now, $q \equiv -p \pmod{d}$ implies $\zeta^{q} = \zeta^{-p}$.  Therefore, by the definition of $j$, we obtain 
\[
F(\zeta^{j}) = \Phi_{\ell}(\zeta^{pj}) + \Phi_{\ell}(\zeta^{-pj}) 
= \Phi_{\ell}(\zeta) + \Phi_{\ell}(\zeta^{-1}).
\]
Since $\Phi_{\ell}(x)$ is reciprocal and of degree $\ell-1$, we have $\Phi_{\ell}(x^{-1}) = x^{-(\ell-1)}\Phi_{\ell}(x)$ so that
\[
F(\zeta^{j}) = \Phi_{\ell}(\zeta) + \zeta^{1-\ell}\Phi_{\ell}(\zeta) = \Phi_{\ell}(\zeta) \zeta^{1-\ell} \big( \zeta^{\ell-1} + 1 \big).
\]
The conditions in the lemma imply $d/2$ divides $\ell-1$ and $(\ell-1)/(d/2)$ is odd.  Hence, we see that $x^{d/2} + 1$ is a factor of $x^{\ell-1} + 1$.  As $x^{d}-1 = (x^{d/2} - 1)(x^{d/2} + 1)$, we deduce that $\Phi_{d}(x) \mid (x^{d/2} + 1)$, implying $\zeta^{\ell-1} + 1 = 0$.  Thus, we obtain $F(\zeta^{j}) = 0$, and the result follows.
\end{proof}

\begin{lemma}\label{othercyclfactors}
Let $\ell \equiv 3 \pmod{4}$ be a prime, and let $p$ and $q$ be odd primes such that $p > q > \ell$.  Let $d$ be a multiple of $4$ that divides $\ell+1$, and suppose $q \equiv p + (d/2) \pmod{d}$.  Then $\Phi_{d}(x)$ divides $\Phi_{\ell p}(x) + \Phi_{\ell q}(x)$.
\end{lemma}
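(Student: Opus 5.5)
Following the proof of Lemma~\ref{largestcyclfactor}, by Corollary~\ref{fiffF} with $k=\ell$ it suffices to show that $\Phi_{d}(x)$ divides $F(x)=\Phi_{\ell}(x^{p})+\Phi_{\ell}(x^{q})$. Since $d\mid(\ell+1)$ and $p,q>\ell$ are primes, $\gcd(p,d)=1$. Choose $j$ with $pj\equiv 1 \pmod{d}$, so $\gcd(j,d)=1$ and $\zeta^{j}$ is a root of $\Phi_{d}(x)$, where $\zeta=\zeta_{d}$. It then suffices to check $F(\zeta^{j})=0$.

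Write $d=2d'$, so $\zeta^{d'}=-1$. The hypothesis $q\equiv p+d'\pmod d$ gives $qj\equiv 1+d'j\pmod d$. Since $d$ is even and $\gcd(j,d)=1$, $j$ is odd, so $d'j\equiv d'\pmod d$ and $\zeta^{qj}=\zeta^{1+d'}=-\zeta$. Hence
\[
F(\zeta^{j})=\Phi_{\ell}(\zeta)+\Phi_{\ell}(-\zeta)=\sum_{u=0}^{\ell-1}\zeta^{u}+\sum_{u=0}^{\ell-1}(-1)^{u}\zeta^{u}=2\sum_{w=0}^{(\ell-1)/2}\zeta^{2w}.
\]
The odd powers cancel, and the even powers $u=2w$ with $0\le w\le(\ell-1)/2$ survive.

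It remains to show $\sum_{w=0}^{(\ell-1)/2}\zeta^{2w}=0$. This is a geometric sum in $\zeta^{2}$, a primitive $d'$-th root of unity with $d'\ge 2$ since $4\mid d$, so $\zeta^{2}\ne 1$ and the sum equals $(\zeta^{\ell+1}-1)/(\zeta^{2}-1)$. Since $d\mid(\ell+1)$, we get $\zeta^{\ell+1}=1$, so the sum vanishes and $F(\zeta^{j})=0$.

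The only delicate point is the reduction $d'j\equiv d'\pmod d$, which uses the oddness of $j$. Note that the hypothesis $\ell\equiv 3\pmod 4$ is only needed for a multiple of $4$ dividing $\ell+1$ to exist; it plays no further role in the argument.
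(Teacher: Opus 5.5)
Your proof is correct and follows essentially the same route as the paper. You reduce to $\Phi_d \mid F$ via Corollary~\ref{fiffF} (legitimate since $d$ is even, so $d\ne\ell$), use $\zeta^{d/2}=-1$ to turn the value of $F$ at a primitive $d$-th root of unity into $\Phi_\ell(y)+\Phi_\ell(-y)=2(y^{\ell+1}-1)/(y^{2}-1)$, and conclude from $d\mid \ell+1$. The only cosmetic difference is that the paper evaluates directly at $\zeta$ with $y=\zeta^{p}$, noting $d\nmid 2p$ so the denominator is nonzero, whereas your normalization $pj\equiv 1 \pmod d$ gives $y=\zeta$ at the cost of your small parity check on $j$.
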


\begin{proof}
As in the proof of Lemma~\ref{largestcyclfactor}, we want to show $\Phi_{d}(x)$ divides $F(x) = \Phi_{\ell}(x^{p}) + \Phi_{\ell}(x^{q})$, we take $\zeta = \zeta_{d}$, and we use that $\gcd(p, d) = 1$.  Observe that $\zeta^{q} = \zeta^{d/2}\zeta^{p} = -\zeta^{p}$ and 
\[
\Phi_{\ell}(x) + \Phi_{\ell}(-x) = 2\big( x^{\ell-1} + x^{\ell-3} + \cdots + x^{2} + 1 \big)
= \dfrac{2 (x^{\ell+1} - 1)}{x^{2}-1}.
\]
Since $d$ divides $(\ell+1)p$ but not $2p$, we deduce
\[
F(\zeta) = \Phi_{\ell}(\zeta^{p}) + \Phi_{\ell}(\zeta^{q}) 
= \Phi_{\ell}(\zeta^{p}) + \Phi_{\ell}(-\zeta^{p})
= \dfrac{2 (\zeta^{(\ell+1)p} - 1)}{\zeta^{2p}-1} = 0.
\]
The lemma follows.
\end{proof}

We now address the case that $k = \ell^{t}$ for some integer $t \ge 1$.  Once the following lemma is established, Theorem~\ref{kpq:mainthm} follows.

\begin{lemma}\label{Phi_{ell}_suffices}
Let $p$, $q$ and $\ell$ be odd primes such that $p > q > \ell$, and let $k = \ell^{t}$ for some integer $t \ge 1$.  Let $D$ be a positive integer, and define $j$ by $D = d \ell^{j}$, where $j \ge 0$ is an integer, $d \in \mathbb Z^{+}$ and $\ell \nmid d$.  Then $\Phi_{D}(x)$ divides $\Phi_{k p}(x) + \Phi_{k q}(x)$ if and only if $j \le t-1$ and $\Phi_{d}(x)$ divides $\Phi_{\ell p}(x) + \Phi_{\ell q}(x)$.
\end{lemma}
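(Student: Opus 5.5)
We work throughout with $F_{t}(x) = \Phi_{k}(x^{p}) + \Phi_{k}(x^{q}) = \Phi_{\ell}\big(x^{p\ell^{t-1}}\big) + \Phi_{\ell}\big(x^{q\ell^{t-1}}\big)$ from Lemma~\ref{lemF(x)}, together with Corollary~\ref{fiffF}. The latter says that $\Phi_{D}(x) \mid f(x)$ if and only if $D \ne k$ and $\Phi_{D}(x) \mid F_{t}(x)$. The same statement with $t=1$ says that $\Phi_{d}(x) \mid \Phi_{\ell p}(x)+\Phi_{\ell q}(x)$ if and only if $d \ne \ell$ and $\Phi_{d}(x) \mid F_{1}(x) = \Phi_{\ell}(x^{p})+\Phi_{\ell}(x^{q})$. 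Since $\ell \nmid d$, the condition $d \ne \ell$ is automatic. Since $F_{t}(x) = F_{1}\big(x^{\ell^{t-1}}\big)$, the question reduces to the following: for $\zeta$ a primitive $D$-th root of unity, when is $\zeta^{\ell^{t-1}}$ a root of $F_{1}$?

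Write $D = d\ell^{j}$ and let $\zeta$ be a primitive $D$-th root of unity. Then $\zeta^{\ell^{t-1}}$ is a primitive root of unity of order $d\ell^{\max(j-t+1,0)}$.

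\textbf{Case $j \le t-1$.} Here $\zeta^{\ell^{t-1}}$ is a primitive $d$-th root of unity. Since $\Phi_{D}$ and $\Phi_{d}$ are irreducible, we get $\Phi_{D} \mid F_{t}$ if and only if $\Phi_{d} \mid F_{1}$. Also $D \ne k$, because $D = k$ would force $d = 1$ and $j = t$. So in this case $\Phi_{D} \mid f$ if and only if $\Phi_{d} \mid \Phi_{\ell p}+\Phi_{\ell q}$.

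\textbf{Case $j \ge t$.} We must show $\Phi_{D} \nmid f$. Here $\zeta^{\ell^{t-1}}$ has order $d' = d\ell^{j-t+1}$, which is divisible by $\ell$. Suppose it were a root of $F_{1}$. Then $\Phi_{d'} \mid F_{1} = (\Phi_{\ell p}+\Phi_{\ell q})\Phi_{\ell}$. There are two possibilities.
\begin{enumerate}[label=(\alph*)]
\item $d' = \ell$. This gives $d = 1$ and $j = t$, so $D = k$, which is excluded by Corollary~\ref{fiffF}.
\item $d' \ne \ell$. Then $\Phi_{d'} \mid \Phi_{\ell p}+\Phi_{\ell q}$. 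By Lemma~\ref{dbound}, case \ref{mainthmfirstitem} or \ref{mainthmseconditem} holds for $d'$. In case \ref{mainthmfirstitem}, $d' = 2^{s+1}d_{o}$ with $d_{o} \mid \ell-1$; in case \ref{mainthmseconditem}, $d' \mid \ell+1$. Neither allows $\ell \mid d'$, a contradiction.
\end{enumerate}

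So the direction that could in principle fail is ruling out $j \ge t$, and it is settled entirely by the explicit description in Lemma~\ref{dbound}, which forbids $\ell$ dividing the order of any cyclotomic factor when $k = \ell$. There is one detail to check: Lemma~\ref{dbound} applies to any $d'$ with $\Phi_{d'} \mid \Phi_{\ell p}+\Phi_{\ell q}$, and its proof never assumes $\ell \nmid d'$. Indeed, it already deduces $d' \mid (p-q)(\ell-1)$ at the start, so $\ell \mid d'$ forces $\ell \mid p-q$. The rest of its argument goes through unchanged for such $d'$.
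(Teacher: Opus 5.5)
Your proof is correct and follows essentially the paper's argument: substitute $x \mapsto x^{\ell^{t-1}}$, note that $\zeta_D^{\ell^{t-1}}$ has order $d\ell^{\max(j-t+1,0)}$, and rule out $j \ge t$ by Lemma~\ref{dbound}, since both of its alternatives force $\ell \nmid d'$. The differences are cosmetic: for $j \ge t$ the paper uses $\Phi_{kp}(x)+\Phi_{kq}(x) = \Phi_{\ell p}(x^{\ell^{t-1}})+\Phi_{\ell q}(x^{\ell^{t-1}})$ directly, so it needs neither Corollary~\ref{fiffF} nor your separate subcase $d'=\ell$, and it proves the ``if'' direction via $\Phi_d(x^{\ell^{t-1}}) = \prod_{i=0}^{t-1}\Phi_{d\ell^{i}}(x)$, whereas your root-of-unity argument handles both directions at once.
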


\begin{proof}
Suppose first that $\Phi_{D}(x)$ divides $\Phi_{k p}(x) + \Phi_{k q}(x)$.  
Assume $j \ge t$.  Then Lemma~\ref{lem:cyclotomic-properties} implies $\Phi_{D}(x) = \Phi_{d \ell^{j}}(x)$ divides $\Phi_{k p}(x) + \Phi_{k q}(x) = \Phi_{\ell p}(x^{\ell^{t-1}}) + \Phi_{\ell q}(x^{\ell^{t-1}})$.  Thus, with $\zeta = \zeta_{D} = \zeta_{d \ell^{j}}$, we see that $\Phi_{D}(\zeta) = \Phi_{d \ell^{j}}(\zeta) = 0$ so that
\[
0 = \Phi_{k p}(\zeta) + \Phi_{k q}(\zeta) = \Phi_{\ell p}(\zeta^{\ell^{t-1}}) + \Phi_{\ell q}(\zeta^{\ell^{t-1}}),
\]implying $\zeta^{\ell^{t-1}} = \zeta_{d \ell^{j}}^{\ell^{t-1}} = \zeta_{d \ell^{j-t+1}}$ is a root of $\Phi_{\ell p}(x) + \Phi_{\ell q}(x)$.  In other words, $\Phi_{d'}(x)$ divides $f(x) = \Phi_{\ell p}(x) + \Phi_{\ell q}(x)$, where $d' = d \ell^{j-t+1}$.  Observe that, in this case where $j \ge t$, we have $d'$ is divisible by $\ell$.  This contradicts Lemma~\ref{dbound} as $\ell$ is an odd prime and in both \ref{mainthmfirstitem} and \ref{mainthmseconditem}, we have the only odd factors of $d$ are divisors of $\ell \pm 1$ and, hence, relatively prime to (and smaller than) $\ell$.  Therefore, $j \le t-1$.

Since $j \le t-1$ and $\ell \nmid d$, we see that
\[
\zeta_{d \ell^{j}}^{\ell^{t-1}} = \zeta_{d}^{\ell^{t-j-1}}
\]
is a primitive $d^{\rm \,th}$ root of unity.  By the definition of $F(x)$ and Lemma~\ref{lemF(x)}, we obtain
\[
0 = F(\zeta_{d \ell^{j}}) 
= \Phi_{\ell}\big( \zeta_{d\ell^{j}}^{p \ell^{t-1}} \big) + \Phi_{\ell}\big( \zeta_{d\ell^{j}}^{q \ell^{t-1}} \big)
= \Phi_{\ell}\big( \zeta_{d}^{p \ell^{t-j-1}} \big) + \Phi_{\ell}\big( \zeta_{d}^{q \ell^{t-j-1}} \big).
\]
Hence, the polynomial $\Phi_{\ell}(x^{p}) + \Phi_{\ell}(x^{q})$ is divisible by $\Phi_{d}(x)$.  
Note that $\Phi_{\ell}(x) \ne \Phi_{d}(x)$ since $d \ne \ell$, which follows from $\ell \nmid d$.  
Lemma~\ref{lemF(x)} with $k = \ell$ implies now that $\Phi_{d}(x)$ divides $\Phi_{\ell p}(x) + \Phi_{\ell q}(x)$.  This completes the proof of the ``only if" part of the lemma.

We suppose now that $j \le t-1$ and $\Phi_{d}(x)$ divides $\Phi_{\ell p}(x) + \Phi_{\ell q}(x)$.
Since $\Phi_{d}(x)$ divides $\Phi_{\ell p}(x) + \Phi_{\ell q}(x)$, we see that $\Phi_{d}(x^{\ell^{t-1}})$ divides $\Phi_{\ell p}(x^{\ell^{t-1}}) + \Phi_{\ell q}(x^{\ell^{t-1}})$.  By Lemma~\ref{lem:cyclotomic-properties}, the latter polynomial is $\Phi_{k p}(x) + \Phi_{k q}(x)$.  We continue our use of Lemma~\ref{lem:cyclotomic-properties}.  Since $\ell \nmid d$, we also have $\Phi_{d}(x^{\ell}) = \Phi_{d}(x) \,\Phi_{d\ell}(x)$ so that
\[
\Phi_{d}(x^{\ell^{t-1}}) = \Phi_{d}(x^{\ell^{t-2}}) \,\Phi_{d\ell}(x^{\ell^{t-2}}) 
= \Phi_{d}(x^{\ell^{t-2}}) \,\Phi_{d\ell^{t-1}}(x).
\]
By induction, we obtain
\[
\Phi_{d}(x^{\ell^{t-1}}) 
= \Phi_{d}(x) \,\Phi_{d\ell}(x) \cdots \,\Phi_{d\ell^{t-2}}(x)\,\Phi_{d\ell^{t-1}}(x).
\]
Since $\Phi_{d}(x^{\ell^{t-1}})$ divides $\Phi_{k p}(x) + \Phi_{k q}(x)$ and $j \le t-1$, we deduce that $\Phi_{D}(x) = \Phi_{d \ell^{j}}(x)$ divides $\Phi_{k p}(x) + \Phi_{k q}(x)$, completing the proof of the ``if" part of the lemma.
\end{proof}

\section{A density result for the irreducibility of \boldmath{$\Phi_{\ell p}(x) + \Phi_{\ell q}(x)$}}

We turn now to more information we can obtain on the factorization of $\Phi_{\ell p}(x) + \Phi_{\ell q}(x)$ when $p$, $q$ and $\ell$ are primes with $p > q > \ell$.  We begin with the following corollary to Theorem~\ref{kpq:mainthm}. 

\begin{corollary}\label{kpqmainthmcor}
Let $p$, $q$ and $\ell$ be primes satisfying $p > q > \ell$.  If $\Phi_{\ell p}(x) + \Phi_{\ell q}(x)$ has a cyclotomic factor, then it has a cyclotomic factor of the form $\Phi_{2^{j}}(x)$ where $j \ge 2$.  Furthermore, $\Phi_{2^{j}}(x)$ divides $\Phi_{\ell p}(x) + \Phi_{\ell q}(x)$ if and only if one of the following occurs.
\begin{enumerate}[label=(\roman*)]
\item
We have $2^{j-1} \ediv (\ell-1)$ and $q \equiv -p \pmod{2^{j}}$. \label{kpqmainthmcorfirstitem}
\item
We have $j \ge 3$, $2^{j} \mid (\ell+1)$ and $q \equiv p + 2^{j-1} {\hskip -2pt}\pmod{2^{j}}$. \label{kpqmainthmcorseconditem}
\end{enumerate}
\end{corollary}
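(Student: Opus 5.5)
The plan is to specialize Theorem~\ref{kpq:mainthm} to $t=1$, so $k=\ell$. If $\ell=2$, Corollary~\ref{pq2:cor} shows there are no cyclotomic factors. In that case the first claim is vacuous. For the second claim, condition \ref{kpqmainthmcorfirstitem} would need $2^{j-1}\ediv 1$ with $j\ge 2$, and condition \ref{kpqmainthmcorseconditem} would need $2^{j}\mid 3$; both are impossible, so the equivalence holds. We may therefore assume $\ell$ is odd. With $t=1$, the condition $j\le t-1$ in Theorem~\ref{kpq:mainthm} forces $\ell\nmid D$. Hence $\Phi_{D}(x)$ divides $\Phi_{\ell p}(x)+\Phi_{\ell q}(x)$ exactly when $D=d$ satisfies \ref{mainthmfirstitem} or \ref{mainthmseconditem}.

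For the first assertion, suppose $\Phi_{d}(x)$ is a cyclotomic factor.

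\emph{Case \ref{mainthmfirstitem}.} Here $d=2^{s+1}d_{o}$ with $2^{s}\ediv(\ell-1)$ and $q\equiv -p\pmod d$. Taking $d_{o}=1$ gives $d^{*}=2^{s+1}$. Since $2^{s+1}\mid d$, the congruence $q\equiv -p\pmod{2^{s+1}}$ holds, so \ref{mainthmfirstitem} holds for $d^{*}$. Thus $\Phi_{2^{s+1}}(x)$ is a factor, and $s+1\ge 2$.

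\emph{Case \ref{mainthmseconditem}.} Here $\ell\equiv 3\pmod 4$ and $4\mid d\mid \ell+1$ with $q\equiv p+d/2\pmod d$. Write $d=2^{j}d_{o}$ with $d_{o}$ odd and $j\ge 2$. I would take $d^{*}=2^{j}$. Then $4\mid d^{*}\mid \ell+1$. Reducing the congruence modulo $2^{j}$ gives $q\equiv p+2^{j-1}d_{o}\equiv p+2^{j-1}\pmod{2^{j}}$, since $d_{o}$ is odd. So \ref{mainthmseconditem} holds for $d^{*}$.

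For the characterization, set $d=2^{j}$ with $j\ge 2$; here $\ell\nmid d$ because $\ell$ is odd.

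\emph{Condition \ref{mainthmfirstitem} for $d=2^{j}$.} This requires $d_{o}=1$ and $j=s+1$. So it is equivalent to $2^{j-1}\ediv(\ell-1)$ together with $q\equiv -p\pmod{2^{j}}$, which is \ref{kpqmainthmcorfirstitem}. Note that $s\ge 1$ is automatic because $\ell$ is odd.

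\emph{Condition \ref{mainthmseconditem} for $d=2^{j}$.} This requires $\ell\equiv 3\pmod 4$, $2^{j}\mid \ell+1$ and $q\equiv p+2^{j-1}\pmod{2^{j}}$. The one subtlety is matching this with the requirement $j\ge 3$ in \ref{kpqmainthmcorseconditem}.

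\emph{When $j\ge 3$.} The divisibility $2^{j}\mid\ell+1$ already forces $\ell\equiv 3\pmod 4$, so the extra hypothesis is redundant and \ref{mainthmseconditem} is equivalent to \ref{kpqmainthmcorseconditem}.

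\emph{When $j=2$.} Condition \ref{mainthmseconditem} reads $\ell\equiv 3\pmod 4$ and $q\equiv p+2\pmod 4$. Since $p$ and $q$ are odd, this means $q\equiv -p\pmod 4$. Also, $\ell\equiv 3\pmod 4$ means exactly $2\ediv(\ell-1)$. So for $j=2$, condition \ref{mainthmseconditem} coincides with \ref{kpqmainthmcorfirstitem}, and it is harmless to exclude $j=2$ from \ref{kpqmainthmcorseconditem}.

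Combining these gives the stated equivalence. The main obstacle is just this bookkeeping, especially the $j=2$ overlap; everything else is a direct restriction of Theorem~\ref{kpq:mainthm}.
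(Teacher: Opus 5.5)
Your proposal is correct and follows essentially the same route as the paper: handle $\ell=2$ via Corollary~\ref{pq2:cor}, specialize Theorem~\ref{kpq:mainthm} to $t=1$, replace a general cyclotomic factor $\Phi_{d}(x)$ by $\Phi_{2^{j}}(x)$ with $2^{j} \ediv d$, and absorb the $j=2$ case of Theorem~\ref{kpq:mainthm}~\ref{mainthmseconditem} into condition \ref{kpqmainthmcorfirstitem} of the corollary using $q \equiv p+2 \equiv -p \pmod{4}$ and $2 \ediv (\ell-1)$. The only point the paper states that you leave implicit is that, for $\ell=2$, irreducibility of $\Phi_{2p}(x)+\Phi_{2q}(x)$ excludes cyclotomic factors because this polynomial has constant term $2$ and so is not itself cyclotomic; your explicit check that both conditions fail when $\ell=2$ and $j \ge 2$ is a welcome addition.
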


\begin{proof}
Let $f(x) = \Phi_{\ell p}(x) + \Phi_{\ell q}(x)$.  The polynomial $f(x)$ is monic with constant term $2$, and, 
by Corollary~\ref{pq2:cor}, we know that $f(x)$ is irreducible if $\ell = 2$.  We deduce that $f(x)$ has no cyclotomic divisors if $\ell = 2$, and Corollary~\ref{kpqmainthmcor} follows in this case.  We consider now odd $\ell$.  

In Theorem~\ref{kpq:mainthm}, we take $k = \ell$.  Then $t = 1$ and $D = d$. 
For the case of Theorem~\ref{kpq:mainthm}~(i), we set $s$ to be the positive integer satisfying $2^{s} \ediv (\ell-1)$.
Suppose $f(x)$ has some cyclotomic divisor $\Phi_{d}(x)$ as in Theorem~\ref{kpq:mainthm}~(i).  Then, with $j = s+1$, we see that $2^{j-1} \ediv (\ell-1)$, $2^{j} \ediv d$ and $q \equiv -p \pmod{d}$.  In this case, since $2^{j} \mid d$, we obtain $q \equiv -p \pmod{2^{j}}$.  
For such primes $p$ and $q$, Theorem~\ref{kpq:mainthm}~(i) implies that $f(x)$ is divisible by $\Phi_{2^{j}}(x)$.  Thus, whenever a cyclotomic divisor $\Phi_{d}(x)$ divides $f(x)$ in Theorem~\ref{kpq:mainthm}~(i), we must also have $\Phi_{2^{j}}(x)$ divides $f(x)$ and such factors occur precisely when $2^{j-1} \ediv (\ell-1)$ and $q \equiv -p \pmod{2^{j}}$.

For the case of Theorem~\ref{kpq:mainthm}~(ii), we obtain that if $\Phi_{d}(x) \mid f(x)$ and $j \ge 2$ is the positive integer for which $2^{j} \ediv d$, then $2^{j} \mid (\ell+1)$ and $q \equiv p + (d/2) \pmod{d}$.  The latter implies
\[
q \equiv p + (d/2) \equiv p + 2^{j-1} \pmod{2^{j}}.
\]
Theorem~\ref{kpq:mainthm}~(ii) then implies that we also have $\Phi_{2^{j}}(x) \mid f(x)$, and such factors occur precisely when $2^{j} \mid (\ell+1)$ and $q \equiv p + 2^{j-1} \pmod{2^{j}}$ for some $j \ge 2$.  Thus, for $j \ge 3$, we are led to Corollary~\ref{kpqmainthmcor}~(ii).

The condition $j \ge 3$ in Corollary~\ref{kpqmainthmcor}~(ii) leaves out the possibility that $4 \mid (\ell+1)$ and $\Phi_{4}(x)$ is a factor of $f(x)$ occurring when $q \equiv p + 2 \pmod{4}$.  Since $p$ is odd, we see $2p+2 \equiv 0 \pmod{4}$ so the condition $q \equiv p + 2 \pmod{4}$ is equivalent to
\[
q \equiv p + 2 \equiv p+2 + (2p+2) \equiv 3p \equiv -p \pmod{4}.
\]
Since $4 \mid (\ell+1)$, we see that $2 \ediv (\ell-1)$.  Thus, in this case, we already obtain from Corollary~\ref{kpqmainthmcor}~(i) that $\Phi_{4}(x) \mid f(x)$, finishing the proof.
\end{proof}

The corollary above does not extend to show all $\Phi_{n}(x)+\Phi_{m}(x)$ which have a cyclotomic divisor have one of the form $\Phi_{2^{j}}(x)$.  For example, the only cyclotomic divisor of $\Phi_{2}(x) + \Phi_{10}(x)$ and of $\Phi_{17}(x)+\Phi_{55}(x)$ is $x^{2}+x+1$.

\begin{corollary}\label{seventwelfthcor}
The density of primes $p$, $q$ and $\ell$ satisfying $\ell < q < p \le X$ for which $\Phi_{\ell p}(x) + \Phi_{\ell q}(x)$ is irreducible tends to $7/12$ as $X \rightarrow \infty$.
\end{corollary}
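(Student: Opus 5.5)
By Theorem~\ref{pqell:thm} with $t=1$, the non-cyclotomic part of $f(x)=\Phi_{\ell p}(x)+\Phi_{\ell q}(x)$ is irreducible. Hence $f(x)$ is irreducible exactly when it has no cyclotomic factor. By Corollary~\ref{kpqmainthmcor}, this happens exactly when neither condition (i) nor condition (ii) of that corollary holds for any $j\ge 2$. So the task reduces to computing the density of triples $\ell<q<p\le X$ of primes for which these congruence conditions fail. The case $\ell=2$ always gives irreducibility, but it contributes density $0$, so we may take $\ell$ odd.

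\textbf{Step 1: fix $\ell$ and sum over $j$.} Write $2^{s}\ediv(\ell-1)$ with $s\ge1$. Condition (i) can hold only for $j=s+1$. It then says $q\equiv -p \pmod{2^{s+1}}$.
\begin{itemize}
\item If $\ell\equiv 1\pmod 4$, then $s\ge2$ and $4\nmid \ell+1$, so (ii) never applies. Reducibility is exactly $q\equiv -p\pmod{2^{s+1}}$.
\item If $\ell\equiv 3\pmod 4$, then $s=1$, so (i) reads $q\equiv -p\pmod 4$. Write $2^{r}\ediv(\ell+1)$ with $r\ge2$. Then (ii) applies for $3\le j\le r$ and reads $q\equiv p+2^{j-1}\pmod{2^{j}}$. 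Each such condition forces $q\equiv p\pmod 4$, since $j\ge 3$. So it is disjoint from (i), and these conditions for different $j$ are mutually exclusive. Thus the reducible set is the disjoint union of $\{q\equiv -p \bmod 4\}$ and the sets $\{q\equiv p+2^{j-1}\bmod 2^{j}\}$ for $3\le j\le r$.
\end{itemize}

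\textbf{Step 2: conditional densities for fixed $\ell$.} By Dirichlet's theorem in progressions modulo $2^{j}$, among pairs $(p,q)$ of primes with $\ell<q<p\le X$, the proportion satisfying a given congruence relation between $q$ and $p$ modulo $2^{j}$ is $1/2^{j-1}$. There are $2^{j-1}$ odd classes, and $q$'s class is determined by $p$'s. The ordering $q<p$ does not affect this, by symmetry of the relations. Hence:
\begin{itemize}
\item for $\ell\equiv1\pmod 4$ the reducibility proportion is $2^{-s}$;
\item for $\ell\equiv 3\pmod4$ it is $\tfrac12+\sum_{j=3}^{r}2^{-(j-1)}=1-2^{1-r}$.
\end{itemize}

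\textbf{Step 3: average over $\ell$.} The triples are dominated by $\ell$ of size comparable to $X$, so we weight by the number of pairs above $\ell$, which is about $(\pi(X)-\pi(\ell))^2/2$. By Dirichlet's theorem for $\ell$ modulo powers of $2$, the $2$-adic profile of $\ell$ is independent of size in the limit. This allows replacing each $\ell$-dependent proportion by its average. The relative density of primes with $2^{s}\ediv(\ell-1)$ is $2^{-s}$, and the same holds for $2^{r}\ediv(\ell+1)$.
\begin{itemize}
\item The $\ell\equiv1\pmod 4$ part contributes $\sum_{s\ge2}2^{-s}\cdot2^{-s}=1/12$.
\item The $\ell\equiv3\pmod4$ part contributes $\sum_{r\ge2}2^{-r}(1-2^{1-r})=\tfrac12-2\cdot\tfrac1{12}=\tfrac13$.
\end{itemize}
The total reducible density is $1/12+1/3=5/12$, so the irreducible density is $7/12$.

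\textbf{Main obstacle.} The arithmetic above is routine. The real work is justifying the interchange of limits: we sum infinitely many $j$ classes and average proportions that depend on $\ell$. I would truncate at $2^{j}\le J$ for a fixed large $J$. The conditions with $2^{j}>J$ are contained in the sets $\ell\equiv\pm1\pmod{J}$, which have density $O(1/J)$ among $\ell$. On each truncated set, apply the prime number theorem in arithmetic progressions modulo $J$ uniformly, so that counts are $\pi(X;J,a)\sim\pi(X)/\phi(J)$, together with the fact that the cubic count of triples with residue constraints factors asymptotically. Then let $J\to\infty$.
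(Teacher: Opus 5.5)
Your proposal is correct, and it reaches $5/12$ by a softer route than the paper's.

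\textbf{The paper's route.} After checking the same disjointness of the conditions in Corollary~\ref{kpqmainthmcor} that you verify in Step~1, the paper discards triples with $\ell<X/\log X$ or $\max\{2^{\nu_2(\ell-1)},2^{\nu_2(\ell+1)}\}>\log X$, at a cost of $O(X^3/\log^4X)$. Then, for each $j$ with $2^j\le 2\log X$, it evaluates the triple sum over $\ell,q,p$ explicitly. This uses the Siegel--Walfisz theorem (uniform in these moduli) and partial summation for $\sum q$, $\sum\ell$, $\sum\ell^2$ over primes. The result is $X^3/(3\cdot 2^{2j-1}\log^3X)$ per $j$, hence $X^3/(18\log^3X)$ from (i) and $X^3/(72\log^3X)$ from (ii), with error $O(X^3\log\log X/\log^4X)$.

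\textbf{Your route.} You group by $\ell \bmod 4$ rather than by (i)/(ii). Your $1/12+1/3$ is the paper's $1/3+1/12$ regrouped: the $j=2$ case of (i) together with all of (ii) is your $\ell\equiv 3\pmod 4$ part. Your final paragraph supplies the rigor. For a fixed power of two $J$, the reducible triples with $2^j\le J$ form a union of residue-class triples modulo $J$. By the prime number theorem in progressions to a fixed modulus plus partial summation, each such class triple accounts for $\sim\binom{\pi(X)}{3}/\phi(J)^3$ ordered triples. The tail $2^j>J$ forces $\ell\equiv\pm1\pmod J$, so it contributes at most about $(12/J)\binom{\pi(X)}{3}$. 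Letting $J\to\infty$ squeezes the $\liminf$ and $\limsup$ to $5/12$.

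The per-$\ell$ conditional proportions in your Steps~2--3 are heuristic on their own, since they are not uniform for $\ell$ near $X$. The residue-class count modulo $J$ makes them unnecessary, so you should present that count as the actual proof.

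\textbf{Trade-off.} Your argument needs only qualitative equidistribution for a fixed modulus and gives exactly the stated limit. The paper's argument needs uniformity in the modulus, but in return gives a rate of convergence.
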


\begin{proof}
Throughout the proof we consider $X$ to be large.  
By the Prime Number Theorem (or see \eqref{siegelwalfisz} below) the number of prime triples $(p,q,\ell)$ satisfying $\ell < q < p \le X$ is 
\[
\binom{\pi(X)}{3} = \dfrac{X^{3}}{6 \log^{3}X} + O\bigg( \dfrac{X^{3}}{\log^{4}X} \bigg).
\]
To establish the corollary, it suffices then to show that the number of such prime triples for which $\Phi_{\ell p}(x) + \Phi_{\ell q}(x)$ is reducible, say $R(X)$, satisfies
\begin{equation}\label{RofXgoal}
R(X) = \dfrac{5}{12} \cdot \dfrac{X^{3}}{6 \log^{3}X} + O\bigg( \dfrac{X^{3}\log\log X}{\log^{4}X} \bigg)
= \dfrac{5\,X^{3}}{72 \log^{3}X} + O\bigg( \dfrac{X^{3}\log\log X}{\log^{4}X} \bigg).
\end{equation}
Given Theorem~\ref{kpq:mainthm}, we can view $R(X)$ as the number of prime triples $(p,q,\ell)$ satisfying $\ell < q < p \le X$ for which $\Phi_{\ell p}(x) + \Phi_{\ell q}(x)$ has a cyclotomic factor; and from Corollary~\ref{kpqmainthmcor}, we deduce that this is the same as the number of prime triples $(p,q,\ell)$ satisfying $\ell < q < p \le X$ for which Corollary~\ref{kpqmainthmcor}~(i) or (ii) holds.

Below, we make use of the Siegel--Walfisz Theorem (eg., \cite[Lemma~2.9]{elliott}) in the form that the number of primes $\le Y$ that are in a reduced residue class $a$ modulo a positive integer $A \le \log^{3} Y$ is 
\begin{equation}\label{siegelwalfisz}
\pi(Y; A,a) = 
\dfrac{Y}{\phi(A) \log Y} + O\bigg( \dfrac{Y}{\phi(A) \log^{2} Y} \bigg),
\end{equation}
where the implied constant is absolute.  
Note that the asymptotic formula corresponds to an asymptotic formula for $\pi(Y)$ when $A = 2$ and $a = 1$.  

If $\ell - 1$ is divisible by $4$, then $8 \nmid (\ell+1)$.  Hence, the conditions on $\ell$ in Corollary~\ref{kpqmainthmcor}~(i) and (ii) are disjoint except when $2 \ediv (\ell-1)$, in which case we also have the possibility that $2^{j} \mid (\ell+1)$ for some $j \ge 3$.  In this case, the primes $p$ and $q$ in (i) satisfy $q \equiv -p \pmod{4}$ and 
the primes $p$ and $q$ in (ii) satisfy $q \equiv p + 2^{j-1} \pmod{2^{j}}$.  As $j \ge 3$, the latter implies $q \equiv p \pmod{4}$, which cannot occur at the same time as $q \equiv -p \pmod{4}$.  We deduce that the triples $(p,q,\ell)$ that occur in Corollary~\ref{kpqmainthmcor}~(i) are disjoint from the triples $(p,q,r)$ occurring in Corollary~\ref{kpqmainthmcor}~(ii).  

Furthermore, a triple $(p,q,r)$ satisfies Corollary~\ref{kpqmainthmcor}~(i) for at most one $j$ since $2^{j-1} \ediv (\ell-1)$ for at most one $j$.  In Corollary~\ref{kpqmainthmcor}~(ii), the condition $q \equiv p + 2^{j-1} {\hskip -2pt}\pmod{2^{j}}$ implies $2^{j-1} \ediv (q-p)$.  Thus, also, a triple $(p,q,\ell)$ satisfies Corollary~\ref{kpqmainthmcor}~(ii) for at most one $j$.  

Having shown that the conditions in Corollary~\ref{kpqmainthmcor}~(i) and (ii) are all disjoint as $j$ varies, our plan is to establish \eqref{RofXgoal} by summing up the different contributions from Corollary~\ref{kpqmainthmcor}~(i) and (ii).  However, we make some simplifications by showing first we need only consider triples satisfying 
\begin{equation}\label{qboundnubound}
\ell \ge \dfrac{X}{\log X} \qquad \text{and} \qquad 
\max \{ 2^{\nu_{2}(\ell-1)}, 2^{\nu_{2}(\ell+1)} \} \le \log X,
\end{equation}
where $\nu_{2}$ is the $2$-adic valuation giving the exponent in the largest power of $2$ that divides a number. 
For the former case in \eqref{qboundnubound}, observe that the Prime Number Theorem or \eqref{siegelwalfisz} implies that the number of prime triples $(p,q,\ell)$ for which $\ell < X/\log X$ is at most
\[
\pi(X) \cdot \pi(X) \cdot \pi(X/\log X) \ll \dfrac{X^{3}}{\log^{4}X},
\]
so such prime triples can be viewed as contributing only to the error term in \eqref{RofXgoal}.  For the second case in \eqref{qboundnubound}, let $j_{0}$ be the integer for which $2^{j_{0}-1} \le \log X < 2^{j_{0}}$.  If $2^{\nu_{2}(\ell-1)} > \log X$, then $\nu_{2}(\ell-1) \ge j_{0}$ or equivalently $2^{j_{0}} \mid (\ell-1)$.  
By \eqref{siegelwalfisz}, we see that the number of $\ell \le X$ for which $2^{j_{0}} \mid (\ell-1)$ is at most
\[
\pi(X; 2^{j_{0}},1) \ll \dfrac{X}{\phi(2^{j_{0}}) \log X} = \dfrac{X}{2^{j_{0}-1} \log X} \ll \dfrac{X}{\log^{2} X}.
\]
As the number of choices for the pair of primes $p$ and $q$ is $O(\pi(X)^{2}) = O(X^{2}/\log^{2}X)$, we deduce that the number of prime triples $(p,q,\ell)$ for which $2^{\nu_{2}(\ell-1)} > \log X$ is $\ll X^{3}/\log^{4} X$, and therefore these triples can be viewed as contributing only to the error term in \eqref{RofXgoal}.  A similar analysis can be done in the case that $2^{\nu_{2}(\ell+1)} > \log X$, finishing the justification that we now need only consider triples satisfying \eqref{qboundnubound}.

We continue to use the notation that $j_{0}$ is the integer for which $2^{j_{0}-1} \le \log X < 2^{j_{0}}$ and now restrict to prime triples $(p,q,\ell)$ satisfying $\ell < q < p \le X$ and \eqref{qboundnubound}.  In particular, \eqref{qboundnubound} implies that $2^{j_{0}} \nmid (\ell \pm 1)$.
Suppose we have a prime triple $(p,q,\ell)$ with $p > q > \ell$ for which Corollary~\ref{kpqmainthmcor}~(i) holds.  Since $2^{j-1} \ediv (\ell-1)$ in (i), we deduce that $j-1 < j_{0}$ or equivalently $j \le j_{0}$.
Observe that the condition $2^{j-1} \ediv (\ell-1)$ is equivalent to saying $\ell \equiv 2^{j-1}+1 \pmod{2^{j}}$.  
Thus, the number of prime triples $(p,q,\ell)$ with $p > q > \ell$ as in Corollary~\ref{kpqmainthmcor}~(i), with $j \in [2,j_{0}]$ fixed and $\ell \ge X/\log X$ as in \eqref{qboundnubound}, is
\begin{equation*}
\begin{split}
\sum_{\substack{X/\log X \le \ell \le X \\ \ell \equiv 2^{j-1}+1 {\hskip -6pt}\pmod{2^{j}}}} &{\hskip -3pt}\sum_{\ell < q \le X} {\hskip -3pt}\sum_{\substack{q < p \le X \\ p \equiv -q {\hskip -6pt}\pmod{2^{j}}}}  1 
= {\hskip -5pt}\sum_{\substack{\ell \le X \\ \ell \equiv 2^{j-1}+1 {\hskip -6pt}\pmod{2^{j}}}} {\hskip -3pt}\sum_{\ell < q \le X} \big( \pi(X; 2^{j}, -q) - \pi(q; 2^{j}, -q) \big) \\[5pt]
&= \sum_{\substack{X/\log X \le \ell \le X \\ \ell \equiv 2^{j-1}+1 {\hskip -6pt}\pmod{2^{j}}}} \sum_{\ell < q \le X} 
\bigg( \dfrac{X}{2^{j-1} \log X} - \dfrac{q}{2^{j-1} \log q} + O\bigg( \dfrac{X}{2^{j}\log^{2} X} \bigg) \bigg) \\[5pt]
&= \sum_{\substack{X/\log X \le \ell \le X \\ \ell \equiv 2^{j-1}+1 {\hskip -6pt}\pmod{2^{j}}}} \sum_{\ell < q \le X} 
\bigg( \dfrac{X-q}{2^{j-1} \log X} + O\bigg( \dfrac{X \log\log X}{2^{j}\log^{2} X} \bigg) \bigg) \\[5pt]
&= \sum_{\substack{X/\log X \le \ell \le X \\ \ell \equiv 2^{j-1}+1 {\hskip -6pt}\pmod{2^{j}}}} \sum_{\ell < q \le X} \dfrac{X-q}{2^{j-1} \log X} + O\bigg( \dfrac{X^{3} \log\log X}{2^{j}\log^{4} X} \bigg),
\end{split}
\end{equation*}
where we have used \eqref{siegelwalfisz} and taken advantage of $q > \ell \ge X/\log X$.  
For a given $Y$, we can see that
\begin{equation}\label{sumofprimestoY}
\begin{split}
\sum_{q \le Y} q &= \int_{2^{-}}^{Y} t \,d \,\pi(t)
= Y\pi(Y) - \int_{2^{-}}^{Y} \pi(t) \,dt \\[5pt]
&= Y\pi(Y) - \int_{Y/\log Y}^{Y} \pi(t) \,dt + O\bigg( \dfrac{Y^{2}}{\log^{2}Y} \bigg) \\[5pt]
&= Y\pi(Y) - \bigg( 1 + O\bigg( \dfrac{\log\log Y}{\log Y} \bigg)  \bigg) \dfrac{1}{\log Y} \int_{Y/\log Y}^{Y} t \,dt + O\bigg( \dfrac{Y^{2}}{\log^{2}Y} \bigg) \\[5pt]
&= \dfrac{Y^{2}}{2\log Y} + O\bigg( \dfrac{Y^{2} \log\log Y}{\log^{2}Y} \bigg).
\end{split}
\end{equation}
Therefore, using $\ell \ge X/\log X$, we obtain that
\begin{align*}
\sum_{\ell < q \le X} \dfrac{X-q}{2^{j-1} \log X} &= \dfrac{X \big(\pi(X) - \pi(\ell)\big)}{2^{j-1} \log X} - \dfrac{X^{2}-\ell^{2}}{2^{j}\log^{2} X} + O\bigg(  \dfrac{X^{2} \log\log X}{2^{j}\log^{3}X} \bigg)  \\[5pt]
&= \dfrac{X^{2}}{2^{j}\log^{2} X} - \dfrac{X \ell}{2^{j-1}\log^{2} X} + \dfrac{\ell^{2}}{2^{j}\log^{2} X} 
+ O\bigg(  \dfrac{X^{2} \log\log X}{2^{j}\log^{3}X} \bigg).
\end{align*}
Next, we want to sum over primes $\ell \in [X/\log X, X]$ where $\ell \equiv 2^{j-1}+1 {\hskip -3pt}\pmod{2^{j}}$.   
Recall $j \le j_{0}$ and $2^{j_{0}-1} \le \log X$, so the modulus $2^{j}$ is at most $2 \log X$.  
Similar to our analysis in \eqref{sumofprimestoY}, applying the Siegel--Walfisz Theorem as given in \eqref{siegelwalfisz}, we obtain
\begin{equation*} 
\begin{split}
\sum_{\substack{X/\log X \le \ell \le X \\ \ell \equiv 2^{j-1}+1 {\hskip -6pt}\pmod{2^{j}}}} \ell 
&= \int_{(X/\log X)^{-}}^{X} t \,d \,\pi(t; 2^{j}, 2^{j-1}+1) \\[5pt]
&= X \pi(X; 2^{j}, 2^{j-1}+1) - \int_{X/\log X}^{X} \pi(t; 2^{j}, 2^{j-1}+1) \,dt + O\bigg( \dfrac{X^{2}}{2^{j}\log^{2}X} \bigg)\\[5pt]
&= \dfrac{X^{2}}{2^{j-1}\log X}  -  \dfrac{1}{2^{j-1}\log X} \int_{X/\log X}^{X} t \,dt + O\bigg( \dfrac{X^{2} \log\log X}{2^{j}\log^{2}X} \bigg) \\[5pt]
&= \dfrac{X^{2}}{2^{j}\log X} + O\bigg( \dfrac{X^{2} \log\log X}{2^{j}\log^{2}X} \bigg),
\end{split}
\end{equation*}
and
\begin{equation*} 
\begin{split}
\sum_{\substack{X/\log X \le \ell \le X \\ \ell \equiv 2^{j-1}+1 {\hskip -6pt}\pmod{2^{j}}}} \ell^{2}
&= \int_{(X/\log X)^{-}}^{X} t^{2} \,d \,\pi(t; 2^{j}, 2^{j-1}+1) \\[5pt]
&= X^{2} \pi(X; 2^{j}, 2^{j-1}+1) - 2\int_{X/\log X}^{X} \pi(t; 2^{j}, 2^{j-1}+1) \,t\,dt + O\bigg( \dfrac{X^{3}}{2^{j}\log^{2}X} \bigg) \\[5pt]
&= \dfrac{X^{3}}{2^{j-1}\log X}  -  \dfrac{1}{2^{j-2}\log X} \int_{X/\log X}^{X} t^{2} \,dt + O\bigg( \dfrac{X^{3} \log\log X}{2^{j}\log^{2}X} \bigg) \\[5pt]
&= \dfrac{X^{3}}{3 \cdot 2^{j-1}\log X} + O\bigg( \dfrac{X^{3} \log\log X}{2^{j}\log^{2}X} \bigg).
\end{split}
\end{equation*}
Thus, 
\begin{align*}
&\sum_{\substack{X/\log X \le \ell \le X \\ \ell \equiv 2^{j-1}+1 {\hskip -6pt}\pmod{2^{j}}}} \sum_{\ell < q \le X} \dfrac{X-q}{2^{j-1} \log X} \\[5pt]
&\qquad = \dfrac{X^{2} \pi(X; 2^{j}, 2^{j-1}+1)}{2^{j} \log^{2} X}
- \dfrac{X^{3}}{2^{2j-1}\log^{3} X} 
+ \dfrac{X^{3}}{3\cdot 2^{2j-1}\log^{3} X} 
+ O\bigg(  \dfrac{X^{3} \log\log X}{2^{j}\log^{4}X} \bigg)  \\[5pt]
&\qquad = \dfrac{X^{3}}{\log^{3} X} \bigg( \dfrac{1}{2^{2j-1}} - \dfrac{1}{2^{2j-1}} + \dfrac{1}{3\cdot 2^{2j-1}} \bigg)
+ O\bigg(  \dfrac{X^{3} \log\log X}{2^{j}\log^{4}X} \bigg) \\[5pt]
&\qquad = \dfrac{X^{3}}{3\cdot 2^{2j-1}\log^{3} X} + O\bigg(  \dfrac{X^{3} \log\log X}{2^{j}\log^{4}X} \bigg).
\end{align*}
Summing over $j \in [2,j_{0}]$ where $2^{j_{0}} > \log X$, we deduce that the number of prime triples $(p,q,\ell)$ with $p > q > \ell$ as in Corollary~\ref{kpqmainthmcor}~(i) is
\[
\bigg( \sum_{2 \le j \le j_{0}} \dfrac{1}{3\cdot 2^{2j-1}} \bigg) \dfrac{X^{3}}{\log^{3} X}
+  O\bigg(  \dfrac{X^{3} \log\log X}{\log^{4}X} \bigg)
= \dfrac{X^{3}}{18 \,\log^{3} X} + O\bigg(  \dfrac{X^{3} \log\log X}{\log^{4}X} \bigg).
\]

Now, we want to make a similar analysis for Corollary~\ref{kpqmainthmcor}~(ii) to determine a main term and error term for the triple sum
\[
\sum_{\substack{X/\log X \le \ell \le X \\ \ell \equiv -1 {\hskip -4pt}\pmod{2^{j}}}} 
\,\sum_{\ell < q \le X}
\sum_{\substack{q < p \le X \\ p \equiv q+ 2^{j-1} {\hskip -4pt}\pmod{2^{j}}}} 1,
\] 
where here $3 \le j \le j_{0}-1$.  An almost identical analysis to that given for Corollary~\ref{kpqmainthmcor}~(i) allows us to deduce that the triple sum is
\[
\dfrac{X^{3}}{3\cdot 2^{2j-1}\log^{3} X} + O\bigg(  \dfrac{X^{3} \log\log X}{2^{j}\log^{4}X} \bigg),
\]
as before.  In this case, we are summing over $j \in [3,j_{0}-1]$, which gives then that the number of prime triples $(p,q,\ell)$ with $p > q > \ell$ as in Corollary~\ref{kpqmainthmcor}~(ii) is
\[
\bigg( \sum_{3 \le j \le j_{0}-1} \dfrac{1}{3\cdot 2^{2j-1}} \bigg) \dfrac{X^{3}}{\log^{3} X}
+  O\bigg(  \dfrac{X^{3} \log\log X}{\log^{4}X} \bigg)
= \dfrac{X^{3}}{72 \,\log^{3} X} + O\bigg(  \dfrac{X^{3} \log\log X}{\log^{4}X} \bigg).
\]
Summing the asymptotic formulas that we obtained in the case of prime triples $(p,q,\ell)$ with $p > q > \ell$ as in Corollary~\ref{kpqmainthmcor}~(i) and as in Corollary~\ref{kpqmainthmcor}~(ii), we deduce \eqref {RofXgoal}, finishing the proof.
\end{proof}

\section{Preliminaries for further applications of Theorem~\ref{factorizationthm}}

In the introduction, for primes $p > q > \ell$, we explained that there is a multiple in $\mathbb Z[x]$ of the polynomial $\Phi_{\ell p}(x) + \Phi_{\ell q}(x)$ which is of the form \eqref{basicFeq} needed to apply Theorem~\ref{factorizationthm}.  In this section, we address the existence of such a multiple for other values of $\Phi_{n}(x)+\Phi_{m}(x)$.  We begin with the following.

\begin{lemma}\label{multunique}
Let $u(x)$ be a polynomial in $\mathbb Z[x]$ for which $u(0)$ is prime and for which there exists a $v(x) \in \mathbb Z[x]$ such that $u(x) v(x)$ is of the form given in \eqref{basicFeq} for some prime $p$.  Then $v(x)$ is unique.  In other words, if $v_{1}(x)$ and $v_{2}(x)$ are in $\mathbb Z[x]$ with $u(x) v_{1}(x)$ of the form \eqref{basicFeq} for some prime $p = p_{1}$ and with $u(x) v_{2}(x)$ of the form \eqref{basicFeq} for some primes $p = p_{2}$, then $p_{1} = p_{2}$ and $v_{1}(x) = v_{2}(x)$.
\end{lemma}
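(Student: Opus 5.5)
Write $F_{1}(x) = u(x)v_{1}(x)$ and $F_{2}(x) = u(x)v_{2}(x)$, each of the form \eqref{basicFeq}, with constant terms the primes $p_{1}$ and $p_{2}$. The strategy is to copy Borisov's argument from the proof of Theorem~\ref{factorizationthm}: cross-multiply to get an identity of the same shape as \eqref{borisovpoint}, then compare lowest-degree terms. Throughout, the point is that all non-constant coefficients of an $F_{i}$ are $0$ or $1$.

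\textbf{Setting up.} Since $u(0)$ is prime and divides both $p_{1}$ and $p_{2}$, we get $p_{1} = u(0) = p_{2}$; call this common prime $p$. It follows that $v_{1}(0) = v_{2}(0) = 1$. Both $F_{i}$ are monic, so the leading coefficients of $u$ and $v_{i}$ are $\pm1$. Replacing $u$ by $-u$ if necessary (and adjusting the $v_{i}$ accordingly), we may assume $u$ is monic, and then $v_{1}, v_{2}$ are monic as well.

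\textbf{The key identity.} We have
\[
F_{1}(x)\,v_{2}(x) = u(x)v_{1}(x)v_{2}(x) = F_{2}(x)\,v_{1}(x),
\]
and hence
\[
F_{1}(x)\big(v_{2}(x)-v_{1}(x)\big) = \big(F_{2}(x)-F_{1}(x)\big)\,v_{1}(x).
\]
Suppose $v_{1} \ne v_{2}$, so that both sides are non-zero and $F_{1} \ne F_{2}$.
\begin{itemize}
\item On the left, let $c x^{e}$ be the lowest-degree term of $v_{2}-v_{1}$. The lowest-degree term of the left side is $p c x^{e}$, since $F_{1}(0) = p$.
\item On the right, $F_{2}-F_{1}$ has zero constant term, and all its non-zero coefficients are $\pm1$. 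Its lowest term is $\pm x^{e'}$, and $v_{1}(0) = 1$, so the lowest-degree term of the right side is $\pm x^{e'}$.
\end{itemize}
Comparing, $pc = \pm1$, which is impossible for the prime $p$ and an integer $c$. Hence $v_{1} = v_{2}$.

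\textbf{Main obstacle.} The only delicate point is the normalization to monic $u$: we need $v_{i}(0) = +1$ exactly, or at least $\pm1$, which is all the final comparison requires. This is immediate here, since $u(0) = p_{i}$ forces $v_{i}(0) = 1$ before any normalization. The sign issue therefore poses no real difficulty, and the argument is essentially a direct reuse of \eqref{borisovpoint}.
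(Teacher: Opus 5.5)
Your argument is correct, and it packages the proof differently from the paper. The paper writes $u=\sum a_jx^j$ and $v=\sum b_jx^j$. It then shows by induction that each $b_{j+1}$ is forced, because the coefficient $p\,b_{j+1}+(b_ja_1+\cdots+b_0a_{j+1})$ of $x^{j+1}$ must lie in $\{0,1\}$ and $p\ge 2$. You instead compare lowest-degree terms in a single polynomial identity, in the style of Borisov's argument for Theorem~\ref{factorizationthm}.

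The underlying obstruction is the same in both: at the first index where two candidate $v$'s differ, the corresponding coefficients of the two products differ by a nonzero multiple of $p$, yet both lie in $\{0,1\}$. In fact your cross-multiplied identity is just $F_2-F_1=u(v_2-v_1)$ multiplied by $v_1$, with $uv_1=F_1$ substituted. So you could compare lowest terms in $F_2-F_1=u(v_2-v_1)$ directly, getting $\pm x^{e'}=p\,c\,x^{e}$, and skip the cross-multiplication.

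What the paper's version buys is constructiveness. Its recursion is exactly the algorithm for computing $v$ from $u$ that the paper uses in the discussion and examples after the lemma, whereas your version gives only uniqueness.

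One small clean-up: drop the step replacing $u$ by $-u$ to make $u$ monic. It is never used, and it would make $u(0)=-p$, which contradicts your correct observation that $u(0)=p$ and $v_i(0)=1$.
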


\begin{proof}
Let $v(x) \in \mathbb Z[x]$ be such that $u(x) v(x)$ is of the form \eqref{basicFeq} for some prime $p$.  Observe that $u(0)$ is a prime that divides the constant term of $u(x)v(x)$, so we must have $p = u(0)$.  
Write
\[
u(x) = \sum_{j=0}^{\infty} a_{j} x^{j}
\qquad \text{and} \qquad
v(x) = \sum_{j=0}^{\infty} b_{j} x^{j},
\]
where the $a_{j}$ and $b_{j}$ are integers and all but finitely many are $0$.  We view the $a_{j}$ as fixed and the $b_{j}$ as indeterminate integers.  The condition $u(0) = p$ implies $a_{0} = p$.  
Given $u(x) v(x)$ is of the form given in \eqref{basicFeq}, we see that $a_{0}b_{0} = p$ so that $b_{0} = 1$.  Suppose now that we know the values of $b_{0}, \ldots, b_{j}$ for some integer $j \ge 0$.  Given $u(x) v(x)$ is of the form given in \eqref{basicFeq}, the coefficient of $x^{j+1}$ in $u(x) v(x)$ is 
\begin{equation}\label{multunique:prfeq}
b_{j+1} a_{0} + b_{j} a_{1} + \cdots + b_{0} a_{j+1} \in \{ 0, 1 \}.
\end{equation}
Since $a_{0} = p$, we deduce that
\[
b_{j+1} = 
\begin{cases}
-(b_{j} a_{1} + \cdots + b_{0} a_{j+1})/p &\text{if } b_{j} a_{1} + \cdots + b_{0} a_{j+1} \equiv 0 {\hskip -6pt}\pmod{p} \\
-(b_{j} a_{1} + \cdots + b_{0} a_{j+1}-1)/p &\text{if } b_{j} a_{1} + \cdots + b_{0} a_{j+1} \equiv 1 {\hskip -6pt}\pmod{p}.
\end{cases}
\]
Since $u(x) v(x)$ is in the form \eqref{basicFeq}, exactly one of the two possibilities for $b_{j+1}$ occurs for each $j$.  Furthermore, as $v(x)$ exists, we have that $b_{j+1} = 0$ for $j$ sufficiently large.  The lemma follows.
\end{proof}

The above proof is constructive, providing a way for us to compute $v(x)$ given $u(x)$.  If we arrive at a point that $b_{j} = 0$ for $\deg u$ consecutive values of $j$, then we can stop as \eqref{multunique:prfeq} leads to $b_{j} = 0$ for all larger $j$ as well.  But what if $v(x)$ does not exist?  In that case, the algorithm will not arrive at a point where $b_{j} = 0$ for $\deg u$ consecutive values of $j$, and it becomes less clear when to know we can stop and deduce that such a $v(x)$ does not exist.  To resolve this difficulty in practice, we observe how we plan to use Theorem~\ref{factorizationthm} and what implications this has on $v(x)$.  The goal in applying Theorem~\ref{factorizationthm} is to show that a certain polynomial $u(x)$ (for Conjecture~\ref{nicolconj}, we want $u(x) = \Phi_{n}(x)+\Phi_{m}(x)$) is a product of reciprocal polynomials times an irreducible non-reciprocal polynomial.  Since we want $u(x)v(x)$ to be of the form \eqref{basicFeq}, we require $u(x)v(x)$ to be monic.  In particular, the leading coefficient of $u(x)$ cannot be divisible by $u(0)$ which is prime.  This implies $u(x)$ is not reciprocal and hence has an irreducible non-reciprocal factor.  In order then for $u(x)v(x)$ to be of the form \eqref{basicFeq}, Theorem~\ref{factorizationthm} implies $v(x)$ must be reciprocal.  
Therefore, we are only interested in the case that $v(x)$ in Lemma~\ref{multunique} is reciprocal.   

\begin{example}\label{algorithmends}
Take $u(x) = \Phi_{3\cdot 13}(x)+\Phi_{7\cdot 11}(x)$.  The computation given in the proof of Lemma~\ref{multunique} gives that if $v(x)$ exists, then the terms of $v(x)$ of degree $\le 36$ are given by 
\begin{align*}
v(x) = \cdots &+ x^{36} - x^{30} - x^{29} - x^{28} - x^{27} - x^{26} - x^{25} - x^{24} - x^{23} - x^{22} + x^{17} + x^{16} + x^{15}\\
&+ x^{14} + x^{13} + x^{12} + x^{11} + x^{10} + x^{9} + x^{8} + x^{7} + x^{6} + x^{5} + x^{4} + x^{3} + x^{2} + x + 1.
\end{align*}
Here, $u(0) = 2$, and we wish to apply Theorem~\ref{factorizationthm} to deduce that the non-reciprocal part of $u(x)$ is irreducible.  As indicated above, we require $v(x)$ to be reciprocal meaning that if $k = \deg v(x)$, then the leading terms of $v(x)$ are given by
\[
v(x) = x^{k} + x^{k-1} + \cdots + x^{k-17} - x^{k-22} - \cdots - x^{k-30} + x^{k-36} + \cdots.
\]
Note that $\deg u = 60$ so that $\deg (uv) = k+60$.  A direct computation now gives that the leading terms of $u(x) v(x)$ are given by 
\[
u(x) v(x) = x^{k+60} + x^{k+53} + x^{k+49} + x^{k+46} + x^{k+39} + x^{k+32} - x^{k+31} + x^{k+29} + x^{k+25} + \cdots.
\]
As the coefficient of $x^{k+31}$ is not in $\{ 0,1 \}$, we deduce that we cannot have $u(x) v(x)$ in the form \eqref{basicFeq}.  Hence, we cannot apply Theorem~\ref{factorizationthm} as described above.
\end{example}

\begin{example}
To emphasize an alternative use of Theorem~\ref{factorizationthm}, we show how to justify $\Phi_{3\cdot 13}(x)+\Phi_{7\cdot 11}(x)$ from the previous example has an irreducible non-reciprocal part.  To do so, we take $u(x) = \Phi_{3\cdot 13}(-x)+\Phi_{7\cdot 11}(-x)$.  The non-reciprocal part of a polynomial $f(x) \in \mathbb Z[x]$ is irreducible if and only if the non-reciprocal part of the polynomial $f(-x)$ is irreducible, so to show that $\Phi_{3\cdot 13}(x)+\Phi_{7\cdot 11}(x)$ has an irreducible non-reciprocal part, it suffices to show that $u(x)$ has an irreducible non-reciprocal part.  When we compute $v(x)$ for $u(x) = \Phi_{3\cdot 13}(-x)+\Phi_{7\cdot 11}(-x)$, we obtain
\begin{align*}
v(x) = x^{33} &- x^{32} + x^{31} + x^{26} - x^{25} + x^{24} + x^{22} - x^{21} + 2 x^{20} - x^{19} + x^{18}  \\
&+ x^{15} - x^{14} + 2 x^{13} - x^{12} + x^{11} + x^{9} - x^{8} + x^{7} + x^{2} - x + 1.
\end{align*}
A direct computation gives
\begin{align*}
u(x) v(x) = x^{93} &+ x^{90} + x^{80} + x^{77} + x^{57} + x^{50} + x^{46} + x^{39} \\
&+ x^{18} + x^{16} + x^{13} + x^{11} + x^{7} + x^{3} + 2.
\end{align*}
Therefore, Theorem~\ref{factorizationthm} applies, and we deduce that the non-reciprocal part of $u(x)$ and, hence, $\Phi_{3\cdot 13}(x)+\Phi_{7\cdot 11}(x)$ is irreducible, where we are using that since $u(x)$ is monic and $u(0) = 2$, we know $u(x)$ has a non-constant non-reciprocal part.  An application of Lemma~\ref{recipfactf(x)} now implies further that the non-cyclotomic part of $\Phi_{3\cdot 13}(x)+\Phi_{7\cdot 11}(x)$ is irreducible. 

Without even checking the explicit value of $v(x)$ given above, we know from Theorem~\ref{factorizationthm} that $v(x)$ must only have irreducible factors that are reciprocal.  One can check that in fact
\[
v(x) = \Phi_{2}(x)^{3} \,\Phi_{6}(x) \,\Phi_{14}(x) \,\Phi_{22}(x) \,\Phi_{26}(x) 
= \dfrac{(x^{3}+1)(x^{7}+1)(x^{11}+1)(x^{13}+1)}{x+1}.
\]
The factorization of $v(x)$ and its connections to the primes appearing in the subscripts of $\Phi_{3\cdot 13}(x)$ and $\Phi_{7\cdot 11}(x)$ are surely not a coincidence, particularly given that Lemma~\ref{multunique} implies that no other $v(x)$ could have been used in this argument.
\end{example}

Observe that Lemma~\ref{multunique} is implying that there is at most one multiple of $\Phi_{n}(x) + \Phi_{m}(x)$ in $\mathbb Z[x]$ that is of the form \eqref{basicFeq}, and we can replace here $\Phi_{n}(x) + \Phi_{m}(x)$ with any other polynomial.  
A particularly intriguing choice of a polynomial to consider multiples of is a polynomial $F(x)$ already of the form \eqref{basicFeq}.  Here, Lemma~\ref{multunique} implies that we cannot multiply such an $F(x)$ by a polynomial in $\mathbb Z[x]$ and create a different polynomial of the form \eqref{basicFeq}. 
Momentarily, we will be considering a multiple of $\Phi_{n}(-x) + \Phi_{m}(-x)$, as in the example above, that is potentially of the form \eqref{basicFeq}.  In moving forward, it is worth keeping in mind that in cases where we have found a multiple of a polynomial of the form \eqref{basicFeq}, it is the unique multiple with this property.  We will not need this fact moving forward, but it does mean that we have found the right choice of a multiple to consider in such cases.

We recall the function $\omega(m)$ used to denote the number of distinct prime factors of $m$.
We define
\begin{equation}\label{vnm:eq}
v_{n,m}(x) = (x+1)^{\frac{(-1)^{\omega(n)+1}+(-1)^{\omega(m)+1}}{2}} 
{\hskip -8pt}\prod_{\substack{d \mid n, \,d > 1 \\ \omega(d) \not\equiv \omega(n) {\hskip -6pt}\pmod{2}}} {\hskip -8pt}(x^{d} + 1) \times {\hskip -8pt}\prod_{\substack{d' \mid m, \,d' > 1 \\ \omega(d') \not\equiv \omega(m) {\hskip -6pt}\pmod{2}}} {\hskip -8pt}(x^{d'} + 1),
\end{equation}
and
\begin{equation}\label{Fnm:eq}
\begin{split}
F_{n,m}(x) &= \prod_{\substack{d \mid n, \,d > 1 \\ \omega(d) \equiv 0 {\hskip -6pt}\pmod{2}}} {\hskip -8pt}\big(x^{d} + 1\big) \times 
\prod_{\substack{d' \mid m \\ \omega(d') \equiv 1 {\hskip -6pt}\pmod{2}}} {\hskip -8pt}\big(x^{d'} + 1\big) \\[5pt]
&\qquad\quad + 
\prod_{\substack{d' \mid m, \,d' > 1 \\ \omega(d') \equiv 0 {\hskip -6pt}\pmod{2}}} {\hskip -8pt}\big(x^{d'} + 1\big) 
\times 
\prod_{\substack{d \mid n \\ \omega(d) \equiv 1 {\hskip -6pt}\pmod{2}}} {\hskip -8pt}\big(x^{d} + 1\big).
\end{split}
\end{equation}
In the case that the first factor on the right of \eqref{vnm:eq} is $(x+1)^{-1}$, we see that $\omega(n)$ and $\omega(m)$ are both even and the two products on the right of \eqref{vnm:eq} each have a factor of $x+1$.  We deduce that $v_{n,m}(x)$ and more clearly $F_{n,m}(x)$ are in $\mathbb Z[x]$.  
Also, observe that $F_{n,m}(x) = F_{m,n}(x)$ so that a result for $F_{n,m}(x)$ implies an analogous result for $F_{m,n}(x)$ (see, for example, Theorem~\ref{bigenoughgapresult}).  

Next, we explain why these polynomials are of interest to us.
Note that $v_{3\cdot 13,7\cdot 11}(x)$ is equal to the value of $v(x)$ given in the previous example, and we want to justify its similar role in a more general setting. 

\begin{lemma}\label{vnmFnm:lem}
Let $n > 1$ and $m > 1$ be odd squarefree integers for which $\omega(n) \equiv \omega(m) \pmod{2}$.  
Given $v_{n,m}(x)$ in \eqref{vnm:eq} and $F_{n,m}(x)$ in \eqref{Fnm:eq}, we have
\begin{equation}\label{vnmFnm:eq}
\big( \Phi_{n}(-x) + \Phi_{m}(-x) \big) \,v_{n,m}(x) = F_{n,m}(x).
\end{equation}
\end{lemma}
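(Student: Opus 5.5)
The plan is to express $\Phi_{n}(-x)$ and $\Phi_{m}(-x)$ as products of factors $(x^{d}+1)^{\pm 1}$ and then clear denominators. By Lemma~\ref{lem:cyclotomic-properties}, $\Phi_{n}(x)=\prod_{d\mid n}(x^{d}-1)^{\mu(n/d)}$. Since $n$ is odd, every divisor $d$ is odd, so $(-x)^{d}-1=-(x^{d}+1)$. Because $n>1$ is squarefree, $\sum_{d\mid n}\mu(n/d)=0$, and the signs cancel. This gives
\[
\Phi_{n}(-x)=\prod_{d\mid n}(x^{d}+1)^{\mu(n/d)},\qquad \mu(n/d)=(-1)^{\omega(n)-\omega(d)}.
\]
The exponent is $+1$ when $\omega(d)\equiv\omega(n)\pmod 2$ and $-1$ otherwise. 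Write $N_{+}$ and $N_{-}$ for the products of $x^{d}+1$ over divisors $d$ of $n$ (including $d=1$) with $\omega(d)$ even and odd, respectively. Then $\Phi_{n}(-x)=N_{+}/N_{-}$ if $\omega(n)$ is even and $N_{-}/N_{+}$ if $\omega(n)$ is odd. Define $M_{\pm}$ similarly for $m$.

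Next I split into the two parity cases allowed by the hypothesis $\omega(n)\equiv\omega(m)\pmod 2$.

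\emph{Case $\omega(n),\omega(m)$ both odd.} The exponent on $x+1$ in \eqref{vnm:eq} is $1$. The divisor $d=1$ has $\omega(1)=0\not\equiv\omega(n)$, so it is the one excluded term, and the factor $(x+1)$ restores it. Thus $v_{n,m}(x)=(x+1)^{-1}N_{+}M_{+}$. Hence
\[
(\Phi_{n}(-x)+\Phi_{m}(-x))\,v_{n,m}(x)=\bigl(N_{-}/N_{+}+M_{-}/M_{+}\bigr)N_{+}M_{+}/(x+1)=\bigl(N_{-}M_{+}+M_{-}N_{+}\bigr)/(x+1).
\]
In \eqref{Fnm:eq}, the first product is $(N_{+}/(x+1))\cdot M_{-}$ and the second is $(M_{+}/(x+1))\cdot N_{-}$. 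So $F_{n,m}=(N_{+}M_{-}+M_{+}N_{-})/(x+1)$, and the two sides agree.

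\emph{Case $\omega(n),\omega(m)$ both even.} The exponent on $x+1$ in \eqref{vnm:eq} is $-1$. The products in $v_{n,m}$ run over $d>1$ with $\omega(d)$ odd, which is all of $N_{-}$ since $d=1$ is not odd. So $v_{n,m}(x)=N_{-}M_{-}/(x+1)$, and
\[
(\Phi_{n}(-x)+\Phi_{m}(-x))\,v_{n,m}(x)=\bigl(N_{+}M_{-}+M_{+}N_{-}\bigr)/(x+1).
\]
This matches \eqref{Fnm:eq} in the same way as the odd case.

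The proof is a direct computation with no real obstacle. The only delicate point is the bookkeeping of the divisor $d=1$, which contributes $x+1$ to $N_{+}$ and $M_{+}$. It is handled by the $(x+1)^{\pm1}$ prefactor in $v_{n,m}$ and by the restriction $d>1$ in the even-$\omega$ products of $F_{n,m}$. I also need to verify that the global sign cancels, which uses $\sum_{d\mid n}\mu(n/d)=0$ for $n>1$.
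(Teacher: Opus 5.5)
Your proof is correct and follows essentially the paper's route: write $\Phi_{n}(-x)=\prod_{d\mid n}(x^{d}+1)^{(-1)^{\omega(n)-\omega(d)}}$ and then verify \eqref{vnmFnm:eq} directly in the two parity cases, including the bookkeeping of the divisor $d=1$. The paper reaches that product formula through $\Phi_{n}(-x)=\Phi_{2n}(x)$, while you substitute $-x$ into the M\"obius product formula and cancel the sign using $\sum_{d\mid n}\mu(n/d)=0$. You also write out the case computation that the paper dismisses as ``a direct computation.''
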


\begin{proof}
Writing $n = p_{1} \cdots p_{t}$ where the $p_{j}$ are distinct odd primes, we deduce that
\begin{equation*}
\begin{split}
\Phi_{n}(-x) &=\Phi_{2n}(x) 
= \prod_{d \mid (2n)} (x^{d}-1)^{\mu(2n/d)}
= \prod_{d \mid n} \bigg( \dfrac{x^{2d}-1}{x^{d}-1} \bigg)^{\mu(n/d)} \\[5pt]
&= \prod_{d \mid n} \big( x^{d} + 1 \big)^{(-1)^{\omega(n)-\omega(d)}}
= (x+1)^{(-1)^{\omega(n)}} {\hskip -8pt}\prod_{d \mid n, \,d > 1} \big( x^{d} + 1 \big)^{(-1)^{\omega(n)-\omega(d)}} \\[5pt]
&= (x+1)^{(-1)^{\omega(n)}} {\hskip -8pt}\prod_{\substack{d \mid n, \,d > 1 \\ \omega(d) \equiv \omega(n) {\hskip -6pt}\pmod{2}}} {\hskip -8pt}\big(x^{d} + 1\big) \times \prod_{\substack{d \mid n, \,d > 1 \\ \omega(d) \not\equiv \omega(n) {\hskip -6pt}\pmod{2}}} {\hskip -8pt}\big(x^{d} + 1\big)^{-1}.
\end{split}
\end{equation*}
Similarly,
\[
\Phi_{m}(-x) = (x+1)^{(-1)^{\omega(m)}} {\hskip -8pt}\prod_{\substack{d' \mid m, \,d' > 1 \\ \omega(d') \equiv \omega(m) {\hskip -6pt}\pmod{2}}} {\hskip -8pt}\big(x^{d'} + 1\big) \times \prod_{\substack{d' \mid m,, \,d' > 1 \\ \omega(d') \not\equiv \omega(m) {\hskip -6pt}\pmod{2}}} {\hskip -8pt}\big(x^{d'} + 1\big)^{-1}.
\]
Considering the two cases $\omega(n) \equiv \omega(m) \equiv 0 \pmod{2}$ and $\omega(n) \equiv \omega(m) \equiv 1 \pmod{2}$, a direct computation using the above formulas for $\Phi_{n}(-x)$ and $\Phi_{m}(-x)$ with \eqref{vnm:eq} and \eqref{Fnm:eq} gives \eqref{vnmFnm:eq}.
\end{proof}

In the case that $\omega(n) \not\equiv \omega(m) \pmod{2}$, there is an analogous lemma of some interest.

\begin{lemma}\label{vnmFnm:lem2}
Let $n > 1$ and $m > 1$ be odd squarefree integers for which $\omega(n) \not\equiv \omega(m) \pmod{2}$.  
Given $v_{n,m}(x)$ in \eqref{vnm:eq} and $F_{n,m}(x)$ in \eqref{Fnm:eq}, we have
\begin{equation}\label{vnmFnm:eq2}
\big( \Phi_{n}(-x) \,\Phi_{m}(-x) + 1 \big) \,v_{n,m}(x) = F_{n,m}(x).
\end{equation}
\end{lemma}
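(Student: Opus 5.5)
Following the proof of Lemma~\ref{vnmFnm:lem}, I would reuse the factorization obtained there for $\Phi_{n}(-x)$ and $\Phi_{m}(-x)$ and then compare both sides of \eqref{vnmFnm:eq2} directly. For brevity set
\[
A_{n}^{\equiv} = \prod_{\substack{d \mid n, \,d > 1 \\ \omega(d) \equiv \omega(n) \pmod{2}}} (x^{d}+1),
\qquad
A_{n}^{\not\equiv} = \prod_{\substack{d \mid n, \,d > 1 \\ \omega(d) \not\equiv \omega(n) \pmod{2}}} (x^{d}+1),
\]
and define $A_{m}^{\equiv}$ and $A_{m}^{\not\equiv}$ in the same way. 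The computation in the proof of Lemma~\ref{vnmFnm:lem} gives
\[
\Phi_{n}(-x) = (x+1)^{(-1)^{\omega(n)}} A_{n}^{\equiv}/A_{n}^{\not\equiv},
\]
and similarly for $m$.

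Since $\omega(n) \not\equiv \omega(m) \pmod{2}$, the exponent of $x+1$ in \eqref{vnm:eq} is $\big((-1)^{\omega(n)+1}+(-1)^{\omega(m)+1}\big)/2 = 0$. Hence
\[
v_{n,m}(x) = A_{n}^{\not\equiv} A_{m}^{\not\equiv}.
\]
Likewise $(-1)^{\omega(n)} + (-1)^{\omega(m)} = 0$, so the powers of $x+1$ cancel in the product $\Phi_{n}(-x)\Phi_{m}(-x)$. This leaves
\[
\Phi_{n}(-x)\,\Phi_{m}(-x) = \frac{A_{n}^{\equiv} A_{m}^{\equiv}}{A_{n}^{\not\equiv} A_{m}^{\not\equiv}}.
\]
Multiplying $\Phi_{n}(-x)\Phi_{m}(-x)+1$ by $v_{n,m}(x)$ therefore gives
\[
A_{n}^{\equiv} A_{m}^{\equiv} + A_{n}^{\not\equiv} A_{m}^{\not\equiv}.
\]

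It remains to match this sum with \eqref{Fnm:eq}. By the symmetry $F_{n,m} = F_{m,n}$, and since the target expression is also symmetric in $n$ and $m$, I may assume $\omega(n)$ is even and $\omega(m)$ is odd.

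\textbf{First summand.} Here $A_{n}^{\equiv}$ is the product of $x^{d}+1$ over divisors $d > 1$ of $n$ with $\omega(d)$ even, which is the first product in \eqref{Fnm:eq}. Also $A_{m}^{\equiv}$ is the product over divisors $d' > 1$ of $m$ with $\omega(d')$ odd. Since $\omega(1) = 0$ is even, the condition $d' > 1$ is automatic, so $A_{m}^{\equiv}$ equals the second product in \eqref{Fnm:eq}, which has no $d'>1$ restriction.

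\textbf{Second summand.} $A_{m}^{\not\equiv}$ is the product over $d' > 1$ with $\omega(d')$ even, which is the third product in \eqref{Fnm:eq}. $A_{n}^{\not\equiv}$ is the product over $d > 1$ with $\omega(d)$ odd, and again $d = 1$ is excluded automatically. This matches the last product in \eqref{Fnm:eq}.

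Together these give \eqref{vnmFnm:eq2}. The only delicate point is the bookkeeping of the $d = 1$ factor $(x+1)$ and its exponent. This is exactly where the parity hypothesis $\omega(n) \not\equiv \omega(m) \pmod{2}$ is used: it makes the $x+1$ powers cancel in the product $\Phi_{n}(-x)\Phi_{m}(-x)$, whereas in Lemma~\ref{vnmFnm:lem} they cancel in the sum.
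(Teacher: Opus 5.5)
Your proposal is correct and follows essentially the paper's approach. The paper omits this proof as ``very similar'' to that of Lemma~\ref{vnmFnm:lem}, and your argument supplies the omitted direct verification: you reuse the factorization of $\Phi_{n}(-x)$, note that the $x+1$ exponents vanish when $\omega(n)\not\equiv\omega(m) \pmod{2}$, and match $A_{n}^{\equiv}A_{m}^{\equiv}+A_{n}^{\not\equiv}A_{m}^{\not\equiv}$ with \eqref{Fnm:eq}, handling the automatic $d=1$ exclusions and reducing the other parity case by symmetry via $F_{n,m}=F_{m,n}$.
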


We omit the proof of Lemma~\ref{vnmFnm:lem2} as it is very similar to the proof of Lemma~\ref{vnmFnm:lem}.  Also, we note that we are not asserting that in general $F_{n,m}(x)$ is of the form given in \eqref{basicFeq}, but our plan is to exploit choices of $n$ and $m$ where it is. 
 
\begin{theorem}\label{fnm:thm}
Let $n' = p_{1} \cdots p_{t}$ and $m' = q_{1} \cdots q_{s}$ for positive integers $t$ and $s$ and $t+s$ distinct odd primes $p_{1}, \ldots, p_{t}, q_{1}, \ldots, q_{s}$.  Let $e$ be a nonnegative integer, and set $n = 2^{e}n'$ and $m = 2^{e}m'$.  Suppose that $F_{n',m'}(x)$ is of the form given in \eqref{basicFeq}.  Then we have the following.
\begin{enumerate}[label=(\roman*)]
\item
If $t \equiv s {\hskip -2pt}\pmod{2}$, then the non-cyclotomic part of $\Phi_{n}(x)+\Phi_{m}(x)$ is irreducible.  \label{fnmthmfirstitem}
\item
If $t \not\equiv s {\hskip -2pt}\pmod{2}$, then the non-cyclotomic part of $\Phi_{n}(x)\,\Phi_{m}(x) + 1$ is irreducible.  \label{fnmthmseconditem}
\end{enumerate} 
\end{theorem}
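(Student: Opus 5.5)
We first treat $e = 0$, i.e. $n = n'$ and $m = m'$ odd squarefree, and then reduce general $e$ to this case. For $e=0$, in case \ref{fnmthmfirstitem} Lemma~\ref{vnmFnm:lem} gives $\big(\Phi_{n'}(-x)+\Phi_{m'}(-x)\big)v_{n',m'}(x) = F_{n',m'}(x)$. In case \ref{fnmthmseconditem} Lemma~\ref{vnmFnm:lem2} gives $\big(\Phi_{n'}(-x)\Phi_{m'}(-x)+1\big)v_{n',m'}(x) = F_{n',m'}(x)$. By hypothesis $F_{n',m'}(x)$ has the form \eqref{basicFeq}, so Theorem~\ref{factorizationthm} says its non-reciprocal part is irreducible.

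The polynomial $v_{n',m'}(x)$ is a product of factors $x^{d}+1$, possibly divided by one $x+1$, so all of its irreducible factors are cyclotomic, hence reciprocal. Let $u(x)$ be the left-hand factor, $\Phi_{n'}(-x)+\Phi_{m'}(-x)$ or $\Phi_{n'}(-x)\Phi_{m'}(-x)+1$. Every irreducible non-reciprocal factor of $u(x)$, counted with multiplicity, is a non-reciprocal factor of $F_{n',m'}(x)$. So the non-reciprocal part of $u(x)$ divides that of $F_{n',m'}(x)$, and in fact equals it up to sign, since $v_{n',m'}$ contributes no non-reciprocal factors. Hence the non-reciprocal part of $u(x)$ is irreducible. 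It is non-constant, because $F_{n',m'}(x)$ is monic with $F_{n',m'}(0)$ prime and so is not reciprocal.

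Replacing $x$ by $-x$ sends reciprocal polynomials to reciprocal polynomials up to sign and preserves irreducibility. Therefore the non-reciprocal part of $\Phi_{n'}(x)+\Phi_{m'}(x)$, respectively of $\Phi_{n'}(x)\Phi_{m'}(x)+1$, is irreducible. To pass from the non-reciprocal part to the non-cyclotomic part, we show all reciprocal irreducible factors are cyclotomic. In case \ref{fnmthmseconditem} this is Lemma~\ref{recipfactphinphimplus1}. In case \ref{fnmthmfirstitem} it is Lemma~\ref{recipfactf(x)}, provided $\phi(n')\ne\phi(m')$. If instead $\phi(n')=\phi(m')$, then $\Phi_{n'}(-x)+\Phi_{m'}(-x)$ has leading coefficient $2$ while $F_{n',m'}$ is monic. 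But $v_{n',m'}$ is monic, so the product would have leading coefficient $2$, a contradiction; hence $\phi(n')\ne\phi(m')$ automatically.

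For general $e\ge 1$, Lemma~\ref{lem:cyclotomic-properties} gives $\Phi_{2^{e}n'}(x)=\Phi_{n'}(-x^{2^{e-1}})$, and similarly for $m'$. Thus the polynomial in question is $G(x^{2^{e-1}})$ with $G(y)=u(y)$ as above. Multiplying by $v_{n',m'}(x^{2^{e-1}})$ yields $F_{n',m'}(x^{2^{e-1}})$. This is again of the form \eqref{basicFeq}: its exponents are distinct, it has the same prime constant term, and all nonzero coefficients are still $1$. Also, $v_{n',m'}(x^{2^{e-1}})$ is a product of factors $x^{2^{e-1}d}+1$, so all its irreducible factors are cyclotomic. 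The same argument then applies verbatim to $\Phi_{n}(x)+\Phi_{m}(x)$, or to $\Phi_{n}(x)\Phi_{m}(x)+1$, directly in the variable $x$, and there is no need to substitute $-x$. For case \ref{fnmthmfirstitem} we still have $\phi(n)=2^{e-1}\phi(n')\ne 2^{e-1}\phi(m')=\phi(m)$.

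The main point needing care is the bookkeeping that the non-reciprocal part of $u$ coincides with that of $F_{n',m'}$, which is immediate from unique factorization in $\mathbb Z[x]$. No other serious obstacle is expected.
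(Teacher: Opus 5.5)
Your proof is correct and follows essentially the same route as the paper. You multiply by $v_{n',m'}$ (composed with $x^{2^{e-1}}$ when $e\ge 1$) to obtain $F_{n',m'}(x^{2^{e-1}})$, apply Theorem~\ref{factorizationthm}, and pass from non-reciprocal to non-cyclotomic parts via Lemmas~\ref{recipfactf(x)} and~\ref{recipfactphinphimplus1}; the only cosmetic difference is that the paper reduces $e=0$ to $e=1$ rather than treating $e=0$ first.

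Two of your steps make explicit what the paper leaves implicit. The first is your leading-coefficient check that $\phi(n')\ne\phi(m')$ in case (i), which Lemma~\ref{recipfactf(x)} requires. The second is your use of the cyclotomic factorization of $v_{n',m'}$ to identify the non-reciprocal parts of the two sides.
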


\begin{proof}
We apply Lemma~\ref{lem:cyclotomic-properties}.  The non-cyclotomic part of $\Phi_{n'}(x)+\Phi_{m'}(x)$ is irreducible  if and only if the non-cyclotomic part of $\Phi_{n'}(-x)+\Phi_{m'}(-x) = \Phi_{2n'}(x)+\Phi_{2m'}(x)$ is irreducible.  Also, the non-cyclotomic part of $\Phi_{n'}(x)\Phi_{m'}(x)+1$ is irreducible if and only if the non-cyclotomic part of $\Phi_{n'}(-x)\Phi_{m'}(-x)+1 = \Phi_{2n'}(x)\Phi_{2m'}(x)+1$ is irreducible.  Thus, we may and do suppose that $e \ge 1$.
We see then that
\[
\Phi_{n}(x)+\Phi_{m}(x) = \Phi_{2n'}(x^{2^{e-1}})+\Phi_{2m'}(x^{2^{e-1}}) 
= \Phi_{n'}(-x^{2^{e-1}})+\Phi_{m'}(-x^{2^{e-1}}) 
\]
and
\[
\Phi_{n}(x)\,\Phi_{m}(x) + 1 = \Phi_{2n'}(x^{2^{e-1}})\,\Phi_{2m'}(x^{2^{e-1}}) + 1
= \Phi_{n'}(-x^{2^{e-1}})\,\Phi_{m'}(-x^{2^{e-1}}) + 1.
\]
If a multiple of $\Phi_{n'}(-x)+\Phi_{m'}(-x)$ or $\Phi_{n'}(-x)\,\Phi_{m'}(-x) + 1$ in $\mathbb Z[x]$ is of the form \eqref{basicFeq}, then a multiple of $\Phi_{n'}(-x^{2^{e-1}})+\Phi_{m'}(-x^{2^{e-1}})$ or $\Phi_{n'}(-x^{2^{e-1}})\,\Phi_{m'}(-x^{2^{e-1}}) + 1$, respectively, is of the form \eqref{basicFeq}.   
By Theorem~\ref{factorizationthm}, Lemma~\ref{recipfactf(x)} and Lemma~\ref{recipfactphinphimplus1}, it suffices then to show for \ref{fnmthmfirstitem} that if $t \equiv s {\hskip -2pt}\pmod{2}$, then a multiple of $\Phi_{n'}(-x)+\Phi_{m'}(-x)$ is in the form \eqref{basicFeq}, and for \ref{fnmthmseconditem} that if $t \not\equiv s {\hskip -2pt}\pmod{2}$, then a multiple of $\Phi_{n'}(-x)\,\Phi_{m'}(-x) + 1$  is in the form \eqref{basicFeq}.  By Lemma~\ref{vnmFnm:lem}, Lemma~\ref{vnmFnm:lem2} and the conditions in the theorem, the polynomial $F_{n',m'}(x)$ is such a multiple, completing the proof.
\end{proof}

\begin{example}\label{4primeseacheg}
One can check that with 
\[
n = 3 \cdot 7 \cdot 41 \cdot 1163 
\qquad \text{ and } \qquad
m = 5 \cdot 11 \cdot 131 \cdot 21803
\]
the polynomial $F_{n,m}(x)$ is of the form given in \eqref{basicFeq}.  From \eqref{Fnm:eq}, this is equivalent to asserting that the sums of distinct elements from 
\[
S_{1} = \{ d \mid n : d > 1, \,\omega(d) \equiv 0 {\hskip -6pt}\pmod{2} \} \,\textstyle\bigcup\,
 \{ d' \mid m : \omega(d') \equiv 1 {\hskip -6pt}\pmod{2} \} 
\]
together with the sums of distinct elements from 
\[
S_{2} = \{ d' \mid m : d' > 1, \,\omega(d') \equiv 0 {\hskip -6pt}\pmod{2} \} \,\textstyle\bigcup\,
 \{ d \mid n : \omega(d) \equiv 1 {\hskip -6pt}\pmod{2} \} 
\]
are distinct, other than $1$ that occurs by considering summing elements from $\emptyset$ in both $S_{1}$ and $S_{2}$.  Let $T(n,m)$ be the set of these distinct sums other than $1$.  If we remove prime factors from $n$ or $m$, the exponents in \eqref{Fnm:eq} become a subset of $T(n,m)$.  As a result, if $n' \mid n$ and $m' \mid m$, then we see that $F_{n',m'}(x)$ also has the property that it is of the form given in \eqref{basicFeq}.  We deduce from Theorem~\ref{fnm:thm} that for each divisor $n'$ of $n$ and each divisor $m'$ of $m$ and all nonnegative integers $e$, the non-cyclotomic part of $\Phi_{2^{e}n'}(x) + \Phi_{2^{e}m'}(x)$ is irreducible if $\omega(n') \equiv \omega(m') \pmod{2}$ and the non-cyclotomic part of $\Phi_{2^{e}n'}(x)\,\Phi_{2^{e}m'}(x)+1$ is irreducible if $\omega(n') \not\equiv \omega(m') \pmod{2}$.
\end{example}

The number of prime divisors in the above example can be extended indefinitely as our next result implies.  

\begin{theorem}\label{bigenoughgapresult}
Let $n$ and $m$ be positive integers for which $F_{n,m}(x)$ is of the form given in \eqref{basicFeq}.  Suppose further that $F_{n,m}(x)$ satisfies the property:
\[
\text{No two terms of $F_{n,m}(x)$ have degrees differing by $1$.}
\tag{P}
\]
Then for a prime
\[
p >  \max \bigg\{ \sum_{\substack{d \mid n, \,d > 1 \\ \omega(d) \equiv 0 {\hskip -6pt}\pmod{2}}} d \,+ \sum_{\substack{d' \mid m \\ \omega(d') \equiv 1 {\hskip -6pt}\pmod{2}}} d', \sum_{\substack{d' \mid m, \,d' > 1 \\ \omega(d') \equiv 0 {\hskip -6pt}\pmod{2}}} d' \,+ \sum_{\substack{d \mid n \\ \omega(d) \equiv 1 {\hskip -6pt}\pmod{2}}} d \bigg\} + 1,
\]
the polynomial $F_{pn,m}(x)$ is also of the form given in \eqref{basicFeq} and no two degrees of terms in $F_{pn,m}(x)$ differ by $1$. 
\end{theorem}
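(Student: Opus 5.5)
Write $A(x)$ and $B(x)$ for the two products in \eqref{Fnm:eq} defining $F_{n,m}(x)$, so $F_{n,m}(x) = A(x) + B(x)$. Let $M_{A} = \deg A$ and $M_{B} = \deg B$; these are exactly the two sums inside the maximum, and the hypothesis on $p$ reads $p > \max\{M_{A}, M_{B}\} + 1$. The plan is to express $F_{pn,m}(x)$ through $A$ and $B$ and then read off its monomials. The divisors of $pn$ are the $d \mid n$ together with the $pd$, $d \mid n$, and $\omega(pd) = \omega(d) + 1$ has the opposite parity. In the first product of $F_{pn,m}$, the even-$\omega$ divisors $>1$ of $pn$ are the even-$\omega$ $d>1$ of $n$ together with the $pd$ with $\omega(d)$ odd. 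In the second product, the odd-$\omega$ divisors of $pn$ are the odd-$\omega$ $d$ of $n$ together with the $pd$ with $\omega(d)$ even, including $d=1$. Writing $A = A_{n}A_{m}$ and $B = B_{m}B_{n}$ for the $n$- and $m$-parts, this gives
\[
F_{pn,m}(x) = A(x)\,B_{n}(x^{p}) + B(x)\,A_{n}(x^{p})\,(x^{p}+1),
\]
where $A_{n}(x) = \prod_{d \mid n, d>1, \omega(d) \text{ even}} (x^{d}+1)$ and $B_{n}(x) = \prod_{d\mid n, \omega(d) \text{ odd}} (x^{d}+1)$. Equivalently, $F_{pn,m}(x) = A(x)B'(x^{p}) + B(x)A'(x^{p})$ with $B' = B_{n}$ and $A' = (x+1)A_{n}$.

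Next I would use the hypothesis that $F_{n,m}$ has the form \eqref{basicFeq}. Then $A$ and $B$ both have $0/1$ coefficients, and $A - 1$ and $B - 1$ have disjoint supports. The same holds for every sub-product, in particular for $A_{n}$, $B_{n}$ and $(x+1)A_{n}$, since any coefficient exceeding $1$ or any overlap would persist in the full product: all coefficients are nonnegative, and each product contains a constant term $1$ from its other factor. Hence $A(x)B_{n}(x^{p})$ is a sum of terms $x^{a + pb}$ with $a \in \operatorname{supp} A$ and $b \in \operatorname{supp} B_{n}$. Since $0 \le a \le M_{A} < p$, each exponent determines the pair $(a,b)$ uniquely, so this product is $0/1$. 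The same argument applies to $B(x)A'(x^{p})$, using $M_{B} < p$.

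For the sum I need $a + pb \ne a' + pb'$ with $a \in \operatorname{supp}A$, $b \in \operatorname{supp}B_{n}$, $a' \in \operatorname{supp}B$, $b' \in \operatorname{supp}A'$, except for $a = a' = b = b' = 0$. Since $|a - a'| < p$, any equality forces $b = b'$ and $a = a'$. Now $b = b'$ lies in $\operatorname{supp}B_{n} \cap \operatorname{supp}A'$, and disjointness of the sub-products gives $b = 0$, so $a = a'$ lies in $\operatorname{supp}A \cap \operatorname{supp}B = \{0\}$. Here I am using that $B_{n}\cdot(x+1)A_{n}$ is the $n$-part of $AB$-type data. More precisely, $B_{n}$ divides $B$ and $A'$ divides $A(x+1)$, and I must check that $B_{n}$ and $(x+1)A_{n}$ have disjoint positive supports. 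I expect this to be the main obstacle. The degree $1$ from the extra factor $x+1$ is the subtle point: $1$ lies in $\operatorname{supp}A'$ and might also lie in $\operatorname{supp}B_{n}$. This is exactly where property (P) should enter.

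To handle it, observe $A'(x) = (x+1)A_{n}(x)$, so $\operatorname{supp}A' = S \cup (S+1)$ with $S = \operatorname{supp}A_{n}$, and this union is disjoint because $A'$ is $0/1$. An overlap of $S+1$ with $\operatorname{supp}B_{n}$ would mean some $s+1 \in \operatorname{supp}B_{n}$ with $s \in \operatorname{supp}A_{n}$. Then $s \in \operatorname{supp}A$ and $s + 1 \in \operatorname{supp}B$, two terms of $F_{n,m}$ differing by $1$ (or, if $s = 0$, the term $x^{1}$ alongside the constant). Property (P) rules this out. The same (P)-style reasoning, with $x$ replaced by $x^{p}$, shows that $A$ and $B$ are only combined with $p$-multiples. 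Finally, property (P) for $F_{pn,m}$ comes from the same decomposition: exponents are $a + pb$, and consecutive ones would force either equal $b$ with $a, a'$ consecutive in $\operatorname{supp}A \cup \operatorname{supp}B$, contradicting (P), or $|a - a'| = p \pm 1$, which is impossible since $|a - a'| \le \max\{M_{A}, M_{B}\} < p - 1$. This is why the bound carries the extra $+1$. Monicity and the constant term $2$ are immediate.
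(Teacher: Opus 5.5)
This is essentially the paper's proof: it uses the same decomposition $F_{pn,m}(x)=A(x)B_n(x^p)+B(x)(x^p+1)A_n(x^p)$, uses property (P) to see that $B_n(y)+(y+1)A_n(y)$ has every non-constant coefficient equal to $1$, and then uses separation by $p>\max\{\deg A,\deg B\}+1$ to control both the coefficients and the new property (P). One step needs repair: $(x+1)A_n$ is not a sub-product of $A$ or $B$ (the factor from $d=1$ occurs in neither), so your domination argument does not show it is $0/1$, and deducing $S\cap(S+1)=\emptyset$ from that is circular; instead apply (P) directly, since $S\subseteq\operatorname{supp}A$ consists of degrees of terms of $F_{n,m}$ (also note that $p>n$, hence $p\nmid n$, which your divisor split uses).
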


\begin{proof}[Proof of Theorem~\ref{bigenoughgapresult}]
We rewrite \eqref{Fnm:eq} in the form $F_{n,m}(x) = F^{(1)}_{n,m}(x)+F^{(2)}_{n,m}(x)$, where  
\[
F^{(1)}_{n,m}(x) = \prod_{\substack{d \mid n, \,d > 1 \\ \omega(d) \equiv 0 {\hskip -6pt}\pmod{2}}} {\hskip -8pt}\big(x^{d} + 1\big) \times 
\prod_{\substack{d' \mid m \\ \omega(d') \equiv 1 {\hskip -6pt}\pmod{2}}} {\hskip -8pt}\big(x^{d'} + 1\big) 
\]
and
\[
F^{(2)}_{n,m}(x) = \prod_{\substack{d' \mid m, \,d' > 1 \\ \omega(d') \equiv 0 {\hskip -6pt}\pmod{2}}} {\hskip -8pt}\big(x^{d'} + 1\big) \times 
\prod_{\substack{d \mid n \\ \omega(d) \equiv 1 {\hskip -6pt}\pmod{2}}} {\hskip -8pt}\big(x^{d} + 1\big).
\]
Note that the given inequality on $p$ is simply 
\[
p > (\deg F_{n,m}) + 1 = \max\big\{\!\deg F^{(1)}_{n,m}, \deg F^{(2)}_{n,m} \big\} + 1.
\]
Furthermore, this inequality implies $p > n$ since if $n > 1$, then $n$ is a divisor of $n$ occurring in one of the products in $F^{(1)}_{n,m}(x)$ and $F^{(2)}_{n,m}(x)$.  Also, we see that
\[
F^{(1)}_{pn,m}(x) = F^{(1)}_{n,m}(x) 
{\hskip -4pt}\prod_{\substack{d \mid n \\ \omega(d) \equiv 1 {\hskip -6pt}\pmod{2}}} {\hskip -8pt}\big( x^{pd} + 1 \big) 
\quad \text{ and } \quad
F^{(2)}_{pn,m}(x) = F^{(2)}_{n,m}(x) 
{\hskip -4pt}\prod_{\substack{d \mid n \\ \omega(d) \equiv 0 {\hskip -6pt}\pmod{2}}} {\hskip -8pt}\big( x^{pd} + 1 \big),
\]
where in the latter case we include $d = 1$ so that the factor $x^{p}+1$ occurs there.  The conditions on $F_{n,m}(x)$ imply that the non-constant terms in
\[
\prod_{\substack{d \mid n \\ \omega(d) \equiv 1 {\hskip -6pt}\pmod{2}}} {\hskip -8pt}\big( x^{pd} + 1 \big) 
\,+
\prod_{\substack{d \mid n, \,d > 1 \\ \omega(d) \equiv 0 {\hskip -6pt}\pmod{2}}} {\hskip -8pt}\big( x^{pd} + 1 \big)
\]
all have coefficient $1$ as they appear as non-constant terms in $F_{n,m}(x^{p})$ and further that the degrees of these terms (including the constant term) do not differ by $p$.  It follows that the non-constant terms in 
\[
\prod_{\substack{d \mid n \\ \omega(d) \equiv 1 {\hskip -6pt}\pmod{2}}} {\hskip -8pt}\big( x^{pd} + 1 \big) 
\,+
\prod_{\substack{d \mid n \\ \omega(d) \equiv 0 {\hskip -6pt}\pmod{2}}} {\hskip -8pt}\big( x^{pd} + 1 \big)
\]
all have coefficient $1$.  Since the degrees of these terms also differ by more than $\deg F^{(1)}_{n,m}$ and $\deg F^{(2)}_{n,m}$ and since each of $F^{(1)}_{n,m}(x)$, $F^{(2)}_{n,m}(x)$ and $F^{(1)}_{n,m}(x) + F^{(2)}_{n,m}(x)$ only have terms with coefficient $1$ other than the constant term $2$ in $F^{(1)}_{n,m}(x) + F^{(2)}_{n,m}(x)$, we see that all the coefficients of $F_{pn,m}(x)$ are $1$ except the constant term which is $2$.  Note that $p > \max\big\{\!\deg F^{(1)}_{n,m}, \deg F^{(2)}_{n,m} \big\} + 1$ also ensures that degrees of all the terms in $F_{pn,m}(x)$ are different from $p-1$ modulo $p$.  This prohibits two terms in $F_{pn,m}(x)$ with degrees that differ by $1$.
\end{proof}

By definition \eqref{Fnm:eq}, for odd primes $p$ and $q$, we have
\[
F_{p,q}(x) = x^{p} + x^{q} + 2,
\]
so condition (P) holds.  
As a consequence of Theorem~\ref{fnm:thm} and Theorem~\ref{bigenoughgapresult}, we therefore obtain the following result, which can be viewed as a preliminary result before proving the stronger Theorem~\ref{arbitrarynumprimes:thm} in the next section.

\begin{corollary}
Let $t$ and $s$ be arbitrary positive integers.  If $t \equiv s \pmod{2}$, then there exist infinitely many choices of positive integers $n$ and $m$ with $\omega(n) = t$ and $\omega(m) = s$ such that the non-cyclotomic part of $\Phi_{n}(x) + \Phi_{m}(x)$ is irreducible. 
If $t \not\equiv s \pmod{2}$, then there exist infinitely many choices of positive integers $n$ and $m$ with $\omega(n) = t$ and $\omega(m) = s$ such that the non-cyclotomic part of $\Phi_{n}(x) \,\Phi_{m}(x) + 1$ is irreducible.
\end{corollary}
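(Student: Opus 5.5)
The plan is induction on the number of prime factors, using Theorem~\ref{bigenoughgapresult} to add primes one at a time and Theorem~\ref{fnm:thm} to convert the resulting polynomial into the irreducibility statements. The base case is the one just observed: for distinct odd primes $p_{1}$ and $q_{1}$, the polynomial $F_{p_{1},q_{1}}(x) = x^{p_{1}} + x^{q_{1}} + 2$ is of the form \eqref{basicFeq}. It also satisfies (P), since two distinct odd exponents differ by at least $2$, and $p_{1}$ and $q_{1}$ exceed $1$.

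For the inductive step, fix $t$ and $s$. Suppose we already have distinct odd primes $p_{1},\ldots,p_{t'}$ and $q_{1},\ldots,q_{s'}$, with $n' = p_{1}\cdots p_{t'}$ and $m' = q_{1}\cdots q_{s'}$, such that $F_{n',m'}(x)$ is of the form \eqref{basicFeq} and satisfies (P). If $t' < t$, apply Theorem~\ref{bigenoughgapresult} with a prime $p$ larger than the stated bound; this bound is at least $\deg F_{n',m'} + 1$, which exceeds $n'$ and $m'$. Then $p$ is odd and distinct from all the earlier primes, and $F_{pn',m'}(x)$ is again of the form \eqref{basicFeq} with property (P). If instead $s' < s$, use the symmetry $F_{n,m} = F_{m,n}$ to apply Theorem~\ref{bigenoughgapresult} to $F_{m',n'}$ and adjoin a new prime to $m'$. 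Every prime above the bound works, so at each step there are infinitely many admissible choices.

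After $(t-1)+(s-1)$ steps we obtain squarefree odd $n'$ and $m'$ with $\omega(n') = t$ and $\omega(m') = s$, and $F_{n',m'}(x)$ of the form \eqref{basicFeq}. Theorem~\ref{fnm:thm} with $e = 0$ then gives the result. If $t \equiv s \pmod 2$, the non-cyclotomic part of $\Phi_{n'}(x)+\Phi_{m'}(x)$ is irreducible. If $t \not\equiv s \pmod 2$, the non-cyclotomic part of $\Phi_{n'}(x)\Phi_{m'}(x)+1$ is irreducible. Because the last prime adjoined can be any prime beyond a bound, we get infinitely many distinct pairs $(n',m')$. Alternatively, we could keep $(n',m')$ fixed and take $n = 2^{e}n'$ and $m = 2^{e}m'$ for every $e \ge 0$; these also have $\omega(n) = t+1$ and $\omega(m) = s+1$ when $e \ge 1$, so the primes-only construction is the one to use for the exact $\omega$ values.

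The only point needing care is the bookkeeping that the new prime avoids the previous ones and keeps (P), and that the parity conditions in Theorem~\ref{fnm:thm} use $t = \omega(n')$ and $s = \omega(m')$. Both are immediate: the first from the size bound on $p$ together with the conclusion of Theorem~\ref{bigenoughgapresult}, and the second from the construction. So I expect no real obstacle.
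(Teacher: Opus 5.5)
Your proposal is correct and is the paper's own argument: start from $F_{p,q}(x)=x^{p}+x^{q}+2$, which satisfies (P). Then adjoin primes to $n$, or to $m$ via $F_{n,m}=F_{m,n}$, using Theorem~\ref{bigenoughgapresult}, and finish with Theorem~\ref{fnm:thm} at $e=0$. One small precision: when $t=s=1$ no prime is adjoined, so there the infinitude comes from the free choice of the distinct odd primes $p_{1},q_{1}$ in the base case, not from a last adjoined prime.
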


\section{Further applications of Theorem~\ref{factorizationthm}}

With Theorem~\ref{fnm:thm} at our disposal, we are ready to obtain some further results in the direction of Conjecture~\ref{nicolconj} and Conjecture~\ref{newconj} that arose in our development of $F_{n,m}(x)$.  In particular, we consider specific choices for $t$ and $s$ in Theorem~\ref{fnm:thm}.  We begin with a proof of Theorem~\ref{21case:thm} mentioned in the introduction.

\begin{proof}[Proof of Theorem~\ref{21case:thm}]
In Theorem~\ref{fnm:thm}, we take $n' = p_{1}p_{2}$ and $m' = q_{1}$.  From \eqref{Fnm:eq}, we obtain
\[
F_{n',m'}(x) = (x^{p_{1}p_{2}}+1)(x^{q_{1}}+1) + (x^{p_{1}}+1)(x^{p_{2}}+1).
\]
For $F_{n',m'}(x)$ to be of the form \eqref{basicFeq}, we want the six numbers
\begin{gather*}
p_{1}, \quad p_{2}, \quad q_{1}, \quad p_{1}+p_{2}, \quad p_{1}p_{2}, 
\quad \text{and} \quad p_{1}p_{2}+q_{1}
\end{gather*}
to be distinct.  Note that the only two even numbers of the six are $p_{1}+p_{2}$ and $p_{1}p_{2}+q_{1}$, and since $p_{1}p_{2} > p_{1}+p_{2}$ they are not equal.  The other ones are distinct primes and a number that is the product of two primes, so they are pairwise different as well.  Therefore, we see that $F_{n',m'}(x)$ is of the form \eqref{basicFeq}, and the result follows from Theorem~\ref{fnm:thm}~\ref{fnmthmseconditem}.
\end{proof}

\begin{theorem}\label{22case:thm}
If $p_{1}, p_{2}, q_{1}$ and $q_{2}$ are distinct odd primes and $e$ is a nonnegative integer, then the non-cyclotomic part of $\Phi_{2^{e}p_{1}p_{2}}(x) + \Phi_{2^{e}q_{1}q_{2}}(x)$ is irreducible provided $p_{2}$ is not equal to one of the eight numbers
\begin{gather*}
\dfrac{q_{1}q_{2}-q_{1}-q_{2}}{p_{1}}, \qquad
q_{1}+q_{2}-p_{1}, \qquad
\dfrac{q_{1}q_{2}+p_{1}-q_{1}}{p_{1}},\qquad
\dfrac{q_{1}q_{2}+p_{1}-q_{2}}{p_{1}}, \\[5pt]
\dfrac{q_{1}(q_{2}-1)}{p_{1}-1}, \quad 
\dfrac{q_{2}(q_{1}-1)}{p_{1}-1}, \quad
\dfrac{q_{1}q_{2}+p_{1}}{p_{1}-1}, \qquad \text{and} \qquad
\dfrac{q_{1}q_{2}+p_{1}-q_{1}-q_{2}}{p_{1}-1}.
\end{gather*}
\end{theorem}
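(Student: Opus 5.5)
We follow the proof of Theorem~\ref{21case:thm} and apply Theorem~\ref{fnm:thm}~\ref{fnmthmfirstitem} with $n' = p_{1}p_{2}$ and $m' = q_{1}q_{2}$; here $t = s = 2$. From \eqref{Fnm:eq}, the divisors of $n'$ with an even number of prime factors and exceeding $1$ are just $p_{1}p_{2}$, and the divisors of $m'$ with an odd number of prime factors are $q_{1}$ and $q_{2}$ (and symmetrically). We obtain
\[
F_{n',m'}(x) = (x^{p_{1}p_{2}}+1)(x^{q_{1}}+1)(x^{q_{2}}+1) + (x^{q_{1}q_{2}}+1)(x^{p_{1}}+1)(x^{p_{2}}+1).
\]
Expanding, the non-constant exponents are
\[
q_{1},\ q_{2},\ q_{1}+q_{2},\ p_{1}p_{2},\ p_{1}p_{2}+q_{1},\ p_{1}p_{2}+q_{2},\ p_{1}p_{2}+q_{1}+q_{2}
\]
from the first product, and
\[
p_{1},\ p_{2},\ p_{1}+p_{2},\ q_{1}q_{2},\ q_{1}q_{2}+p_{1},\ q_{1}q_{2}+p_{2},\ q_{1}q_{2}+p_{1}+p_{2}
\]
from the second, with constant term $2$. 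So $F_{n',m'}(x)$ has the form \eqref{basicFeq} exactly when these fourteen integers are pairwise distinct. The theorem then follows directly from Theorem~\ref{fnm:thm}~\ref{fnmthmfirstitem}.

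The work is the case analysis showing that every possible coincidence is either impossible or forces $p_{2}$ to be one of the eight listed values. Within each list the numbers are clearly distinct (all exponents are positive, and $p_{1}p_{2} > p_{1}+p_{2}$), so only cross-coincidences matter. Parity prunes most of them. In the first list, $q_{1}$, $q_{2}$, $p_{1}p_{2}$ and $p_{1}p_{2}+q_{1}+q_{2}$ are odd, while $q_{1}+q_{2}$, $p_{1}p_{2}+q_{1}$ and $p_{1}p_{2}+q_{2}$ are even. The second list has the same pattern. We compare odd with odd and even with even.

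For the odd ones, $q_{i}$ cannot equal $p_{j}$ or the composites $p_{1}p_{2}$, $q_{1}q_{2}$, and $p_{1}p_{2} \ne q_{1}q_{2}$ by unique factorization. The remaining odd coincidences are:
\begin{itemize}
\item $q_{j} = q_{1}q_{2}+p_{1}+p_{2}$, which is impossible by size; similarly $p_{j} = p_{1}p_{2}+q_{1}+q_{2}$ is impossible;
\item $p_{1}p_{2} = q_{1}q_{2}+p_{1}+p_{2}$, which gives $p_{2} = (q_{1}q_{2}+p_{1})/(p_{1}-1)$;
\item $q_{1}q_{2} = p_{1}p_{2}+q_{1}+q_{2}$, which gives $p_{2} = (q_{1}q_{2}-q_{1}-q_{2})/p_{1}$;
\item $p_{1}p_{2}+q_{1}+q_{2} = q_{1}q_{2}+p_{1}+p_{2}$, which gives $p_{2} = (q_{1}q_{2}+p_{1}-q_{1}-q_{2})/(p_{1}-1)$.
\end{itemize}

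For the even ones:
\begin{itemize}
\item $q_{1}+q_{2} = p_{1}+p_{2}$ gives $p_{2} = q_{1}+q_{2}-p_{1}$;
\item $p_{1}p_{2}+q_{i} = q_{1}q_{2}+p_{1}$ gives the two values $(q_{1}q_{2}+p_{1}-q_{i})/p_{1}$;
\item $p_{1}p_{2}+q_{i} = q_{1}q_{2}+p_{2}$ gives $p_{2}(p_{1}-1) = q_{1}q_{2}-q_{i}$, i.e.\ the values $q_{1}(q_{2}-1)/(p_{1}-1)$ and $q_{2}(q_{1}-1)/(p_{1}-1)$;
\item $q_{1}+q_{2}$ against $q_{1}q_{2}+p_{j}$ is impossible by size, as is $p_{1}+p_{2}$ against $p_{1}p_{2}+q_{j}$.
\end{itemize}
Together these give exactly the eight excluded numbers.

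The main obstacle is bookkeeping: every pair must be covered, and each size-impossible case must be justified cleanly. For instance, $q_{1}q_{2}+p_{1}+p_{2} > q_{j}$ is immediate, while $q_{1}+q_{2} < q_{1}q_{2}$ holds for odd primes.
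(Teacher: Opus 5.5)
Your proposal is correct and is essentially the paper's proof. You apply Theorem~\ref{fnm:thm}~\ref{fnmthmfirstitem} with $n'=p_1p_2$ and $m'=q_1q_2$, use parity and size to reduce the possible coincidences among the fourteen exponents of $F_{n',m'}(x)$ to the same eight equations, and solve each for $p_2$ correctly.

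One small quibble concerns your justification for distinctness within each list. It follows from parity (none of $q_1,q_2,p_1p_2$ is the sum of the other two) and from the fact that a prime never equals a product of two primes. It does not follow from $p_1p_2>p_1+p_2$, which is a cross-list comparison.
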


\begin{proof}[Proof of Theorem~\ref{22case:thm}]
In Theorem~\ref{fnm:thm}, we take $n' = p_{1}p_{2}$ and $m' = q_{1}q_{2}$.  From \eqref{Fnm:eq}, we obtain
\[
F_{n',m'}(x) = (x^{p_{1}p_{2}}+1)(x^{q_{1}}+1)(x^{q_{2}}+1) + (x^{q_{1}q_{2}}+1)(x^{p_{1}}+1)(x^{p_{2}}+1).
\]
For $F_{n',m'}(x)$ to be of the form \eqref{basicFeq}, we want the fourteen numbers
\begin{gather*}
q_{1}, \quad q_{2}, \quad p_{1}p_{2}, \quad q_{1}+q_{2}, \quad p_{1}p_{2}+q_{1}, \quad p_{1}p_{2}+q_{2}, \quad p_{1}p_{2}+q_{1}+q_{2}, \\ 
p_{1}, \quad p_{2}, \quad q_{1}q_{2}, \quad p_{1}+p_{2}, \quad q_{1}q_{2}+p_{1}, \quad q_{1}q_{2}+p_{2}, \quad q_{1}q_{2}+p_{1}+p_{2}
\end{gather*}
to be distinct, eight of which are odd and six of which are even.  
To compare these, we note that the product of two odd primes is always greater than their sum and products of two primes cannot equal primes.  
By further parity and size considerations, we can reduce the possibilities that two of the fourteen numbers are equal to one of the eight equations
\begin{gather*}
q_{1}q_{2} = p_{1}p_{2}+q_{1}+q_{2}, \quad
p_{1}+p_{2} = q_{1}+q_{2}, \quad
q_{1}q_{2}+p_{1} = p_{1}p_{2}+q_{1}, \\[4pt]
q_{1}q_{2}+p_{1} = p_{1}p_{2}+q_{2}, \quad
q_{1}q_{2}+p_{2} = p_{1}p_{2}+q_{1}, \quad
q_{1}q_{2}+p_{2} = p_{1}p_{2}+q_{2}, \\[4pt]
q_{1}q_{2}+p_{1}+p_{2} = p_{1}p_{2}, \quad
q_{1}q_{2}+p_{1}+p_{2} = p_{1}p_{2}+q_{1}+q_{2}.
\end{gather*}
Solving for $p_{2}$ in each case gives the stated result.
\end{proof}

One last result we give in this direction before moving to the more general case is the following. 

\begin{theorem}\label{31case:thm}
If $p_{1}, p_{2}, p_{3}$ and $q_{1}$ are distinct odd primes and $e$ is a nonnegative integer, then the non-cyclotomic part of $\Phi_{2^{e}p_{1}p_{2}p_{3}}(x) + \Phi_{2^{e}q_{1}}(x)$ is irreducible provided the quadruple $(p_{1}, p_{2}, p_{3}, q_{1})$ does not satisfy one of 39 equations which are linear in each variable, each of which leads to a unique rational number $p_{3}$ for every choice of primes $p_{1}, p_{2}$ and $q_{1}$ with at least one such choice giving a prime $p_{3}$.
\end{theorem}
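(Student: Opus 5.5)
We follow the template of the proofs of Theorem~\ref{21case:thm} and Theorem~\ref{22case:thm}. In Theorem~\ref{fnm:thm} we take $n' = p_{1}p_{2}p_{3}$ and $m' = q_{1}$, so $t = 3$ and $s = 1$ have the same parity and part~\ref{fnmthmfirstitem} applies. By \eqref{Fnm:eq}, the divisors of $n'$ with an even number of prime factors exceeding $1$ are $p_{1}p_{2}$, $p_{1}p_{3}$ and $p_{2}p_{3}$. Those with an odd number are $p_{1}$, $p_{2}$, $p_{3}$ and $p_{1}p_{2}p_{3}$. The only odd-$\omega$ divisor of $m'$ is $q_{1}$, and $m'$ has no even-$\omega$ divisor exceeding $1$. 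Hence
\[
F_{n',m'}(x) = (x^{p_{1}p_{2}}+1)(x^{p_{1}p_{3}}+1)(x^{p_{2}p_{3}}+1)(x^{q_{1}}+1) + (x^{p_{1}}+1)(x^{p_{2}}+1)(x^{p_{3}}+1)(x^{p_{1}p_{2}p_{3}}+1).
\]
It therefore suffices to show that, outside the exceptional equations, the $30$ exponents are pairwise distinct. These are the $15$ nonempty subset sums of $\{p_{1}p_{2}, p_{1}p_{3}, p_{2}p_{3}, q_{1}\}$ and the $15$ nonempty subset sums of $\{p_{1}, p_{2}, p_{3}, p_{1}p_{2}p_{3}\}$. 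Once this holds, $F_{n',m'}(x)$ has the form \eqref{basicFeq} with prime $2$, and Theorem~\ref{fnm:thm} yields the conclusion for every $e \ge 0$.

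Within each family the subset sums are automatically distinct, because each generating set is "superincreasing up to an easy check." For instance, in the second family $p_{1}p_{2}p_{3}$ exceeds $p_{1}+p_{2}+p_{3}$, and the subset sums of three distinct odd primes are distinct by parity together with primality and size arguments. This has to be verified carefully, including for the first family. A sum of the form $p_{i}p_{j} + p_{i}p_{k}$ could in principle equal another sum only through a genuine linear relation, and such relations should reduce to $p_{j}=p_{k}$ or to parity contradictions.

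The bulk of the work is the cross-family equalities. First we sort by parity: a sum of $k$ odd numbers has the parity of $k$, so only sums with the same number of summands modulo $2$ can coincide. Next, size considerations eliminate many pairs, for example any sum containing $p_{1}p_{2}p_{3}$ against sums not containing a comparably large term. Each surviving equation has the form $\sum(\text{chosen terms}) = \sum(\text{chosen terms})$. In the variable $p_{3}$ it is linear, since every generator is at most linear in $p_{3}$, and it is likewise linear in each of the other variables. Solving for $p_{3}$ gives a unique rational value in terms of $p_{1}, p_{2}, q_{1}$, unless the coefficient of $p_{3}$ vanishes identically. In that case the equation becomes one in $p_{1}, p_{2}, q_{1}$ alone, and we handle it by parity or by distinctness of the primes, discarding it or rerouting it through another variable.

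The main obstacle is the bookkeeping: enumerating all $\binom{30}{2}$ potential coincidences, and confirming that after parity and size pruning exactly $39$ equations remain. For each of them we must also check that the coefficient of $p_{3}$ is nonzero, so that $p_{3}$ is uniquely determined, and exhibit one choice of primes giving a prime $p_{3}$, which shows the equation is genuinely needed. I would organize this by the number of summands on each side and, if necessary, verify the count by a short computer enumeration over symbolic subset pairs.
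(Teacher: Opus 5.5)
Your framework is the one the paper intends: it is how the paper proves Theorems~\ref{21case:thm} and~\ref{22case:thm}, and the $39$ listed equations come from the same exponent comparison. You apply Theorem~\ref{fnm:thm}\ref{fnmthmfirstitem} with $n'=p_1p_2p_3$ and $m'=q_1$. Your expression for $F_{n',m'}(x)$ is correct, and you correctly reduce the task to making its $30$ nonconstant exponents distinct. The second family $\{p_1,p_2,p_3,p_1p_2p_3\}$ is indeed harmless.

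\textbf{Genuine gap.} Your claim that the subset sums of the first family $\{p_1p_2,p_1p_3,p_2p_3,q_1\}$ are automatically distinct is false. The heuristic that such a relation reduces to $p_j=p_k$ or a parity contradiction fails because $q_1$ is a free prime, unrelated in size to the products. The within-family coincidences
\[
q_1=p_1p_2+p_1p_3+p_2p_3,\qquad p_ip_j=p_ip_k+p_jp_k+q_1,\qquad p_ip_j+p_ip_k=p_jp_k+q_1
\]
pass the parity test and have prime solutions. Equivalently, these are the seven equations $q_1=\pm p_1p_2\pm p_1p_3\pm p_2p_3$ with not all signs negative.

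A concrete failure is $(p_1,p_2,p_3,q_1)=(3,5,17,151)$. Here $151=15+51+85$, so $x^{151}$ occurs twice in the first product and $F_{n',m'}(x)$ has coefficient $2$ at $x^{151}$. On the other hand, no exponent of the first product (subset sums of $\{15,51,85,151\}$) equals one of the second product (subset sums of $\{3,5,17,255\}$). An argument that excludes only cross-family coincidences would therefore wrongly conclude that $F_{n',m'}(x)$ has the form \eqref{basicFeq} here. The cross-family comparisons produce only $32$ of the paper's $39$ equations. The missing seven are exactly the within-family ones above, so they must be included as exceptional equations.

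\textbf{Secondary point.} Parity and size alone do not cut the list down to $39$. For example, $p_1p_2+q_1=p_1p_2p_3+p_1$ and $p_1p_2p_3=p_1p_2+p_1p_3+q_1$ pass both tests. They are discarded only because they force $p_1\mid q_1$, so $q_1$ is composite or equals $p_1$. Your step of checking which equations actually admit prime solutions is therefore essential to reach exactly $39$, not merely a confirmation.
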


One can list the $39$ equations indicating what $p_{3}$ equals explicitly.  However, it is easier to write most of these equations in terms of what $q_{1}$ equals.  The 39 equations can be written as
\begin{gather*}
q_{1} = p_{1}+p_{2}+p_{3},  \quad
q_{1} = p_{1}p_{2}+p_{1}p_{3}+p_{2}p_{3},   \quad
q_{1} = p_{1}p_{2}p_{3}+p_{2}+p_{3},  \\[3pt]
q_{1} = p_{1}p_{2}p_{3}+p_{1}+p_{3},   \quad
q_{1} = p_{1}p_{2}p_{3}+p_{1}+p_{2},   \quad
q_{1} = p_{1}p_{2}-p_{1}p_{3}-p_{2}p_{3},  \\[3pt]
q_{1} = -p_{1}p_{2}+p_{1}p_{3}-p_{2}p_{3},  \quad
q_{1} = -p_{1}p_{2}-p_{1}p_{3}+p_{2}p_{3},  \quad
q_{1} = -p_{1}p_{3}+p_{1}+p_{2},  \\[3pt]
q_{1} = -p_{2}p_{3}+p_{1}+p_{2},  \quad
q_{1} = -p_{1}p_{2}+p_{1}+p_{3},  \quad
q_{1} = -p_{2}p_{3}+p_{1}+p_{3},  \\[3pt]
q_{1} = -p_{1}p_{2}+p_{2}+p_{3},  \quad
q_{1} = -p_{1}p_{3}+p_{2}+p_{3},  \quad
q_{1} = -p_{1}p_{2}+p_{1}p_{3}+p_{2}p_{3},  \\[3pt]
q_{1} = p_{1}p_{2}-p_{1}p_{3}+p_{2}p_{3},  \quad
q_{1} = p_{1}p_{2}+p_{1}p_{3}-p_{2}p_{3},  \quad
q_{1} = p_{1}p_{2}p_{3}-p_{1}p_{2}+p_{3},  \\[3pt]
q_{1} = p_{1}p_{2}p_{3}-p_{1}p_{2}-p_{1}p_{3}-p_{2}p_{3}+p_{3},  \quad
q_{1} = p_{1}p_{2}p_{3}-p_{1}p_{3}+p_{2},  \\[3pt]
q_{1} = p_{1}p_{2}p_{3}-p_{1}p_{2}-p_{1}p_{3}-p_{2}p_{3}+p_{2},  \quad
q_{1} = p_{1}p_{2}p_{3}-p_{2}p_{3}+p_{1},  \\[3pt]
q_{1} = p_{1}p_{2}p_{3}-p_{1}p_{2}-p_{1}p_{3}-p_{2}p_{3}+p_{1},  \quad
q_{1} = p_{1}p_{2}p_{3}-p_{1}p_{2}+p_{1}+p_{2}+p_{3},  \\[3pt]
q_{1} = p_{1}p_{2}p_{3}-p_{1}p_{3}+p_{1}+p_{2}+p_{3},  \quad
q_{1} = p_{1}p_{2}p_{3}-p_{2}p_{3}+p_{1}+p_{2}+p_{3},  \\[3pt]
q_{1} = p_{1}p_{2}p_{3}-p_{1}p_{3}-p_{2}p_{3}+p_{2}+p_{3},  \quad
q_{1} = p_{1}p_{2}p_{3}-p_{1}p_{2}-p_{2}p_{3}+p_{2}+p_{3},  \\[3pt]
q_{1} = p_{1}p_{2}p_{3}-p_{1}p_{2}-p_{1}p_{3}+p_{2}+p_{3},  \quad
q_{1} = p_{1}p_{2}p_{3}-p_{1}p_{3}-p_{2}p_{3}+p_{1}+p_{3},  \\[3pt]
q_{1} = p_{1}p_{2}p_{3}-p_{1}p_{2}-p_{2}p_{3}+p_{1}+p_{3},  \quad
q_{1} = p_{1}p_{2}p_{3}-p_{1}p_{2}-p_{1}p_{3}+p_{1}+p_{3},  \\[3pt]
q_{1} = p_{1}p_{2}p_{3}-p_{1}p_{3}-p_{2}p_{3}+p_{1}+p_{2},  \quad
q_{1} = p_{1}p_{2}p_{3}-p_{1}p_{2}-p_{2}p_{3}+p_{1}+p_{2},  \\[3pt]
q_{1} = p_{1}p_{2}p_{3}-p_{1}p_{2}-p_{1}p_{3}+p_{1}+p_{2},  \quad
q_{1} = p_{1}p_{2}p_{3}-p_{1}p_{2}-p_{1}p_{3}-p_{2}p_{3}+p_{1}+p_{2}+p_{3}, \\[4pt]
p_{3} = p_{1}p_{2}-p_{1}-p_{2},  \quad
p_{3} = \dfrac{p_{1}+p_{2}}{p_{1}-1},  \quad \text{ and } \quad
p_{3} = \dfrac{p_{1}+p_{2}}{p_{2}-1}.
\end{gather*}

The wording at the end of this last theorem is to emphasize a change that occurs when we extend these results further.  As we extend the approach further, considering $F_{n,m}(x)$ with $n$ and $m$ having more prime factors, we no longer can take one of the prime unknowns and make an exceptional list of equations in that unknown in terms of the others as in Theorem~\ref{22case:thm} and Theorem~\ref{31case:thm}.  To see this suppose  $n = p_{1}p_{2}p_{3}$ and $m = q_{1}q_{2}$.  Then observe that the equations
\begin{gather*}
p_{1} + p_{2} = q_{1} + q_{2}, \qquad
p_{1} + p_{3} = q_{1} + q_{2}, \qquad
p_{2} + p_{3} = q_{1} + q_{2}, \\[4 pt]
p_{1} + p_{2} + p_{3} = q_{1} \qquad \text{and} \qquad
p_{1} + p_{2} + p_{3} = q_{2}
\end{gather*}
arise from equating exponents in $F_{n,m}(x)$, making it impossible to always solve for one fixed prime variable in terms of the others.  For $n = p_{1}p_{2}p_{3}p_{4}$ and $m = q_{1}$, we similarly have the equations
\begin{gather*}
p_{1} + p_{2} + p_{4} = q_{1}, \qquad
p_{1} + p_{3} + p_{4} = q_{1}, \\[4 pt]
p_{2} + p_{3} + p_{4} = q_{1} \qquad \text{and} \qquad
p_{1} + p_{2} + p_{3} = p_{1} p_{2}
\end{gather*}
arising from equating exponents in $F_{n,m}(x)$.  Thus, similar results to Theorem~\ref{22case:thm} and Theorem~\ref{31case:thm} begin to take a slightly different form when we include more prime variables.  There is another phenomenon that occurs as we move forward that is associated with the example with $n$ divisible by $165$ in Comment~\ref{165comment}.  For example, in the case where $n = p_{1}p_{2}p_{3}p_{4}$ and $m = q_{1}$, the equation 
\begin{equation}\label{moreon165:eq}
1 + p_{1} p_{2} = p_{2} + p_{3},
\end{equation}
which has solutions in odd primes, does \textit{not} arise from equating exponents in $F_{n,m}(x)$ because of the summand $1$, but when we multiply both sides by $p_{4}$ we obtain an equation that does arise when equating exponents in $F_{n,m}(x)$.  With these comments in mind, some care is needed in establishing the following.

\begin{lemma}\label{maintsfnmthm}
Fix positive integers $t$ and $s$.  Set $m = p'_{1} \cdots p'_{s}$ and $n = p'_{s+1} \cdots p'_{s+t}$ where $p'_{1}, \ldots, p'_{s+t}$ are variables representing $s+t$ distinct odd primes.  Then there are sets of equations $\mathcal E'$ and $\mathcal E'_{2}, \mathcal E'_{3}, \ldots, \mathcal E'_{s+t}$ satisfying 
\[
\mathcal E' = \mathcal E'_{2} \,\textstyle\bigcup\, \mathcal E'_{3} \,\textstyle\bigcup\, \cdots \,\textstyle\bigcup\, \mathcal E'_{s+t},
\]
\[
|\mathcal E'| \le 2^{2^{t} + 2^{s}-1} + 2^{2^{t} + 1} + 2^{2^{s} + 1},
\]
for each equation in $\mathcal E'_{j}$, there are $A(p'_{1}, \ldots, p'_{j-1})$ and $B(p'_{1}, \ldots, p'_{j-1})$ in $\mathbb Z[p'_{1},\ldots,p'_{j-1}]$, both linear in each variable, such that the equation takes the form
\begin{equation}\label{eqform}
p'_{j} = \dfrac{A(p'_{1}, \ldots, p'_{j-1})}{B(p'_{1}, \ldots, p'_{j-1})},
\end{equation}
and $F_{n,m}(x)$ is of the form \eqref{basicFeq} unless the primes $p'_{1} \cdots p'_{s+t}$ satisfy an equation in $\mathcal E'$ of the from \eqref{eqform} with $B(p'_{1}, \ldots, p'_{j-1}) \ne 0$.
\end{lemma}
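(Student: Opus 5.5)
The plan is to translate ``$F_{n,m}(x)$ is of the form \eqref{basicFeq}'' into a finite list of polynomial identities among the primes. Each identity is then turned into an equation of the form \eqref{eqform} by solving for its highest-indexed variable, recursing when the leading coefficient vanishes.

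\textbf{Step 1 (the sets and the failure equations).} Write $F_{n,m}(x) = F^{(1)}_{n,m}(x) + F^{(2)}_{n,m}(x)$ as in the proof of Theorem~\ref{bigenoughgapresult}. Let $S_{1}$ and $S_{2}$ be the index sets of the two products, as in Example~\ref{4primeseacheg}. Every element of $S_{1} \cup S_{2}$ is a divisor $d>1$ of $nm$, and $S_{1} \cap S_{2} = \emptyset$. Expanding, $F^{(i)}_{n,m}(x) = \sum_{U \subseteq S_{i}} x^{\sigma(U)}$ with $\sigma(U) = \sum_{d \in U} d$. Hence $F_{n,m}(x)$ has the form \eqref{basicFeq} (with $p=2$) exactly when the following three conditions hold:
\begin{itemize}
\item the subset sums of $S_{1}$ are pairwise distinct;
\item the subset sums of $S_{2}$ are pairwise distinct;
\item no nonempty subset sum of $S_{1}$ equals a nonempty subset sum of $S_{2}$.
\end{itemize}
Each violation is an equality $\sigma(U) = \sigma(V)$. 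Cancelling common elements, we may take $U \cap V = \emptyset$ and $U \cup V \ne \emptyset$. We view each divisor as the squarefree monomial in $p'_{1}, \ldots, p'_{s+t}$ it represents. The equality then becomes $G(p'_{1},\ldots,p'_{s+t}) = 0$, where $G = \sum_{d\in U} d - \sum_{d \in V} d$. Here $G$ is multilinear, has coefficients $\pm 1$ on pairwise distinct nonconstant monomials (since the divisors are distinct and $1 \notin S_{1}\cup S_{2}$), and is therefore not the zero polynomial and has no constant term. Counting pairs $(U,V)$ for each of the three conditions, using $|S_{1}|,|S_{2}| \le 2^{t-1}+2^{s-1}$, gives roughly the three terms of the stated bound. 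The $2^{2^{t}+2^{s}-1}$ term comes from the cross condition, and the terms $2^{2^{t}+1}$ and $2^{2^{s}+1}$ come from the internal conditions.

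\textbf{Step 2 (solving for the top variable).} For a nonzero multilinear $G$ without constant term, let $j$ be the largest index with $p'_{j}$ occurring in $G$, and write $G = B\,p'_{j} - A$ with $A, B \in \mathbb Z[p'_{1},\ldots,p'_{j-1}]$ multilinear and $B \not\equiv 0$. If $j=1$, then $G = \pm p'_{1}$ or $B$ is a nonzero constant, and $G=0$ has no prime solution or gives $p'_{1} = A/B$. In general, if $B(p'_{1},\ldots,p'_{j-1}) \ne 0$ at the given primes, then $G=0$ is the equation $p'_{j} = A/B$, which we place in $\mathcal E'_{j}$. If $B$ vanishes at the primes, then $A$ vanishes too. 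In that case we replace $G$ by $B$, which is again nonzero, multilinear, with integer coefficients, and involves only $p'_{1},\ldots,p'_{j-1}$, and repeat.

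This is exactly the phenomenon of \eqref{moreon165:eq}: there $G = p_{4}(1+p_{1}p_{2}-p_{2}-p_{3})$. One twist is that $B$ may acquire a nonzero constant term, as $1+p_{1}p_{2}-p_{2}-p_{3}$ does. This is harmless: a nonzero constant $B$ never vanishes. Otherwise we keep peeling off the top variable. The recursion strictly lowers the top index, so it terminates after at most $s+t$ steps. It ends either with a genuine equation of the form \eqref{eqform} having $B\ne0$ at the primes, or with a nonzero constant, which cannot vanish. So every failure of \eqref{basicFeq} forces one of the collected equations with $B \ne 0$.

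\textbf{Step 3 (bookkeeping) and the main obstacle.} Each original equation spawns at most one derived equation per level. One must check that the total still fits inside the stated bound on $|\mathcal E'|$; this is the step I expect to need the most care. I would first try to absorb the derived equations into the counted ones. A derived $B$ is the coefficient of $p'_{j}$ in a difference of subset sums, hence itself a difference of subset sums of divisors of $nm/p'_{j}$ (with $1$ now allowed). Such equations can be charged to the slack in the counting of pairs $(U,V)$, since not all ordered pairs give distinct reduced equations. If that fails, I would sharpen the estimates of $|S_{1}|,|S_{2}|$ to create enough room for the factor $s+t$.
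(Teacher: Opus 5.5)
Your characterization in Step~1 and the peeling argument in Step~2 are sound. Together they show that every failure of \eqref{basicFeq} forces some collected equation $p'_{j}=A/B$ with $B\neq 0$ at the primes. (Chain members whose top variable is $p'_{1}$ read $p'_{1}\in\{0,\pm1\}$ and can be dropped, since the lemma has no $\mathcal E'_{1}$.)

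The gap is Step~3. The bound on $|\mathcal E'|$ is part of the statement, you do not prove it, and the repairs you suggest cannot supply it. Your accounting in Step~1 is also wrong:
\begin{itemize}
\item The internal conditions involve divisors of both $n$ and $m$, so they cannot be what the terms $2^{2^{t}+1}$ and $2^{2^{s}+1}$ bound. For $t=s=3$ there are already $1932>2^{9}+2^{9}$ internal equations after cancelling common elements.
\item In fact $|S_{1}|=|S_{2}|=a:=2^{t-1}+2^{s-1}-1$ exactly, so there is nothing to sharpen.
\item All three conditions together give at most $2\binom{2^{a}}{2}+(2^{a}-1)^{2}<2^{2a+1}=2^{2^{t}+2^{s}-1}$ equations. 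The last two terms of the bound are therefore untouched, and they are exactly the budget for your derived equations.
\end{itemize}
A chain can contribute up to $s+t$ equations, so naive charging overshoots by a factor of roughly $s+t$. Cancelling common elements of $U$ and $V$ does not rescue this: it leaves roughly $4^{a}+3^{a}$ original equations against a first term of $2\cdot 4^{a}$.

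The missing idea is that derived coefficients stay in a small family closed under ``take the coefficient of the top variable.'' Suppose $p'_{j}\mid n$. The monomials of $G$ divisible by $p'_{j}$ are then divisors of $n$ alone, and dividing by $p'_{j}$ flips the parity of $\omega$. Hence $B$ is a difference of two exponents occurring in
\[
\prod_{d\mid n,\ \omega(d)\text{ even}}(x^{d}+1)+\prod_{d\mid n,\ \omega(d)\text{ odd}}(x^{d}+1),
\]
where $d=1$ is now included. If one of the two exponents is $0$, then $B$ has constant sign at primes and can be ignored. Applying the operation to such a difference again gives a difference of the same type.

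There are $2^{2^{t-1}+1}-2$ nonzero exponents here. So this family yields at most $\binom{2^{2^{t-1}+1}-2}{2}\le 2^{2^{t}+1}$ equations, and symmetrically at most $2^{2^{s}+1}$ for $m$. Every polynomial in every chain therefore lies in a set of at most the stated size, and each gives one equation of the form \eqref{eqform}.

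These families are the paper's $\mathcal E^{(1)}_{j}$ and $\mathcal E^{(2)}_{j}$, and the closure is its observation about $\sigma_{1},\sigma_{2},\sigma'_{1},\sigma'_{2}$. With the closure available, the paper needs no recursion. It solves each equation of $\mathcal E_{t+s}$ once for its top variable. It excludes $B=0$ because $B=0$ is itself an equation of $\mathcal E^{(1)}_{j-1}$ or $\mathcal E^{(2)}_{j-1}$, already handled at an earlier level.
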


\begin{comment}
We do not care about the relative sizes of the $p'_{j}$ in this lemma.
We treat these primes as unknowns and seek equations they must satisfy if $F_{n,m}(x)$ is \textit{not} of the form \eqref{basicFeq}.  We do not mind including more equations than needed for this purpose, as long as solutions to our equations include all possible choices of $n$ and $m$ for which $F_{n,m}(x)$ is not of the form \eqref{basicFeq}.  In particular, we will certainly consider equations below that have no solutions in odd primes.  For example, in the proof of Theorem~\ref{22case:thm}, one might naturally have started with $\binom{14}{2} = 91$ equations, but only $8$ were needed in the end.  We will be content with considering all $91$ in this proof.
\end{comment}

\begin{proof}[Proof of Lemma~\ref{maintsfnmthm}]
For $1 \le j \le t+s$, we let $\mathcal S_{j}$ denote those sum of divisors of $n$ and $m$ occurring as non-zero exponents in $F_{n,m}(x)$ which only involve the first $j$ primes in our ordering.  Thus, $\mathcal S_{1} = \{ p'_{1} \}$ and either $\mathcal S_{2} = \{ p'_{1}, p'_{2}, p'_{1}+p'_{2}, p'_{1}p'_{2} \}$ or $\mathcal S_{2} = \{ p'_{1}, p'_{2} \}$, depending on whether $s > 1$ or $s = 1$, respectively.  We set $\mathcal E^{(0)}_{j}$ to be the equations obtained by setting two distinct elements of $\mathcal S_{j}$ equal.  
We will want some additional equations that use the additional factor $x+1$ that comes from considering the divisor $d = 1$ not present in \eqref{Fnm:eq}.  These additional equations account for equations like the one given in \eqref{moreon165:eq} that will occur in the argument below.  So we define $\mathcal E^{(1)}_{j}$ to be equations, only involving the first $j$ primes, formed by equating two distinct non-zero exponents from 
\[
\prod_{\substack{d \mid n \\ \omega(d) \equiv 0 {\hskip -6pt}\pmod{2}}} {\hskip -8pt}\big( x^{d} + 1 \big) 
\,+ \prod_{\substack{d \mid n \\ \omega(d) \equiv 1 {\hskip -6pt}\pmod{2}}} {\hskip -8pt}\big( x^{d} + 1 \big)
\]
and $\mathcal E^{(2)}_{j}$ to be equations, only involving the first $j$ primes, formed by equating two distinct non-zero exponents from 
\[
\prod_{\substack{d' \mid m \\ \omega(d') \equiv 0 {\hskip -6pt}\pmod{2}}} {\hskip -8pt}\big( x^{d'} + 1 \big) 
\,+ \prod_{\substack{d' \mid m \\ \omega(d') \equiv 1 {\hskip -6pt}\pmod{2}}} {\hskip -8pt}\big( x^{d'} + 1 \big).
\]
Every exponent in the terms of the expansion of the products above defining $\mathcal E^{(1)}_{j}$ and $\mathcal E^{(2)}_{j}$ either includes $1$ coming from the $x$ in the factor $x+1$ or else appears also in $\mathcal S_{t+s}$, that is as an exponent in a term of $F_{n,m}(x)$. 

An important observation for us is that if $\sigma_{1}$ and $\sigma_{2}$ are both a sum of divisors of $n$ (similarly for $m$), each coming from an exponent of one of the products in the definition of $\mathcal E^{(1)}_{j}$, and each term of $\sigma_{1}-\sigma_{2}$ is divisible by the prime $p'_{j}$, then there are $\sigma'_{1}$ and $\sigma'_{2}$ which are both a sum of divisors of $n$, each coming from one of the products in the definition of $\mathcal E^{(1)}_{j}$, such that $\sigma_{1}-\sigma_{2} = p'_{j}\sigma'_{1}-p'_{j}\sigma'_{2}$ and the equation $\sigma'_{1} = \sigma'_{2}$ is in $\mathcal E^{(1)}_{j-1}$.
To see this notice that there must be a $\sigma_{0}$ that is a sum of divisors of $n$ such that $\sigma_{1} = p_{j} \sigma'_{1} + \sigma_{0}$ and $\sigma_{2} = p_{j} \sigma'_{2} + \sigma_{0}$.  Here, we will have $\sigma_{0} = 0$ if $\sigma_{1}$ and $\sigma_{2}$ are exponents coming from different products in the definition of $\mathcal E^{(1)}_{j}$.  Each term in $p_{j} \sigma'_{1}$ is necessarily squarefree and all terms have the same parity in its number of prime factors, so the same is true for the terms in $\sigma'_{1}$.  Similarly, the terms in $\sigma'_{2}$ are squarefree and all have the same parity in the number of their prime factors.  Every prime factor of any term in $\sigma'_{1}$ or $\sigma'_{2}$ is necessarily from among $p'_{1}, \ldots, p'_{j-1}$, so it follows that $\sigma'_{1} = \sigma'_{2}$ is in $\mathcal E^{(1)}_{j-1}$.

We define $\mathcal E_{j} = \mathcal E^{(0)}_{j} \cup \mathcal E^{(1)}_{j} \cup \mathcal E^{(2)}_{j}$.  In way of examples, we note that $\mathcal E_{1} = \{  \}$, and we see that in the case that $s > 1$ we have
\begin{align*}
\mathcal E_{2} &=
\{ p'_{1} = p'_{2}, p'_{1} = p'_{1}+p'_{2}, p'_{1} = p'_{1}p'_{2}, p'_{2} = p'_{1}+p'_{2}, p'_{2} = p'_{1}p'_{2},  p'_{1}+p'_{2} = p'_{1}p'_{2}, \\
&\qquad 1 = p'_{1}, 1 = p'_{2}, 1 = p'_{1}+p'_{2}, 1 = p'_{1}p'_{2}, 1 = 1 + p'_{1}p'_{2},
p'_{1} =  1 + p'_{1}p'_{2},  \\
&\qquad p'_{2} = 1 + p'_{1}p'_{2}, p'_{1}+p'_{2} = 1 + p'_{1}p'_{2}, p'_{1}p'_{2} = 1 + p'_{1}p'_{2} \}, 
\end{align*}
and if $s = 1$ we have
\[
\mathcal E_{2} =
\{ p'_{1} = p'_{2}, 1 = p'_{2}, 1 = p'_{1} \}.
\]
None of these equations have solutions in distinct odd primes $p'_{1}$ and $p'_{2}$, though one can expect many equations in $\mathcal E_{j}$ with solutions as $j$ increases. 
We also note that
\[
\mathcal E_{1} \subset \mathcal E_{2} \subset \cdots \subset \mathcal E_{t+s},
\]
and the set $\mathcal E_{t+s}$ includes the set of equations $\mathcal E^{(0)}_{t+s}$ which have the property that if none of them are satisfied for a choice of $p'_{j}$, then $F_{n,m}(x)$ will be of the form \eqref{basicFeq}.  

We bound the number of elements in $\mathcal E_{t+s}$.  
With an eye toward counting first the number of elements of $\mathcal S_{t+s}$ coming from multiplying out the first two products in \eqref{Fnm:eq}, observe that there are $2^{t-1}-1$ divisors of $n$ that are not $1$ and contain an even number of prime divisors and $2^{s-1}$ divisors of $m$ that contain an odd number of prime divisors.  If we expand the first two products in \eqref{Fnm:eq}, each term will consist of a sum of a subset of these $2^{t-1}-1 + 2^{s-1}$ divisors, resulting in $2^{2^{t-1}-1 + 2^{s-1}}$ terms including the term $1$ of degree $0$.  This then accounts for $2^{2^{t-1}-1 + 2^{s-1}}-1$ elements of $\mathcal S_{t+s}$.  Similarly, we get $2^{2^{s-1}-1 + 2^{t-1}}-1$, the same number, of elements of $\mathcal S_{t+s}$ arising from the last two products in \eqref{Fnm:eq}.  Therefore, we deduce that
\[
\big| \mathcal S_{t+s} \big| = 2 \big( 2^{2^{t-1} + 2^{s-1} -1}-1 \big)
= 2^{2^{t-1} + 2^{s-1}}-2.
\]
We deduce that the number of equations arising from setting two different elements of $\mathcal S_{t+s}$ equal is 
\[
\big| \mathcal E^{(0)}_{t+s} \big| = \binom{|\mathcal S_{t+s}|}{2} \le 2^{2^{t} + 2^{s}-1}.
\]
Similarly, we see that
\[
\big| \mathcal E^{(1)}_{t+s} \big| = \binom{2^{2^{t-1}}+2^{2^{t-1}}-2}{2} \le 2^{2^{t} + 1}
\qquad
\text{and}
\qquad
\big| \mathcal E^{(2)}_{t+s} \big| = \binom{2^{2^{s-1}}+2^{2^{s-1}}-2}{2} \le 2^{2^{s} + 1}.
\]
Many of the equations appearing in $\mathcal E^{(1)}_{t+s}$ and $\mathcal E^{(2)}_{t+s}$ already appear in $\mathcal E^{(0)}_{t+s}$, but we do not mind the overcounting.  
Thus, we only use that
\[
\big| \mathcal E_{t+s} \big| \le 2^{2^{t} + 2^{s}-1} + 2^{2^{t} + 1} + 2^{2^{s} + 1}.
\]

We proceed, as $j$ varies from $1$ to $t+s$, by successively considering equations in $\mathcal E_{j}$, solving each new equation formed, with each equation involving only the first $j$ primes.  As noted above, there are no solutions for the equations in $\mathcal E_{1}$ and $\mathcal E_{2}$ involving the first two odd primes in our ordering.  Observe that the proof of Theorem~\ref{21case:thm} implies that, if $s = 1$, then there are also no solutions for the equations in $\mathcal E^{(0)}_{3}$, and the same holds for $\mathcal E^{(1)}_{3}$ and $\mathcal E^{(2)}_{3}$ and so $\mathcal E_{3}$.  By symmetry,  if $s = 2$, then also there are no solutions to equations in $\mathcal E_{3}$.  On the other hand, the list of equations arising from Theorem~\ref{31case:thm} indicates that if $s \ge 3$, then this is no longer true as the last three of the 39 equations listed after Theorem~\ref{31case:thm} lead to the possible equations 
\[
p'_{1} + p'_{2} + p'_{3} = p'_{1} p'_{2}, \qquad
p'_{1} + p'_{2} + p'_{3} = p'_{1} p'_{3}, \qquad \text{and} \qquad
p'_{1} + p'_{2} + p'_{3} = p'_{2} p'_{3}
\]
coming from $\mathcal E^{(0)}_{3}$, which will have solutions in primes, seen by considering some arrangement of $3$, $7$ and $11$ for each equation.  Furthermore, there are additional equations in $\mathcal E^{(2)}_{3}$ in this case with solutions, namely
\begin{gather*}
1 + p'_{1}p'_{2} = p'_{1} + p'_{3}, \quad
1 + p'_{1}p'_{2} = p'_{2} + p'_{3}, \quad
1 + p'_{1}p'_{3} = p'_{1} + p'_{2}, \\[4pt]
1 + p'_{1}p'_{3} = p'_{2} + p'_{3}, \quad
1 + p'_{2}p'_{3} = p'_{1} + p'_{2} \quad \text{and} \quad
1 + p'_{2}p'_{3} = p'_{1} + p'_{3}, 
\end{gather*}
seen by considering some arrangement of $3$, $5$ and $11$ for each equation.

For each new $j \in [1, t+s]$ that we consider, we look at the equations in $\mathcal E_{j}$ and discard ones that have already occurred when we dealt with $\mathcal E_{i}$ for $i < j$.  Necessarily then every new equation will involve $p'_{j}$.  The idea is to solve for $p'_{j}$ in each such equation.  However, when we solve for $p'_{j}$ in each such equation, we need to treat the possibility that the resulting denominator can be $0$ depending on the prime choices for $p'_{1}, \ldots, p'_{j-1}$.  
By rearranging, we can write each new equation as
\begin{equation}\label{apjeqb:eq}
B(p'_{1},\ldots,p'_{j-1}) \,p'_{j} = A(p'_{1},\ldots,p'_{j-1}),
\end{equation}
where $A(p'_{1}, \ldots, p'_{j-1})$ and $B(p'_{1}, \ldots, p'_{j-1})$ are in $\mathbb Z[p'_{1},\ldots,p'_{j-1}]$ and both are linear in each of the variables $p'_{1},\ldots,p'_{j-1}$.  
We can view \eqref{apjeqb:eq} as one of the no more than $2^{2^{t} + 2^{s}-1} + 2^{2^{t} + 1} + 2^{2^{s} + 1}$ equations mentioned in the theorem.  
As \eqref{apjeqb:eq} came from setting two different exponents equal, we do not have that both $B(p'_{1},\ldots,p'_{j-1})$ and $A(p'_{1},\ldots,p'_{j-1})$ as polynomials are identically $0$.  If the terms involving $p'_{j}$ cancelled when rearranging, then $B(p'_{1},\ldots,p'_{j-1})$ would be identically $0$ and \eqref{apjeqb:eq} is simply $A(p'_{1},\ldots,p'_{j-1}) = 0$, which would have been an equation considered in $\mathcal E_{j-1}$.  So we are only interested in the case where $B(p'_{1},\ldots,p'_{j-1})$ is not identically $0$.  We do not need to treat the special case where $A(p'_{1},\ldots,p'_{j-1})$ is identically $0$ separately from other cases for $A(p'_{1},\ldots,p'_{j-1})$.
If $B(p'_{1},\ldots,p'_{j-1}) p'_{j}$ is expanded, all of its terms involve $p'_{j}$. 
As a consequence the terms in $B(p'_{1},\ldots,p'_{j-1}) p'_{j}$ are only divisors of $n$ if $j > s$ or are only divisors of $m$ if $j \le s$.  
Since each term includes the factor $p'_{j}$, clearly $1$ is not a term in $B(p'_{1},\ldots,p'_{j-1}) p'_{j}$.  We deduce that $B(p'_{1},\ldots,p'_{j-1}) p'_{j}$ arises as a difference of two exponents appearing in $F_{n,m}(x)$ and consequently $B(p'_{1},\ldots,p'_{j-1}) = 0$ is an equation in $\mathcal E^{(1)}_{j-1}$ if $p'_{j} \mid n$ or in $\mathcal E^{(2)}_{j-1}$ if $p'_{j} \mid m$ (see the discussion involving $\sigma_{1}, \sigma_{2}, \sigma'_{1}$ and $\sigma'_{2}$ above).  As a consequence, we would have already accounted for the possibility that $B(p'_{1},\ldots,p'_{j-1}) = 0$ and hence that $B(p'_{1},\ldots,p'_{j-1}) = A(p'_{1},\ldots,p'_{j-1}) = 0$ in \eqref{apjeqb:eq} when we worked with equations in $\mathcal E_{i}$ for some $i < j$.  In other words, solutions to \eqref{apjeqb:eq} where $B(p'_{1},\ldots,p'_{j-1}) = 0$ have already been eliminated when solving $B(p'_{1},\ldots,p'_{j-1}) = 0$ in $\mathcal E_{i}$ for some $i < j$.  We can therefore deduce from \eqref{apjeqb:eq} that we necessarily have \eqref{eqform}, 
where we can ignore choices of $p'_{1},\ldots,p'_{j-1}$ where $B(p'_{1},\ldots,p'_{j-1}) = 0$.  
By setting $\mathcal E'_{j}$ to be the equations from $\mathcal E_{j}$ obtained of the form \eqref{eqform} and noting that $|\mathcal E'| \le |\mathcal E|$, the lemma follows.
\end{proof}

\begin{proof}[Proof of Theorem~\ref{arbitrarynumprimes:thm}]
By Theorem~\ref{fnm:thm}, it suffice to show the number of choices for distinct odd primes $p'_{1},\ldots,p'_{s+t}$ satisfying one of the equations in $\mathcal E'$ in Lemma~\ref{maintsfnmthm}, with each $p'_{j} \le X$, is bounded by $O_{t,s}(\pi(X)^{t+s-1})$.  With equations of the form \eqref{eqform} for $2 \le j \le t+s$, we proceed as follows to bound the number of odd primes $p'_{j} \le X$ satisfying at least one of the equations in $\mathcal E'$.  We consider each $j_{0} \in \{ 2, \ldots, t+s \}$ in turn and restrict to those equations in \eqref{eqform} corresponding to $j = j_{0}$.  
We allow arbitrary distinct odd prime values for $p'_{i}$ with $1 \le i \le t+s$ and $i \ne j_{0}$ in $\binom{\pi(X)}{t+s-1}$ ways and then take $p'_{j_{0}}$ to be one of the solutions given by \eqref{eqform} with $j = j_{0}$ and with the denominator $B(p'_{1},\ldots,p'_{j-1}) \ne 0$.  Such a solution for $p'_{j_{0}}$ need not be an odd prime distinct from our choices for $p'_{i}$ with $1 \le i \le t+s$ and $i \ne j_{0}$ and it might not even be an integer, but this will only mean that using such a solution in our count gives us an overestimate.  Each equation in \eqref{eqform} then gives rise to at most $\binom{\pi(X)}{t+s-1}$ distinct odd prime choices for $p'_{1}, \ldots, p'_{t+s}$ satisfying at least one of the equations in $\mathcal E'$.  Given the bound on $|\mathcal E'|$, the proof of the theorem is complete.
\end{proof}

We turn now to establishing Theorem~\ref{maingeneralthm2}.
For $t$ and $s$ positive integers, let $\mathcal Q$ denote the pairs of odd positive integers $(n,m)$ with $n \le X$, $m \le X$, $nm$ squarefree, $\omega(n)= t$ and $\omega(m) = s$.
To establish Theorem~\ref{maingeneralthm2}, we show that for every $\varepsilon > 0$, there is a $\delta > 0$ such that if $t+s \le \delta \sqrt{\log\log X}$, then the non-cyclotomic part of $\Phi_{n}(x)+\Phi_{m}(x)$ is irreducible if $t \equiv s \pmod{2}$ and the non-cyclotomic part of $\Phi_{n}(x)\Phi_{m}(x)+1$ is irreducible if $t \not\equiv s \pmod{2}$ for all but at most $\varepsilon |\mathcal Q|$ pairs $(n,m)$ in $\mathcal Q$.  From Theorem~\ref{fnm:thm}, it suffices to show that if $t+s \le \delta \sqrt{\log\log X}$, then $F_{n,m}(x)$ is of the form \eqref{basicFeq} for all but at most $\varepsilon |\mathcal Q|$ pairs $(n,m)$ in $\mathcal Q$.  To accomplish this task, we need the order of magnitude of $|\mathcal Q|$.  
For this, one can take advantage of a theorem of A.~Selberg \cite{selberg1954} (also, see \cite[Section~7.4]{montgomeryvaughan} and \cite[Chapter~II.6]{tenenbaum}).  Estimates for $\pi_{t}(X)$, which is the number of squarefree numbers $n \le X$ consisting of $t$ (distinct) prime factors go back to Landau (cf.~\cite[Section~56]{landau}) and L.~G.~Sathe's work, beginning with \cite{sathe}, prior to Selberg's work helped develop interest in similar estimates.  These works specifically address estimates for $\pi_{t}(X)$ (see also \cite[Section~7.4.1,~Problem~4]{montgomeryvaughan}).  Borrowing from these, we have the following.

\begin{lemma}\label{landautypelem}
For every $\varepsilon > 0$, there is an $X_{0} = X_{0}(\varepsilon)$ and a $\delta = \delta(\varepsilon) > 0$ such that if $X \ge X_{0}(\varepsilon)$ and $t$ is a positive integer $< \delta \log\log X$, then the number $\pi_{t}(X)$ of squarefree numbers $n \le X$ for which $\omega(n) = t$ satisfies 
\[
(1-\varepsilon) \dfrac{X \,(\log\log X)^{t-1}}{(t-1)!\,\log X} < \pi_{t}(X) < (1+\varepsilon) \dfrac{X \,(\log\log X)^{t-1}}{(t-1)!\,\log X}.
\]
\end{lemma}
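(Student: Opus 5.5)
This lemma is a uniform version of the Landau--Sathe--Selberg asymptotic for $\pi_{t}(X)$, so I plan to deduce it from Selberg's theorem rather than argue from scratch. The statement I will use is the following. Uniformly for $1 \le t \le A\log\log X$, with $A$ fixed,
\[
\pi_{t}(X) = \dfrac{X\,(\log\log X)^{t-1}}{(t-1)!\,\log X}\left( G\!\left(\dfrac{t-1}{\log\log X}\right) + O_{A}\!\left(\dfrac{t}{(\log\log X)^{2}}\right)\right).
\]
Here $G(z) = \frac{1}{\Gamma(z+1)}\prod_{p}\bigl(1+\frac{z}{p}\bigr)\bigl(1-\frac{1}{p}\bigr)^{z}$ is the factor attached to squarefree integers with $\omega(n)=t$, which is the version in \cite[Section~7.4]{montgomeryvaughan} and \cite[Chapter~II.6]{tenenbaum}.

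The key steps are these.
\begin{enumerate}
\item Observe that $G$ is holomorphic, hence continuous, near $z=0$ with $G(0)=1$. Given $\varepsilon>0$, choose $\delta\in(0,1]$ such that $|G(z)-1|<\varepsilon/2$ for all real $z\in[0,\delta]$.
\item For $t<\delta\log\log X$, note that $z=(t-1)/\log\log X$ lies in $[0,\delta)$. The error term is then $O(t/(\log\log X)^{2}) = O(\delta/\log\log X)$, where the implied constant is uniform because $A=1$ suffices.
\item Choose $X_{0}(\varepsilon)$ so large that this error is below $\varepsilon/2$ for $X\ge X_{0}$.
\item Combine the two bounds: the bracket lies in $(1-\varepsilon,1+\varepsilon)$, which is exactly the claimed two-sided inequality.
\end{enumerate}

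The main obstacle is that the cited sources state Selberg's result either for all integers with $\omega(n)=t$ or for $\Omega(n)=t$, so I need the squarefree variant. If I cannot simply quote it, my first attempt is to run the Selberg--Delange method on the Dirichlet series $\sum \mu^{2}(n) z^{\omega(n)} n^{-s} = \prod_{p}(1+z p^{-s})$. This product equals $\zeta(s)^{z}$ times a factor holomorphic in $\real s > 1/2$ and uniformly bounded for $|z|\le 1$. That yields $\sum_{n\le X}\mu^{2}(n)z^{\omega(n)} = X(\log X)^{z-1}\bigl(G(z)+O(1/\log X)\bigr)$ uniformly for $|z|\le 1$.

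I would then extract the coefficient of $z^{t}$ by Cauchy's formula on the circle $|z| = r = (t-1)/\log\log X$, which is at most $\delta\le 1$. The saddle-point bound at this radius produces the main term with the factor $G(r)$. The error involves $G'$ on $|z|\le 1$ and is of relative size $O(t/(\log\log X)^{2})$, as in the standard treatment.

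For the very small values of $t$ (say $t=1$, where $\pi_{1}(X)=\pi(X)$), the conclusion is already given directly by the Prime Number Theorem, and those cases are consistent with the general argument above.
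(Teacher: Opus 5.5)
Your proposal is correct and follows essentially the paper's route. The paper gives no separate proof but simply borrows the uniform Landau--Sathe--Selberg estimate, pointing to Problem~4 of \cite[Section~7.4.1]{montgomeryvaughan} for the squarefree count (so your Selberg--Delange fallback is not really needed); reading off the $(1\pm\varepsilon)$ bounds from the continuity of $G$ at $0$ with $G(0)=1$ is exactly how that estimate yields the lemma.

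One minor slip in the fallback: the Selberg--Delange main term is $z\,G(z)\,X(\log X)^{z-1}$, not $G(z)\,X(\log X)^{z-1}$ (the coefficient of $z^{0}$ on the left is $1$, not of size $X/\log X$). It is this extra factor $z$ that makes the Cauchy extraction of the coefficient of $z^{t}$ produce $G\big((t-1)/\log\log X\big)(\log\log X)^{t-1}/(t-1)!$.
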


We are interested in a similar estimate for $\pi^{o}_{t}(X)$ which we use to denote the number of \textit{odd} squarefree $n \le X$ satisfying $\omega(n) = t$.  Since every even squarefree $n = 2m \le X$ with $\omega(n) = t$ is such that $m$ is squarefree and $\omega(m) = t-1$, we see that there are no more than $\pi_{t-1}(X/2)$ such $n$, which by 
Lemma~\ref{landautypelem}, is small for small $\delta$ compared to $\pi_{t}(X)$.  We deduce then from Lemma~\ref{landautypelem} that the same result holds with $\pi_{t}(X)$ replaced by $\pi^{o}_{t}(X)$.  More precisely, we have the following. 

\begin{lemma}\label{newlandautypelem}
For every $\varepsilon > 0$, there is an $X_{0} = X_{0}(\varepsilon)$ and a $\delta = \delta(\varepsilon) > 0$ such that if $X \ge X_{0}$ and $t$ is a positive integer $< \delta \log\log X$, then the number $\pi^{o}_{t}(X)$ of odd squarefree numbers $n \le X$ for which $\omega(n) = t$ satisfies 
\begin{equation}\label{landautypelem:eq1}
(1-\varepsilon) \dfrac{X (\log\log X)^{t-1}}{(t-1)!\,\log X} < \pi^{o}_{t}(X) < (1+\varepsilon) \dfrac{X (\log\log X)^{t-1}}{(t-1)!\,\log X}.
\end{equation}
\end{lemma}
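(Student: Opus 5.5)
The plan is to split $\pi_{t}(X) = \pi^{o}_{t}(X) + E_{t}(X)$, where $E_{t}(X)$ counts the even squarefree $n \le X$ with $\omega(n) = t$. We then show that $E_{t}(X)$ is at most a small multiple of the main term
\[
M_{t}(X) = \dfrac{X (\log\log X)^{t-1}}{(t-1)!\,\log X}.
\]
Lemma~\ref{landautypelem} will supply both the main term for $\pi_{t}(X)$ and the upper bound for $E_{t}(X)$. Given $\varepsilon > 0$, we apply Lemma~\ref{landautypelem} with $\varepsilon/2$ in place of $\varepsilon$, obtaining $X_{1}$ and $\delta_{1}$. The upper bound in \eqref{landautypelem:eq1} is immediate, since $\pi^{o}_{t}(X) \le \pi_{t}(X) < (1+\varepsilon/2) M_{t}(X)$. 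So the only work is the lower bound.

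For $t = 1$, the even squarefree $n$ with $\omega(n) = 1$ consist only of $n = 2$, so $E_{1}(X) = 1$, which is negligible against $M_{1}(X) = X/\log X$. For $t \ge 2$, each even squarefree $n$ counted has the form $n = 2m$ with $m \le X/2$ odd, squarefree and $\omega(m) = t-1$. Hence $E_{t}(X) \le \pi_{t-1}(X/2) \le \pi_{t-1}(X)$.

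We require $t - 1 < \delta_{1}\log\log X$, which holds since $t < \delta\log\log X$ with $\delta \le \delta_{1}$. Lemma~\ref{landautypelem} then gives
\[
E_{t}(X) < (1+\varepsilon/2)\dfrac{X (\log\log X)^{t-2}}{(t-2)!\,\log X} = (1+\varepsilon/2)\,\dfrac{t-1}{\log\log X}\, M_{t}(X).
\]
Because $t - 1 < \delta\log\log X$, this is at most $(1+\varepsilon/2)\,\delta\, M_{t}(X)$.

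Now choose $\delta = \min\{\delta_{1}, \varepsilon/4\}$. Then $(1+\varepsilon/2)\delta \le \varepsilon/2$ for $\varepsilon \le 1$, and larger $\varepsilon$ are trivial, since then the lower bound is nonpositive or can be handled by reducing to $\varepsilon = 1$. We obtain
\[
\pi^{o}_{t}(X) \ge \pi_{t}(X) - E_{t}(X) > (1-\varepsilon/2 - \varepsilon/2) M_{t}(X) = (1-\varepsilon) M_{t}(X).
\]
We take $X_{0}$ to be $X_{1}$, enlarged if necessary so that the $t = 1$ case and $\log\log X > 0$ hold.

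The only delicate point is the ratio between consecutive main terms $M_{t-1}(X)/M_{t}(X) = (t-1)/\log\log X$. This ratio is small precisely because of the hypothesis $t < \delta\log\log X$, and that is why the same $\delta$-type constraint suffices. We should also check that applying Lemma~\ref{landautypelem} at $t-1$ is legitimate: we need $t - 1 \ge 1$, which is why $t = 1$ is treated separately, and $t - 1 < \delta_{1}\log\log X$, which we have already verified.
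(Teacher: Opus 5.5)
Your proposal is correct and takes essentially the same route as the paper: write each even $n$ as $2m$ with $\omega(m)=t-1$, bound these by $\pi_{t-1}(X/2)$, and use Lemma~\ref{landautypelem} to see that this is at most a factor $(t-1)/\log\log X<\delta$ times the main term. You also supply details the paper leaves implicit, namely the $\varepsilon/2$ bookkeeping, the separate case $t=1$, and bounding $\pi_{t-1}(X/2)\le \pi_{t-1}(X)$ so that the lemma is applied at $X$ itself.
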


We extend the above lemma and its argument to a more general situation as follows.

\begin{lemma}\label{newlandautypelem2}
Fix $\varepsilon > 0$.  Let $X_{0} = X_{0}(\varepsilon)$ be sufficiently large and $\delta = \delta(\varepsilon) > 0$ be sufficiently small.
Let $t$ be a positive integer $< \delta \sqrt{\log\log X}$, and let $\mathcal P = \mathcal P(X) \subset [1,X]$ be a finite subset of the primes for which $|\mathcal P| \le \exp(\exp(\sqrt{\log\log X}\,))$ where $X \ge X_{0}$.
Then the number $\pi^{\mathcal P}_{t}(x)$ of squarefree numbers $n \le X$ for which $n$ has no prime factors in $\mathcal P$ and $\omega(n) = t$ satisfies 
\begin{equation}\label{landautypelem:eq2}
(1-\varepsilon) \dfrac{X (\log\log X)^{t-1}}{(t-1)!\,\log X} < \pi^{\mathcal P}_{t}(X) < (1+\varepsilon) \dfrac{X (\log\log X)^{t-1}}{(t-1)!\,\log X}.
\end{equation}
\end{lemma}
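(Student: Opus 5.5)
We compare $\pi^{\mathcal P}_{t}(X)$ with $\pi_{t}(X)$ from Lemma~\ref{landautypelem}, following the argument that gave Lemma~\ref{newlandautypelem}. Clearly $\pi^{\mathcal P}_{t}(X) \le \pi_{t}(X)$, so the upper bound in \eqref{landautypelem:eq2} follows at once from Lemma~\ref{landautypelem}, since $t < \delta\sqrt{\log\log X} < \delta\log\log X$. For the lower bound we bound the number of squarefree $n \le X$ with $\omega(n) = t$ that are divisible by some prime $r \in \mathcal P$, and show this is at most $\varepsilon' X(\log\log X)^{t-1}/((t-1)!\log X)$ for small $\varepsilon'$.

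Write such an $n$ as $n = rn'$, where $n'$ is squarefree with $\omega(n') = t-1$ and $n' \le X/r$. The number of exceptional $n$ is then at most $\sum_{r \in \mathcal P} \pi_{t-1}(X/r)$, with the convention $\pi_{0}(Y) \le 1$. The difficulty is that $X/r$ can be small, so Lemma~\ref{landautypelem} does not apply uniformly. We therefore split the primes $r \in \mathcal P$ at a threshold $Y_0 = \exp(\exp(2\sqrt{\log\log X}))$, chosen so that $Y_0$ exceeds every element of a worst-case $\mathcal P$ only if the primes of $\mathcal P$ are small.

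\emph{Large primes $r > Y_0$.} Bound crudely: the count of $n \le X$ divisible by $r$ is at most $X/r$. Summing over at most $|\mathcal P| \le \exp(\exp(\sqrt{\log\log X}))$ primes, each exceeding $Y_0$, gives at most $|\mathcal P|\,X/Y_0 \le X\exp(-\exp(\sqrt{\log\log X}))$. This is negligible against $X(\log\log X)^{t-1}/((t-1)!\log X) \ge X/\log X$.

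\emph{Small primes $r \le Y_0$.} Here $\log(X/r) \ge \log X - \exp(2\sqrt{\log\log X})$, so $\log\log(X/r) = \log\log X + o(1)$. Hence Lemma~\ref{landautypelem} applies to $\pi_{t-1}(X/r)$ for $t-1 \ge 1$ (the case $t=1$ is trivial, giving at most $|\mathcal P| = o(X/\log X)$ exceptions). It gives
\[
\pi_{t-1}(X/r) \le (1+\varepsilon)\frac{X(\log\log X)^{t-2}}{r\,(t-2)!\,\log X}.
\]
Summing over $r \le Y_0$ and using Mertens, $\sum_{r \le Y_0} 1/r \le \log\log Y_0 + O(1) = 2\sqrt{\log\log X} + O(1)$. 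Compared with the main term, this contribution carries the factor
\[
\frac{(t-1)\cdot 2\sqrt{\log\log X}}{\log\log X} \le 2\delta + o(1),
\]
since $t < \delta\sqrt{\log\log X}$. This is precisely why the hypothesis uses $\sqrt{\log\log X}$.

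Combining the two ranges, the exceptional count is at most $(3\delta + o(1))$ times the main term. Taking $\delta$ small and $X_0$ large, and applying the lower bound of Lemma~\ref{landautypelem} with $\varepsilon/2$, yields \eqref{landautypelem:eq2}.

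The main obstacle is the bookkeeping in the small-prime range. There one must check that Lemma~\ref{landautypelem} applies uniformly at $X/r$, which means verifying $X/r \ge X_0$ and $t-1 < \delta\log\log(X/r)$, and that replacing $\log(X/r)$ and $\log\log(X/r)$ by $\log X$ and $\log\log X$ costs only a factor $1+o(1)$, including after raising to the power $t-2$. The latter holds because $(1+O(1/\log\log X))^{t} = 1+o(1)$ for $t = o(\log\log X)$.
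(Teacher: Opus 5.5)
Your proof is correct and follows the paper's argument: bound the exceptional $n$ by $\sum_{r\in\mathcal P}\pi_{t-1}(X/r)$, apply Lemma~\ref{landautypelem} at $X/r$, and use Mertens to get $\sum 1/r = O(\sqrt{\log\log X})$, which against the factor $(t-1)/\log\log X$ gives an $O(\delta)$ proportion of the main term. The only difference is how possibly large primes in $\mathcal P$ are handled. The paper replaces $\mathcal P$ by the primes up to $2|\mathcal P|\log|\mathcal P|$, implicitly using that $\pi_{t-1}(X/p)$ decreases in $p$. Your split at $Y_0$, with the trivial bound $X/r$ above it, works equally well; also, the power $t-2$ needs no care, since $\log\log(X/r)\le\log\log X$.
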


\begin{proof}
Since $\pi^{\mathcal P}_{t}(X) \le \pi_{t}(X)$, the upper bound follows from Lemma~\ref{landautypelem}.  
Furthermore, if $\mathcal P'$ is a subset of $\mathcal P$, then $\pi^{\mathcal P'}_{t}(X) \ge \pi^{\mathcal P}_{t}(X)$, so we need only consider $|\mathcal P|$ large (near the upper bound on $|\mathcal P|$ given in the lemma).  We do so.
Observe that $\exp(\exp(\sqrt{\log\log X}\,))$ is small compared to $X/\log X$. 
If $t =1$, then 
\[
\pi^{\mathcal P}_{t}(X) \sim \pi(X) \sim X/\log X,
\]
and \eqref{landautypelem:eq2} follows.  We suppose then that $t \ge 2$.  
The number of primes 
$\le 2\,|\mathcal P|\,\log|\mathcal P|$ is at least $|\mathcal P|$.  Hence, by Lemma~\ref{landautypelem}, the number of squarefree numbers $n \le X$ for which $n$ has a prime factor in $\mathcal P$ and $\omega(n) = t$ is bounded by
\[
\sum_{p \in \mathcal P} \pi_{t-1}\bigg( \dfrac{X}{p} \bigg) 
\le \sum_{p \le 2|\mathcal P|\log|\mathcal P|} \pi_{t-1}\bigg( \dfrac{X}{p} \bigg)
\le \dfrac{1+\varepsilon}{(t-2)!} \sum_{p \le 2|\mathcal P|\log|\mathcal P|} \dfrac{X (\log\log X)^{t-2}}{p \log(X/p)}.
\]
The upper bound on $|\mathcal P|$ implies that for $p \le 2|\mathcal P|\log|\mathcal P|$ we have
\begin{align*}
\log (X/p) 
&\ge \log X - 2\exp\big(\sqrt{\log\log X}\,\big) \\[5pt]
&= \log X \,\bigg( 1 - \dfrac{2\exp\big(\sqrt{\log\log X}\,\big)}{\log X} \bigg) = (1+o(1)) \log X,
\end{align*}
where, as defined in the introduction, $o(1)$ indicates a function that tends to $0$ as $X$ goes to infinity.
Also, we see that
\[
\sum_{p \le 2|\mathcal P|\log|\mathcal P|} \dfrac{1}{p} \le 2 \log\log |\mathcal P| \le 2 \sqrt{\log\log X}
= \dfrac{2 \delta \log\log X}{\delta \sqrt{\log\log X}} \le \dfrac{2 \delta \log\log X}{t-1}.
\]
With $X_{0}$ large enough and $\delta$ small enough, we obtain that
\[
\sum_{p \in \mathcal P} \pi_{t-1}\bigg( \dfrac{X}{p} \bigg) 
\le \dfrac{\varepsilon}{2} \cdot \dfrac{X (\log\log X)^{t-1}}{(t-1)!\,\log X}.
\]
Taking $\varepsilon$ in Lemma~\ref{landautypelem} to be $\varepsilon/2$, we deduce the lemma.
\end{proof}

The bound on $|\mathcal P|$ in Lemma~\ref{newlandautypelem2} is much more than we will need.  We only require $|\mathcal P| < \delta \sqrt{\log\log X}$.  Our interest is in the following.

\begin{lemma}\label{Qsizelemma}
For $t$ and $s$ positive integers, let $\mathcal Q$ denote the pairs of odd positive integers $(n,m)$ with $n \le X$, $m \le X$, $nm$ squarefree, $\omega(n)= t$ and $\omega(m) = s$.  If $t+s = o\big( \sqrt{\log\log X} \,\big)$, then
\[
|\mathcal Q| \sim \dfrac{X^{2} (\log\log X)^{t+s-2}}{(t-1)! (s-1)!\,\log^{2} X}.
\]
\end{lemma}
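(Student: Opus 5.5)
We count $\mathcal Q$ by fixing $n$ first and then counting the admissible $m$. Write
\[
|\mathcal Q| = \sum_{n} \#\{ m \le X : m \text{ odd squarefree}, \ \omega(m) = s, \ \gcd(m,n) = 1 \},
\]
where $n$ runs over the odd squarefree $n \le X$ with $\omega(n) = t$. Since $nm$ squarefree with both odd is the same as both odd, squarefree and coprime, the inner count is exactly $\pi^{\mathcal P}_{s}(X)$ with $\mathcal P = \mathcal P_{n}$ the set of primes dividing $2n$.

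For each such $n$ we have $|\mathcal P_{n}| = t+1 = o(\sqrt{\log\log X})$. This is far below the threshold $\exp(\exp(\sqrt{\log\log X}))$ allowed in Lemma~\ref{newlandautypelem2}, and $s < \delta\sqrt{\log\log X}$ holds for $X$ large. Fix $\varepsilon > 0$. Lemma~\ref{newlandautypelem2}, applied uniformly to all these $\mathcal P_{n}$, gives
\[
(1-\varepsilon)\,\frac{X(\log\log X)^{s-1}}{(s-1)!\,\log X} \;<\; \pi^{\mathcal P_{n}}_{s}(X) \;<\; (1+\varepsilon)\,\frac{X(\log\log X)^{s-1}}{(s-1)!\,\log X}
\]
for every relevant $n$, once $X \ge X_{0}(\varepsilon)$.

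The point to check is uniformity. The constants $X_{0}$ and $\delta$ in Lemma~\ref{newlandautypelem2} depend only on $\varepsilon$, not on the particular set $\mathcal P$, which is exactly what lets the bound hold simultaneously for all $n$. The hypothesis $t+s = o(\sqrt{\log\log X})$ guarantees both $s$ and $|\mathcal P_{n}|$ fall in the admissible range for every fixed $\delta$ once $X$ is large.

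Summing over $n$ gives $\pi^{o}_{t}(X)$ times a factor between $(1\pm\varepsilon)$ times the main term above. Lemma~\ref{newlandautypelem} (valid since $t < \delta\log\log X$) gives $\pi^{o}_{t}(X)$ within a factor $1\pm\varepsilon$ of $X(\log\log X)^{t-1}/((t-1)!\log X)$. Multiplying these estimates shows $|\mathcal Q|$ lies within a factor $(1\pm\varepsilon)^{2}$ of
\[
\frac{X^{2}(\log\log X)^{t+s-2}}{(t-1)!\,(s-1)!\,\log^{2}X}.
\]
Letting $\varepsilon \to 0$ gives the claimed asymptotic.
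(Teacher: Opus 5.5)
Your proof is correct and follows the paper's argument, with the roles of $n$ and $m$ interchanged: the paper fixes $m$ (counted by Lemma~\ref{newlandautypelem}) and then counts $n$ via Lemma~\ref{newlandautypelem2} with $\mathcal P = \mathcal P(m) \cup \{2\}$. Your remark that $X_{0}$ and $\delta$ depend only on $\varepsilon$, not on $\mathcal P$, spells out the uniformity that the paper uses without stating it.
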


\begin{proof}
By Lemma~\ref{newlandautypelem}, the number of ways we can choose odd $m$ with $m \le X$, $m$ squarefree and $\omega(m) = s$ is
\[
\pi_{s}^{o}(X) \sim \dfrac{X (\log\log X)^{s-1}}{(s-1)! \log X}.
\]
For each such choice for $m$, we take $\mathcal P(m)$ to be the $s$ prime divisors of $m$ and set $\mathcal P = \mathcal P(m) \cup \{ 2 \}$.   Then the inequality on $|\mathcal P|$ in Lemma~\ref{newlandautypelem2} is satisfied.  Therefore, we deduce from Lemma~\ref{newlandautypelem2} that for each chosen $m$, there are 
\[
\pi^{\mathcal P}_{t}(X) \sim \dfrac{X (\log\log X)^{t-1}}{(t-1)! \log X}
\]
choices for $n$ odd with $n \le X$, $nm$ squarefree and $\omega(n) = t$.  The lemma follows.
\end{proof}

\begin{proof}[Proof of Theorem~\ref{maingeneralthm2}]
From Theorem~\ref{fnm:thm}, we want to show that for every $\varepsilon > 0$, there is a $\delta > 0$ such that if $t+s \le \delta \sqrt{\log\log X}$, then $F_{n,m}(x)$ is of the form \eqref{basicFeq} for all but at most $\varepsilon |\mathcal Q|$ pairs $(n,m)$ in $\mathcal Q$, where we are taking $X$ to be sufficiently large.  We consider $\delta < 1/2$.  We make use of Lemma~\ref{maintsfnmthm} to estimate the number of $(n,m)$ in $\mathcal Q$ for which $F_{n,m}(x)$ is not of the form \eqref{basicFeq}.  We use the notation in Lemma~\ref{maintsfnmthm} but order the primes so that
\[
p'_{s} > p'_{s-1} > \cdots > p'_{1}
\qquad \text{and} \qquad
p'_{t+s} > p'_{t+s-1} > \cdots > p'_{s+1}.
\]  

We start with the primes dividing $m$.
For each $j \in \{ 2, 3, \ldots, s \}$, we bound the number of pairs $(n,m) \in \mathcal Q$ for which $p'_{j}$ satisfies an equation in $\mathcal E'_{j}$ by choosing $n \le X$ odd and squarefree with $\omega(n) = t$ in $\pi^{o}_{t}(X)$ ways and then counting the number of ways of choosing $m = p'_{j}m' \le X$ odd and squarefree with $\omega(m') = s-1$ and $p'_{j}$ satisfying an equation in $\mathcal E'_{j}$.  We do not mind overcounting so we do not need concern ourselves with the coprimality of $p'_{j}$, $m'$ and $n$ and whether $p'_{j}$ is between the $(j-1)^{\rm st}$ and $j^{\rm th}$ prime factors of $m'$.  

Note that $2 \le j \le s$, so $s \ge 2$ and $t \ge 1$.  We see then that
\[
|\mathcal E'_{j}| \le |\mathcal E'| \le 2^{2^{t} + 2^{s}-1} + 2^{2^{t} + 1} + 2^{2^{s} + 1}
\le 2^{2^{t+s} -3} + 2^{2^{t+s} - 5} + 2^{2^{t+s} - 3} \le 2^{2^{t+s}}.
\]
Set $D = 2^{2^{\delta\sqrt{\log\log X}+1}}$.  Observe that
\[
\log D < 2^{\delta\sqrt{\log\log X}+1} = o(\log X)
\]
as $X \rightarrow \infty$.
We consider $m' \le X/D$ and $m' > X/D$ separately.   For $m' \le X/D$, we count the number of pairs $(p'_{j},m')$, where $p'_{j}$ satisfies an equation in $\mathcal E'_{j}$.   
For each of the $\pi^{o}_{s-1}(X/D)$ choices of $m' \le X/D$, there are at most $|\mathcal E'_{j}|$ primes $p'_{j}$ satisfying an equation in $\mathcal E'_{j}$.  Thus, the total number of such pairs $(p'_{j},m')$ is bounded by
\begin{align*}
\pi^{o}_{s-1}(X/D) \cdot 2^{2^{t+s}} &\le \dfrac{X (\log\log (X/D))^{s-2} \cdot 2^{2^{\delta\sqrt{\log\log X}}}}{(s-2)!\,\log (X/D)\,2^{2^{\delta\sqrt{\log\log X}+1}}} \\[5pt]
&\le \dfrac{2X (\log\log X)^{s-2}}{(s-2)!\,(\log X)\,2^{2^{\delta\sqrt{\log\log X}}}} \\[5pt]
&\le \dfrac{X (\log\log X)^{s-2}}{(s-1)!\,\log X}.
\end{align*}

In the case that $m' > X/D$, observe that $p'_{j} \le D$.  In this case, we do not concern ourselves with whether $p'_{j}$ satisfies an equation in $\mathcal E'_{j}$ but instead take advantage of the fact that $p'_{1} \cdots p'_{j} \le D^{j}$, which is small compared to $X$.  It suffices to bound the number of positive odd squarefree integer pairs $(m'_{1},m'_{2})$ where $m'_{1} \le D^{j}$, $\omega(m'_{1}) = j$, $m'_{2} \le X/m'_{1}$, and $\omega(m'_{2}) = s-j$, which is again an overcount of what is truly needed.  
For each of the $\pi^{o}_{j}(D^{j})$ choices for $m'_{1}$, we see that there are at most $\pi^{o}_{s-j}(X/m'_{1})$ choices for $m'_{2}$.  Here, the interpretation in the case $j = s$ of $\pi^{o}_{s-j}(X/m'_{1})$ is $\pi^{o}_{0}(X/m'_{1}) = 1$ as necessarily $m'_{2} = 1$ in this case.  
For $2 \le j \le s-1$, we can bound the number of pairs $(m'_{1},m'_{2})$ by
\[
\sum_{\substack{m'_{1} \le D^{j} \\ m'_{1} \text{ odd squarefree} \\ \omega(m'_{1}) = j}} \pi^{o}_{s-j}(X/m'_{1})
\le \sum_{\substack{m'_{1} \le D^{j} \\ m'_{1} \text{ odd squarefree} \\ \omega(m'_{1}) = j}} \dfrac{2X (\log\log X)^{s-j-1}}{(s-j-1)!\,m'_{1} \,\log X},
\]
where we are using that $X$ is large and
\[
\log m'_{1} \le j \log D \le \delta\sqrt{\log\log X} \,2^{\delta\sqrt{\log\log X}+1} = o(\log X).
\]
For an appropriate large $U_{0}$ and $X$ sufficiently large, we obtain
\begin{align*}
\sum_{\substack{m'_{1} \le D^{j} \\ m'_{1} \text{ odd squarefree} \\ \omega(m'_{1}) = j}} \dfrac{1}{m'_{1}}
&\le U_{0} + \int_{U_{0}}^{D^{j}} \dfrac{1}{u} \,d\,\pi^{o}_{j}(u) \\[5pt]
&\le U_{0}+1 + 2\int_{U_{0}}^{D^{j}} \dfrac{(\log\log u)^{j-1}}{(j-1)!\,u \,\log u} \,du \\[5pt]
&\le \dfrac{3\big(\log\log (D^{j})\big)^{j}}{j!}.
\end{align*}
We can bound the numerator by using
\[
\log\log (D^{j}) = (\log\log D) \bigg( 1+\dfrac{\log j}{\log \log D} \bigg) \le 2 \log\log D.
\]
Now, since $s \le \sqrt{\log \log X}$ and $\log \log D \le 2\delta \sqrt{\log \log X}$, we deduce that
\begin{align*}
\sum_{\substack{m'_{1} \le D^{j} \\ m'_{1} \text{ odd squarefree} \\ \omega(m'_{1}) = j}} \pi^{o}_{s-j}(X/m'_{1})
&\le \dfrac{6 X (\log\log X)^{s-j-1} (2 \log \log D)^{j}}{(s-j-1)!\,j!\,\log X} \\
&\le \dfrac{6\cdot 2^{j} X (\log\log X)^{s-j-1} (2 \delta \sqrt{\log \log X}\,)^{j}}{(s-j-1)!\,j!\,\log X} \\[5pt]
&\le \dfrac{6\delta \cdot 4^{j} X (\log\log X)^{s-1}}{(\log \log X)^{j/2} (s-j-1)!\,j!\,\log X} \\[5pt]
&\le \dfrac{6\delta \cdot 4^{j} X (\log\log X)^{s-1}}{s^{j} (s-j-1)!\,j!\,\log X} \\[5pt]
&\le \dfrac{6\delta \cdot 4^{j} X (\log\log X)^{s-1}}{j! \,(s-1)!\,\log X}.
\end{align*}
For $j = s$, we claim the same bound on the number of pairs $(m'_{1},m'_{2})$ is satisfied.  In this case, $m'_{2} = 1$ and $m'_{1} \le D^{s}$, so it suffices to show
\begin{equation}\label{jequalsscase}
D^{s} \le \dfrac{6\delta \cdot 4^{s} X (\log\log X)^{s-1}}{s! \,(s-1)!\,\log X}.
\end{equation}
Since $\delta < 1/2$ and $X$ is large, we have $s < \sqrt{\log\log X}$ and
\[
\log D < 2^{\sqrt{\log\log X}} < \exp\bigg( \dfrac{\log\log X}{\sqrt{\log\log X}} \bigg) = (\log X)^{1/\sqrt{\log\log X}}.
\]  
Hence,
\[
\log (s^{2s} D^{s}) = 2s \log s + s \log D = o(\log X).
\]
Now, after multiplying both sides of \eqref{jequalsscase} by $s! \,(s-1)! \le s^{2s}$ and taking logarithms, one can see that \eqref{jequalsscase} holds.

Combining the above bounds, we see that for $t+s \le \delta \sqrt{\log\log X}$, the number of pairs $(n,m)$ in $\mathcal Q$ for which $F_{n,m}(x)$ is not of the form \eqref{basicFeq} because one of the equations in $\mathcal E'_{2} \cup \mathcal E'_{3} \cup \cdots \cup \mathcal E'_{s}$ holds is bounded by 
\[
\sum_{j=2}^{s} \dfrac{X (\log\log X)^{s-2}}{(s-1)!\,\log X} + \sum_{j=2}^{s} \dfrac{6\delta \cdot 4^{j} X (\log\log X)^{s-1}}{j! \,(s-1)!\,\log X}.
\]
From
\[
s \le \sqrt{\log\log X} 
\qquad \text{and} \qquad
\sum_{j=0}^{\infty} \dfrac{4^{j}}{j!} = e^{4},
\]
this bound is in turn at most
\[
\dfrac{X (\log\log X)^{s-2} \sqrt{\log\log X}}{(s-1)!\,\log X} + \dfrac{6 e^{4} \delta X (\log\log X)^{s-1}}{(s-1)!\,\log X}
< \dfrac{\varepsilon}{4} \cdot \dfrac{X (\log\log X)^{s-1}}{(s-1)!\,\log X},
\]
for $\delta < \varepsilon/(25 e^{4})$ and $X$ sufficiently large depending on $\varepsilon$.
Taking into account the $\pi^{o}_{t}(X)$ choices for $n \le X$, we obtain from \eqref{landautypelem:eq1} that the number of pairs $(n,m) \in \mathcal Q$ for which one of the equations in $\mathcal E'_{2} \cup \mathcal E'_{3} \cup \cdots \cup \mathcal E'_{s}$ holds is bounded by
\[
\dfrac{\varepsilon}{3} \cdot \dfrac{X^{2} (\log\log X)^{t+s-2}}{(t-1)!\,(s-1)!\,\log^{2} X}
\]
provided $\varepsilon \le 1/3$, which we may assume is the case.

For equations in $\mathcal E'_{j}$ with $s+1 \le j \le s+t$ involving the prime divisors of $n$, we want to use a similar analysis.  We choose $m \le X$ odd and squarefree with $\omega(m) = s$ in $\pi^{o}_{s}(X)$ ways and then count the number of ways of choosing $n = p'_{j}n' \le X$ odd and squarefree with $\omega(n') = t-1$ and $p'_{j}$ satisfying an equation in $\mathcal E'_{j}$.  Before we had $s \ge 2$, and using the analogous argument as above, with $D$ the same and with $t \ge 2$, the number of pairs $(p'_{j},n')$ with $n' \le X/D$ is bounded by
\[
\pi^{o}_{t-1}(X/D) \cdot 2^{2^{t+s}} \le \dfrac{X (\log\log X)^{t-2}}{(t-1)!\,\log X}.
\]
In the case that $t = 1$, we again have the interpretation $\pi^{o}_{0}(X/D) = 1$ (that is, one must have $n' = 1$).  In this case, as $2^{2^{t+s}}$ is small compared to say $\sqrt{X}$, the inequality above still holds.
Also, for $n' > X/D$, we can write $n = n'_{1}n'_{2}$ where $n'_{1} \le D^{j-s}$, $\omega(n'_{1}) = j-s$, $n'_{2} \le X/n'_{1}$, and $\omega(n'_{2}) = t-j+s$.  The number of such pairs $(n'_{1},n'_{2})$ is, by a similar argument to that given for $m' > X/D$, bounded by
\begin{align*}
\sum_{\substack{n'_{1} \le D^{j-s} \\ n'_{1} \text{ odd squarefree} \\ \omega(n'_{1}) = j-s}} \pi_{t-j+s}(X/n'_{1})
&\le \sum_{\substack{n'_{1} \le D^{j-s} \\ n'_{1} \text{ odd squarefree} \\ \omega(n'_{1}) = j-s}} \dfrac{2 X (\log\log X)^{t-j+s-1}}{(t-j+s-1)!\,n'_{1}\,\log X} \\[5pt]
&\le \dfrac{6\delta \cdot 4^{j-s} X (\log\log X)^{t-1}}{(j-s)! \,(t-1)!\,\log X},
\end{align*}
where the case $j = t+s$ is handled differently as the case $j = s$ was handled before.  Summing over $j \in [s+1,s+t]$ and taking into account the $\pi^{o}_{s}(X)$ choices for $m \le X$, we obtain from \eqref{landautypelem:eq1} that the number of pairs $(n,m) \in \mathcal Q$ for which one of the equations in $\mathcal E'_{s+1} \cup \mathcal E'_{s+2} \cup \cdots \cup \mathcal E'_{s+t}$ holds is bounded by
\[
\dfrac{\varepsilon}{3} \cdot \dfrac{X^{2} (\log\log X)^{t+s-2}}{(t-1)!\,(s-1)!\,\log^{2} X}.
\]
The theorem now follows from Lemma~\ref{Qsizelemma}.
\end{proof}

\section{Concluding remarks}

Our main goal in this paper was to highlight the new Theorem~\ref{factorizationthm} and its implications on the factorization of $\Phi_{n}(x)+\Phi_{m}(x)$ and $\Phi_{n}(x)\,\Phi_{m}(x)+1$.  We also aimed to bring attention to the nice 26-year-old conjecture of Charles Nicol from \cite{nicol}.  There are certainly other directions that we could have explored related to Theorem~\ref{factorizationthm}.  For example, similar to the analysis of $\Phi_{\ell p}(x) + \Phi_{\ell q}(x)$ given in the introduction, one can see that if $\ell$ and $q$ are primes and $p_{1}, \ldots, p_{q-1}$ and $p_{q}$ are distinct primes $> \ell$, then Theorem~\ref{factorizationthm} implies that the \textit{non-reciprocal} part of
\begin{equation}\label{sumofmorepj}
\Phi_{\ell p_{1}}(x) + \cdots + \Phi_{\ell p_{q}}(x)
\end{equation}
is irreducible.  Also, the analogous result holds if $\ell = 1$ or if $\ell$ in \eqref{sumofmorepj} is replaced by $k = \ell^{t}$ for some $t \ge 2$.

\subsection*{Acknowledgments}
The authors express their gratitude to Pieter Moree for encouraging the collaboration of the authors on this topic and introducing Conjecture~\ref{nicolconj} to the second author.  As noted in Section~\ref{sect2}, the authors also acknowledge and express their appreciation to Alexander Borisov for simplifying their original argument for Theorem~\ref{factorizationthm}.


\begin{thebibliography}{10}

\bibitem{berlekamp}
E.~R.~Berlekamp, Algebraic coding theory, revised 1984 edition, 
Aegean Park Press, Laguna Hills, CA, 1984.

\bibitem{elliott}
P.~D.~T.~A.~Elliott, Probabilistic number theory I, Grundlehren der Mathematischen Wissenschaften, 239, Springer, New York-Berlin, 1979.

\bibitem{fil1999} 
M.~Filaseta,
\textit{On the factorization of polynomials with small {E}uclidean norm},
In Number theory in progress, Vol. 1 (Zakopane-Ko\'scielisko, 1997), pages 143--163, de Gruyter, Berlin, 1999.
  
\bibitem{filsol1999}
M.~Filaseta and I.~Solan,
\textit{An extension of a theorem of Ljunggren},
Math.~Scand.~84 (1999), 5--10.  

\bibitem{hungerford}
T.~W.~Hungerford, Algebra, Holt, Rinehart and Winston, Inc., New York-Montreal, Que.-London, 1974.

\bibitem{landau}
E.~Landau, Handbuch der Lehre von der Verteilung der Primzahlen, Chelsea, New York, 1953.

\bibitem{ljunggren}
W.~Ljunggren, \textit{On the irreducibility of certain trinomials and quadrinomials}, Math.~Scand.~8 (1960), 65--70.

\bibitem{montgomeryvaughan}
H.~L. Montgomery and R.~C. Vaughan, Multiplicative number theory. I. Classical theory, Cambridge Studies in Advanced Mathematics, 97, Cambridge Univ. Press, Cambridge, 2007.

\bibitem{nicol}
C.~Nicol, Google Groups, August 25, 2000, \url{https://groups.google.com/g/sci.math.research/c/H4UJSXZNJac?pli=1}.

\bibitem{sathe}
L.~G.~Sathe, \textit{On a problem of Hardy on the distribution of integers having a given number of prime factors, I}, J.~Indian Math.~Soc.~(N.S.) 17 (1953), 63--82.

\bibitem{selberg1954} 
A.~Selberg, \textit{Note on a paper by L.~G.~Sathe}, J. Indian Math. Soc. (N.S.) 18 (1954), 83--87.

\bibitem{sheilsmall}
T.~B.~Sheil-Small, Complex polynomials, Cambridge Studies in Advanced Mathematics, 75, Cambridge Univ. Press, Cambridge, 2002.

\bibitem{tenenbaum}
G.~Tenenbaum, Introduction to analytic and probabilistic number theory, third edition Translated from the 2008 French edition by Patrick D.~F.~Ion,  Graduate Studies in Mathematics, 163, Amer. Math. Soc., Providence, RI, 2015.
  
\end{thebibliography}
\end{document}